\newtheorem{prop}{Proposition}[section]
\newtheorem{define}{Definition}[section]
\newtheorem{lemma}[prop]{Lemma}
\newtheorem{theo}{Theorem}[section]
\newtheorem{coro}{Corollary}[section]
\numberwithin{equation}{section}
\theoremstyle{remark}
\newtheorem{rmq}[prop]{Remark}
\newcommand{\mL}{\mathcal{L}}
\newcommand{\C}{\mathbb{C}}
\newcommand{\R}{\mathbb{R}}
\newcommand{\N}{\mathbb{N}}
\newcommand{\Z}{\mathbb{Z}}
\title{Global Strichartz estimates for the Schr\"odinger equation 
with non zero boundary conditions and applications}
\author{Corentin Audiard
\footnote{Sorbonne Universit\'es, UPMC Univ Paris 06, UMR 7598, Laboratoire Jacques-Louis Lions, 
F-75005, Paris, France }
\footnote{CNRS, UMR 7598, Laboratoire Jacques-Louis Lions, F-75005, Paris, France}}
\begin{document}
\maketitle

\begin{abstract}
We consider the Schr\"odinger equation on a half space in any dimension with a class of nonhomogeneous boundary 
conditions including Dirichlet, Neuman and the so-called transparent boundary conditions. 
Building upon recent 
local in time Strichartz estimates (for Dirichlet boundary conditions), we obtain global 
Strichartz estimates for initial data in $H^s,\ 0\leq s\leq 2$ and boundary data in a natural 
space $\mathcal{H}^s$. For $s\geq 1/2$, the issue of compatibility conditions requires a thorough analysis 
of the $\mathcal{H}^s$ space. As an application we solve nonlinear Schr\"odinger equations and
construct global asymptotically linear solutions for small data. 
A discussion is included on the appropriate notion of scattering in this framework, and the optimality of the 
$\mathcal{H}^s$ space.
\end{abstract}
\begin{abstract}
On consid\`ere l'\'equation de Schr\"odinger sur le demi espace en dimension arbitraire pour une classe 
de conditions au bord non homog\`enes, incluant les conditions de Dirichlet, Neumann, et ``transparentes''.
Le principal r\'esultat consiste en des estimations de Strichartz globales pour des donn\'ees initiales 
$H^s$, $0\leq s\leq 2$ et des donn\'ees au bord dans un espace naturel $\mathcal{H}^s$, il 
am\'eliore les estim\'ees de Strichartz locales en temps obtenues r\'ecemment par 
d'autres auteurs dans le cas des conditions de Dirichlet. Pour $s\geq 1/2$, la d\'efinition des 
conditions de compatibilit\'e requiert une \'etude pr\'ecise des espaces $\mathcal{H}^s$. En application, on r\'esout 
des \'equations de Schr\"odinger non lin\'eaires, et on construit des solutions dispersives globales si les donn\'ees 
sont petites. On discute \'egalement le sens pr\'ecis donn\'e \`a ``solution dispersive'', ainsi que la question 
de l'optimalit\'e de l'espace $\mathcal{H}^s$.
\end{abstract}

\tableofcontents
\noindent
\section{Introduction}
We consider the initial boundary value problem (IBVP) for the Schr\"odinger equation on a half space
\begin{equation}\label{IBVP}
 \left\{
 \begin{array}{ll}
  i\partial_tu+\Delta u=f,\\
  u|_{t=0}=u_0,\\
  B(u|_{y=0},\partial_{y}u|_{y=0})=g,
 \end{array}
\right.
(x,y,t)\in \R^{d-1}\times \R^+\times \R_t^+,
\end{equation}
where the notation $\R_t$ emphasizes the time variable. $B$ is defined as follows: 
we denote $\mathcal{L}$ the Fourier-Laplace transform on $\R^{d-1}\times \R_t^+$
\begin{equation*}
g\to \mathcal{L}g(\xi,\tau):=\int_0^\infty \int_{\R^{d-1}}e^{-\tau t-ix\xi}g(x,t)dxdt,\ (\xi,\tau)\in \R^{d-1}\times 
\{z\in \C:\ \text{Re}(z)\geq 0\},
\end{equation*}
and $B$ satisfies 
\begin{eqnarray*}
\mathcal{L}(B(a,b))=b_1(\xi,\tau)\mathcal{L}(a)+b_2(\xi,\tau)\mathcal{L}(b),
\text{with }b_1,b_2 \text{ smooth on $\text{Re}(\tau)>0$ and }\\
\forall\,\lambda>0,\ b_1(\lambda\xi,\lambda^2\tau)=b_1(\xi,\tau),\
b_2(\lambda\xi,\lambda^2\tau)=\lambda^{-1}b_2(\xi,\tau).
\end{eqnarray*}
This kind of boundary conditions was considered by the author \cite{Audiard2} for a large class 
of dispersive equations on the half space. They are natural considering the homogeneity of the equation, they include  
Dirichlet ($b_1=1,\ b_2=0$) and Neuman boundary conditions 
($b_1=0,\ b_2=(|\xi|^2-i\tau)^{-1/2}$, see section \ref{linestim} for 
the choice of the square root), but 
also the important case of \emph{transparent} boundary conditions 
($b_1=1,\ b_2=-(|\xi|^2-i\tau)^{-1/2}$). The label transparent comes from the fact 
that the solution of the homogeneous IBVP with transparent boundary conditions coincides on $y\geq 0$ with the 
solution of the Cauchy problem that has for initial value the function $u_0$ extended by $0$ for $y\leq 0$ 
(see \cite{Besse4}).
\\
Our aim here is to prove the well-posedness of the IBVP under natural assumptions on $B$ detailed in section 
\ref{linestim}, and prove that the solutions satisfy so-called Strichartz estimates.\\
Let us recall that the linear, pure Cauchy problem on $\R^d$ can be solved by elementary semi-group arguments, and 
Strichartz estimates are a key tool for the 
the analysis of nonlinear Schr\"odinger equations (NLS) (see the reference book \cite{Cazenave}). They can be seen as 
a consequence of the dispersion estimate $\|e^{it\Delta}u_0\|_{L^\infty}
\lesssim \|u_0\|_{L^1}/t^{d/2}$, and they read for $p>2$
\begin{equation}\label{strichartzCauchy}
\text{ for }p>2,\ \frac{2}{p}+\frac{d}{q}=\frac{d}{2},\ \|e^{it\Delta}u_0\|_{L^p(\R_t,L^q(\R^d)}\lesssim 
\|u_0\|_{L^2(\R^d)} .
\end{equation}
Any pair $(p,q)$ that satisfies the identity above is called admissible. 
In the limit case $p^*=2, q^*=2d/(d-2)$, in view of the critical Sobolev embedding $H^1\hookrightarrow L^{q*}$ 
such estimates correspond (scaling wise) to a gain of one derivative. 
It is easily seen that \eqref{strichartzCauchy} remains true if $\R_t$ is replaced by $[0,T]$, and by H\"older's inequality, 
the estimate is true on $[0,T]$ for $q\geq 2,\ 2/p+d/q\geq d/2$. For such indices it is usually called 
a Strichartz estimate with ``loss of derivatives''. \\
The study of the IBVP is significantly more difficult even for homogeneous Dirichlet boundary conditions: 
to our knowledge the existence of dispersion 
estimates is still an open problem, and it is now well understood that Strichartz
estimates strongly depend on the geometry of the domain. 
One of the first breakthroughs for the analysis of the homogeneous BVP was due to Burq, G\'erard and Tzvetkov 
\cite{BGT}, who proved that if the domain is \emph{non trapping}\footnote{A typical example is the exterior of a compact 
star shaped domain.} and $\Delta_D$ is the Dirichlet Laplacian
\begin{equation*}
\text{for }p\geq 2,\ \frac{1}{p}+\frac{d}{q}=\frac{d}{2},\ \|e^{it\Delta_D}u_0\|_{L^pL^q} \lesssim \|u_0\|_{L^2},
\end{equation*}
this corresponds to Strichartz estimates with loss of $1/2$ derivative.
Numerous improvements have been obtained since \cite{Anton2}\cite{BSS}, up to Strichartz estimates without 
loss of derivatives \cite{Ivanoviciconv}\cite{BSS}, and their usual consequences for semilinear problems. 
Very recently, Killip, Visan and Zhang \cite{KVZ} shrinked even more the gap by proving the global 
well-posedness of the quintic defocusing Schr\"odinger equation in dimension 3, while the same result 
for the Cauchy problem (see \cite{CKSTT}, 2008) is a recent (and spectacular) achievement.\\
Less results are available for nonhomogeneous boundary value problems. Actually, even in the simplest settings of 
a half space the two following fundamental questions have not received completely satisfying answers yet
\begin{enumerate}
\item Given smooth boundary data, what algebraic condition should satisfy $B$ for the BVP to be well-posed ?
\item For such $B$, given $s\geq 0$ what is the optimal regularity of 
the boundary data to ensure $u\in C_tH^s$?
\end{enumerate}
In dimension one, with Dirichlet boundary conditions, question $2$ is now well understood : 
for a solution $u \in C_tH^s(\R^+)$, the natural  space for the boundary 
data is $H^{s/2+1+4}(\R_t^+)$, see the work of  Holmer \cite{Holmer} and Bona, Sun and Zhang 
\cite{BSZ}, which includes interesting discussions on the optimality of $H^{s/2+1/4}(\R_t^+)$ . 
An easy way to understand this regularity assumption is that it is precisely the regularity of the trace of solutions 
of the Cauchy problem, as can be seen of the celebrated sharp Kato smoothing. Let us recall here the classical 
argument of \cite{KPV}
\begin{eqnarray*}
e^{it\Delta}u_0=\int_{\R}e^{-it|\xi|^2}e^{ix\xi}\widehat{u_0}d\xi
=\frac{1}{2\pi}\int_{\R^+}e^{-it\eta}\big(e^{ix\sqrt{\eta}}\widehat{u_0}
+e^{i-x\sqrt{\eta}}\widehat{u_0}\big)d\xi\\
\Rightarrow \|e^{it\Delta}u_0|_{x=0}\|_{\dot{H}^{s/2+1/4}}
\sim \int_{\R^+}(|\widehat{u_0}(\sqrt {\eta})|^2+|\widehat{u_0}(-\sqrt {\eta})|^2)
|\eta|^{s+1/2}d\eta 
\sim \int_{\R}|\widehat{u_0}(\xi)|^2  |\xi|^{2s}d\xi\\
\leq\|u_0\|_{H^s}^2.
\end{eqnarray*}
Moreover \cite{Holmer}, \cite{BSZ} derived Strichartz estimates  without loss of derivatives, so that local 
well-posedness can be deduced for various nonlinear problems. Global well-posednes 
results are also available in \cite{BSZ} for slightly smoother boundary data, precisely $H^{s/2+1/2}(\R_t^+)$. \\
The BVP in dimension $\geq 2$ is significantly more difficult, because the geometry can be more complex, and 
waves propagating along the boundary are harder to control (this issue appears even with the trivial geometry of 
the half space). We expect that the answer to question $2$ strongly depends on the domain. Due to its role for control 
problems, the Schr\"odinger equation in bounded domain has received significant attention, see 
\cite{Lasiecka2,Ozsari,Rosier} and references therein.  In unbounded domains with non trivial geometry, the regularity 
of the boundary data is different and Strichartz estimates with loss can be derived (see the author's 
contribution \cite{Audiard6}).\\
Let us focus now on the case where the domain is the half space. The Schr\"odinger equation shares some (limited) 
similarities with hyperbolic equations, for which question 1 has 
been clarified in the seminal work of Kreiss \cite{Kreiss}: there is a purely algebraic condition, the so-called 
Kreiss-Lopatinskii condition, which leads to Hadamard type instability if it is violated (see the book 
\cite{Benzoni3} section 4 and references therein). This condition was extended by the author in 
\cite{Audiard2} for a class of linear dispersive equations posed on the half space. 
A consequence of the main result was that if this condition is satisfied then \eqref{IBVP} is well posed 
in $C_tH^s$ for boundary data in $L^2(\R_t,H^{s+1/2}(\R^{d-1}))\cap 
H^{s/2+1/4}(\R_t,L^2)$ (a space that, scaling wise, is a natural higher 
dimensional version of $H^{s/2+1/4}(\R_t)$). We point out however that our (uniform) Kreiss-Lopatinskii 
condition derived was quite restrictive, and in particular forbid the Neuman boundary condition, a limitation which
is lifted here.
\\
On the issue of Strichartz estimates, Y.Ran, S.M.Sun and B.Y.Zhang considered in \cite{RSZ} the 
IBVP \eqref{IBVP} on a half space with nonhomogeneous Dirichlet boundary conditions. They derived  
explicit solution formulas in the spirit of their work on the Korteweg de Vries 
equation with J.Bona \cite{BSZ}, and managed to use them to obtain local in time 
Strichartz estimates without loss of derivatives. A very interesting feature was that the existence of solutions 
in $C_TH^s$ only required boundary data in some space $\mathcal{H}^s$ which has the same scaling as 
$L^2_tH^{s+1/2}\cap H^{s/2+1/4}_tL^2$ but is slightly weaker. We refer to paragraph \ref{secHs} 
for a precise definition of $\mathcal{H}^s$. 
The space $\mathcal{H}^s$ is in some way optimal, as it is exactly the 
space where traces of solutions of the Cauchy problem belong, see proposition \ref{estimcauchy}. Note however that in 
the appendix we provide a construction showing that it is less accurate for evanescent waves (solutions 
that exist only for BVPs and remain localized near the boundary).\\
Although not stated explicitly in \cite{RSZ}, we might roughly summarize 
their linear results as follows: 
\begin{theo}[\cite{RSZ}]\label{theoRSZ}
For $s\geq 0$, $s\notequiv 1/2 [2\Z]$, $(u_0,f,g)\in H^s(\R^{d- 1}\times \R^+)\times 
L^{1}([0,T],H^s)\times \mathcal{H}^s([0,T])$. If $(u_0,f,g)$ satisfy appropriate compatiblity conditions, 
the IBVP \eqref{IBVP} with Dirichlet boundary conditions 
has a unique solution $u\in C([0,T],H^s)$, moreover for any $(p,q)$
such that $p>2,\ \frac{2}{p}+\frac{d}{q}=\frac{d}{2}$ and $T>0$
it satisfies the a priori estimate 
\begin{equation*}
\|u\|_{L^p([0,T],W^{s,q})}\lesssim \|u_0\|_{H^s}+
\|f\|_{L^1([0,T],H^s)}+\|g\|_{\mathcal{H}^s([0,T])}. 
\end{equation*}
\end{theo}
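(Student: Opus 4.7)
The plan is to split $u=v+w$, where $v$ solves the pure Cauchy problem on $\R^d$ with $u_0$ and $f$ extended across $y=0$ via a bounded extension operator $H^s(\R^{d-1}\times\R^+)\to H^s(\R^d)$, and $w$ solves the IBVP with zero initial data, zero forcing, and Dirichlet datum $\tilde g:=g-v|_{y=0}$. The Strichartz bound on $v$ is precisely \eqref{strichartzCauchy}. By the sharp Kato smoothing stated as proposition \ref{estimcauchy}, the trace $v|_{y=0}$ lies in $\mathcal{H}^s$ with norm controlled by the data norms of $u_0$ and $f$, so $\tilde g\in\mathcal{H}^s$ and it suffices to estimate $w$ for boundary data in $\mathcal{H}^s$ with vanishing initial data.

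For $w$ I would rely on the explicit Fourier--Laplace representation. Applying $\mL$ reduces the equation to $\partial_y^2 \mL w=(|\xi|^2-i\tau)\mL w$ with Dirichlet trace $\mL\tilde g$, whose unique $L^2_y(\R^+)$ solution is
\begin{equation*}
\mL w(\xi,y,\tau)=e^{-\sqrt{|\xi|^2-i\tau}\,y}\,\mL\tilde g(\xi,\tau),
\end{equation*}
the square root being chosen with positive real part. Passing to $\mathrm{Re}\,\tau\to 0^+$ and splitting the imaginary axis into an elliptic regime $|\mathrm{Im}\,\tau|\lesssim|\xi|^2$ and a hyperbolic regime $|\mathrm{Im}\,\tau|\gg|\xi|^2$, the elliptic piece is handled directly: the exponential decay of the kernel in $y$ combined with Young-type convolution bounds suffices. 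In the hyperbolic regime $\sqrt{|\xi|^2-i\tau}$ is nearly purely imaginary and $w$ behaves as a genuine Schr\"odinger wave; after the change of variables $\tau=-i\eta$, $\eta\mapsto\eta-|\xi|^2$, $w$ can be rewritten as a Cauchy-type propagator acting on a datum whose $H^s(\R^d)$ norm is, by the very definition of $\mathcal{H}^s$ (paragraph \ref{secHs}), equivalent to the restriction of $\|g\|_{\mathcal{H}^s}$ to that regime. The Cauchy Strichartz estimate \eqref{strichartzCauchy} then transfers verbatim.

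The principal obstacle is the coupling between $g$ and $u_0$ at $t=0$. To run the Fourier--Laplace argument one must extend $\tilde g$ by zero to $t\leq 0$ without losing $\mathcal{H}^s$ regularity, which forces compatibility relations of the form $\partial_t^k g|_{t=0}=(i\Delta)^k u_0|_{y=0}$ for as many $k$ as $s>2k+1/2$ allows, read off the equation iteratively. Proving that these algebraic conditions are exactly equivalent to extendability in $\mathcal{H}^s(\R_t)$ requires the dedicated trace theory of $\mathcal{H}^s$ developed in paragraph \ref{secHs}; the excluded thresholds $s\equiv 1/2\,[2\Z]$ are precisely those where a new trace becomes marginally defined and the compatibility condition degenerates. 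Once this is in place the preceding estimates apply to $\tilde g$ on all of $\R_t$, and uniqueness follows from an energy estimate on the adjoint IBVP.
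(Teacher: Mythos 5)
Your high-level plan coincides with the paper's: decompose $u = v + w$, control $v$ by Cauchy Strichartz and the Kato-smoothing trace bound (Proposition \ref{estimcauchy}), and control $w$ via the explicit Fourier--Laplace formula split into hyperbolic and elliptic frequency regions, with the hyperbolic piece reducing to a free evolution $e^{it\Delta}\phi$ whose $L^2$ datum carries the hyperbolic contribution to $\|g\|_{\mathcal{H}^s}$. The genuine gap is in your elliptic piece. You assert that the exponential decay in $y$ together with Young-type convolution bounds suffices, but the $y$-decay and the Laplace-transform $L^2$-boundedness (Lemma \ref{planchdupauvre}) control at best $L^\infty_tL^2_{x,y}$, which gives no Strichartz gain. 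The $L^p_tL^q$ estimate with $q>2$ must exploit the time oscillation $e^{it(\eta^2-|\xi|^2)}$, and this piece is in fact the hard part of the whole proof. The paper runs a $TT^*$ scheme: it computes the kernel $N_{t,s}$ of $\text{Op}(K_t)\text{Op}(K_t)^*$ (Lemma \ref{idNts}), proves a dispersive $L^{p'}\to L^p$ bound $\lesssim|t-s|^{-d(1/2-1/p)}$ by factoring into the free Schr\"odinger propagator in $x$ and a Van der Corput estimate on the $\eta$-integral, interpolated against the $L^2\to L^2$ endpoint from Lemma \ref{planchdupauvre}, and then closes with Hardy--Littlewood--Sobolev. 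Without an analogous dispersive mechanism your elliptic estimate does not follow.

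Two secondary issues. The trace of the Duhamel term $\int_0^te^{i(t-s)\Delta}f\,ds$ is not handled by sharp Kato smoothing alone, because the Christ--Kiselev lemma is unavailable for $\mathcal{H}^s$, which measures time and space regularity simultaneously (Remark \ref{rmqTimereg}); the paper uses a dedicated duality argument (Proposition \ref{estimDuham}), and for $s\geq 1/2$ the truncation $1_{s<t}$ is genuinely problematic. And the passage from $s\in\{0,2\}$ to fractional $s$ requires the interpolation identities for $\mathcal{H}^s_0$ and $\mathcal{H}^{1/2}_{00}$ (Proposition \ref{interpHs0}), which your outline omits. Finally, the elliptic/hyperbolic split should be by the sign of $\delta+|\xi|^2$ rather than by the relative sizes of $|\delta|$ and $|\xi|^2$: for $\delta>0$ one is always in the elliptic region irrespective of $|\xi|$.
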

\noindent Our two main improvements are that 
we allow more general boundary conditions, and our Strichartz estimates are \emph{global} in time with a larger 
range of integrability indices for $f$ (any dual admissible pair).\\
For the full IBVP the smoothness 
of solutions does not only depend on the smoothness of the data, but also on some \emph{compatibility conditions}, the 
simplest one being $u_0|_{y=0}=g|_{t=0}$ in the case of Dirichlet boundary conditions.
%
This compatibility condition 
is trivially satisfied if $u_0|_{y=0}=g|_{t=0}=0$ (that is, $u_0\in H^1_0$), but the non trivial case is mathematically 
relevant and important for nonlinear problems. It is delicate 
to describe compatibility conditions for a general boundary operator $B$, therefore we shall
split the analysis in the following two simpler problems :
\begin{itemize}
 \item General boundary conditions, ``trivial'' compatibility conditions in theorem \ref{thtrivial},
 \item Dirichlet boundary conditions, general compatibility conditions in theorem \ref{maintheo}.
\end{itemize}
As $\mathcal{H}^s$ is not embedded into continuous functions, even $g|_{t=0}$ does not have an immediate meaning. 
Therefore we thoroughly study the functional spaces $\mathcal{H}^s$ in paragraph \ref{secHs}, including trace properties 
which allow us to rigorously define the compatibility conditions, including the intricate case $s=1/2$ where $g|_{t=0}$ 
has no sense, but a global compatiblity condition is required. The main new consequence for nonlinear 
problems is a scattering result in $H^1$ for $(u_0,g)$ small in $H^1\times \mathcal{H}^1$.
All previous global well-posedness results require more smoothness on $g$.
\paragraph{Statement of the main results}
Let us begin with a word on the compatiblity condition $u_0|_{y=0}=g$: if 
$u_0\in H^s(\R^{d-1}\times \R^+)$, $s>1/2$, $u_0|_{y=0}$ is 
well defined and belongs to $H^{s-1/2}(\R^{d-1})$, moreover it is proved in 
proposition \ref{structureH} that if $g\in \mathcal{H}^s$ then $g|_{t=0}\in 
H^{s-1/2}(\R^{d-1})$, therefore if $u\in C_tH^s$ solves \eqref{IBVP}, necessarily 
\begin{equation}\label{compa}
\text{for }s>1/2,\ g|_{t=0}=u_0|_{y=0}.
\end{equation}
\eqref{compa} is the first order compatibility condition. 
If $s=1/2$, \eqref{compa} does not makes sense, but a subtler condition is required: let $\Delta'$ the laplacian on 
$\R^{d-1}$, then
\begin{equation}\label{compaglob}
\text{if }s=1/2,\ \int_{\R^{d-1}}\int_0^\infty \frac{|e^{-it^2\Delta'}g(x,t^2)-u_0(x,t)|^2}{t}dtdx<\infty.
\end{equation}
This is reminiscent of the famous Lions-Magenes global compatibility condition for traces on domains with corners, 
with a twist due to the Schr\"odinger evolution, see definition \eqref{def00} and paragraph \ref{mainproof} for 
more details.
When we say "the compatibility condition is satisfied",
we implicitly mean the strongest compatiblity condition that makes sense so that for $s<1/2$ nothing is required.
It is not difficult to define recursively higher order compatibility conditions (see e.g. \cite{Audiard6} section 2). 
Note however that higher order compatibility conditions involve also the trace $f|_{y=t=0}$, which makes sense only if 
$f$ has some time regularity. We do not treat this issue in this paper.\\
For nonlinear applications we are only interested by the $H^1$ regularity, so we choose
to consider indices of regularity $s\in [0,2]$. 
Our main result requires a few notions : see section \ref{funcanalysis} for the 
definition of the functional spaces $\mathcal{H}^s$, $\mathcal{H}^s_0$ and $\mathcal{H}^{1/2}_{00}$
and section \ref{linestim} for the definition of the 
Kreiss-Lopatinskii condition.
\\
We use the following definition of solution:
\begin{define}\label{defsol}
A function $u\in C(\R_t^+,L^2)$ is a solution of \eqref{IBVP} if there exists a  sequence 
$(u_0^n,f^n,g^n)\in H^2(\R^{d-1}\times \R^+)\times L^p(\R_t^+,W^{2,q})\times (L^2(\R^+_t,H^2)\cap H^1(\R^+_t,L^2))$, 
with 
\begin{equation*}
\|(u_0,f,g)-(u_0^n,f^n,g^n)\|_{L^2\times L^{p'_1}_tL^{q'_1}\times \mathcal{H}^0}\longrightarrow_n 0,
\end{equation*}
such that there exists a solution $u^n\in C_tH^2\cap C^1_tL^2$ to the corresponding IBVP and $u_n$ converges 
to $u$ in $C_tL^2$.
A $C_tH^s$ solution is a solution in the $C_tL^2$ sense with additional regularity.
\end{define}
In our statements we shall use the following convention for any $(p,q)\in [1,\infty]^2$
\begin{equation}
B^0_{q,2}(\R^{d-1}\times \R^+):=L^q,\ B^2_{q,2}(\R^{d-1}\times \R^+):=W^{2,q},\ B^0_{p,2}(\R_t^+)=L^p,\ 
B^1_{p,2}(\R_t^+)=W^{1,p}.
\end{equation}
These equalities are \emph{not true} for the usual definition of Besov spaces, but they allow us 
to give shorter statements for a regularity parameter $s\in [0,2]$. 
\begin{theo}\label{thtrivial}
If $B$ satisfies the Kreiss-Lopatinskii condition \eqref{UKL}, for $s\in [0,2]$, $(p_1,q_1)$ an admissible pair, 
\begin{equation*}
(u_0,f,g)\in H^s_0(\R^{d- 1}\times \R^+)\times \big(L^{p'_1}(\R_t^+,B^s_{q'_1,2})\cap 
B^s_{p'_1,2}(\R_t^+,L^{q'_1})\big)\times \mathcal{H}^s_0(\R^+),
\end{equation*}
(if $s=1/2$, $(u_0,g)\in H^{1/2}_{00}\times \mathcal{H}^{1/2}_{00}$), then 
the IBVP \eqref{IBVP} has a unique solution $u\in C(\R^+,H^s)$, 
moreover for any $(p,q)$ such that $p>2,\ \frac{2}{p}+\frac{d}{q}=\frac{d}{2}$,
it satisfies the a priori estimate 
\begin{equation*}
\|u\|_{L^p(\R_t^+,B^s_{q,2})\cap B^{s/2}_{p,2}(\R_t^+,L^q)}\lesssim \|u_0\|_{H^s}+
\|f\|_{L^{p'_1}(\R_t^+,B^s_{q'_1,2})\cap B^s_{p'_1,2}(\R_t^+,L^{q'_1})}+
\|g\|_{\mathcal{H}^s(\R_t^+)}. 
\end{equation*}
Moreover, solutions are causal, in the sense that if $(u_i)_{i=1,2}$ are solutions corresponding to initial data 
$(u_{0,i},f_i,g_i)$, such that $u_{0,1}=u_{0,2},\ f_1|_{[0,T]}=f_2|_{[0,T]}, g_1|_{[0,T]}=g_2|_{[0,T]}$, then 
$u_1|_{[0,T]}=u_2|_{[0,T]}$.
\end{theo}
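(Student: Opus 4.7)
The plan is to split \eqref{IBVP} into a Cauchy part and a residual, pure boundary part. Since $(u_0,f)\in H^s_0\times L^{p'_1}B^s_{q'_1,2}$ has trivial compatibility, one can extend $u_0$ by zero across $\{y=0\}$ to obtain $\widetilde{u_0}\in H^s(\R^d)$ (the definition of $H^{1/2}_{00}$ in section \ref{secHs} is exactly what is needed to make this extension continuous at $s=1/2$), and similarly extend $f$. Let $v$ be the resulting Cauchy solution on $\R^d$. The standard Strichartz estimate \eqref{strichartzCauchy}, lifted to the scale $L^pB^s_{q,2}\cap B^{s/2}_{p,2}L^q$ by real interpolation between $s=0,2$, controls $v$ by the right-hand side; by the sharp Kato smoothing of proposition \ref{estimcauchy}, the trace $B(v|_{y=0},\partial_y v|_{y=0})$ lies in $\mathcal{H}^s$. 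Then $w=u-v|_{y\geq 0}$ must solve \eqref{IBVP} with zero initial data, zero forcing, and residual boundary data $\widetilde g := g - B(v|_{y=0},\partial_y v|_{y=0})$, which belongs to $\mathcal{H}^s_0$ (respectively $\mathcal{H}^{1/2}_{00}$ at the endpoint) as soon as one checks that the Cauchy trace inherits the same vanishing/global compatibility as $g$.

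For the reduced pure boundary problem, I would use the Fourier-Laplace representation in the spirit of \cite{Audiard2,RSZ}: the decaying solution reads
\begin{equation*}
\mathcal{L}w(\xi,\tau,y) = \frac{\mathcal{L}\widetilde g(\xi,\tau)}{b_1(\xi,\tau) + b_2(\xi,\tau)\mu(\xi,\tau)}\, e^{\mu(\xi,\tau)y},
\end{equation*}
where $\mu$ is the root of $\mu^2 = |\xi|^2 - i\tau$ with $\mathrm{Re}(\mu)<0$. The uniform Kreiss-Lopatinskii condition \eqref{UKL} is precisely the statement that the denominator is bounded from below on $\mathrm{Re}(\tau)\geq 0$, so the formula defines a bounded Fourier-Laplace multiplier acting on $\mathcal{H}^s$. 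Splitting $\mu$ into its real (evanescent) and imaginary (oscillatory) parts reduces the estimate on the oscillatory piece to a standard Strichartz bound for half-line plane waves, while the evanescent piece is handled by pointwise Hardy-Sobolev arguments. Because everything is written globally in $\tau$, the resulting Strichartz estimate is global in time, bypassing the localization inherent to \cite{RSZ}.

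The regularity scale is closed by real interpolation between $s=0$ (the pure $L^2$ energy/Strichartz estimate, directly from the representation above) and $s=2$ (obtained by applying $\Delta$ to the equation: the compatibility $u_0|_{y=0}=g|_{t=0}=0$ hard-wired into $H^2_0\times\mathcal{H}^2_0$ makes $\Delta u$ a legitimate $L^2$ quantity at $t=0$). For $s\in(0,2)$, $s\neq 1/2$, the convention $B^0_{q,2}=L^q$, $B^2_{q,2}=W^{2,q}$ and the analogous convention in time let the interpolates be read off directly. The endpoint $s=1/2$ needs the spaces $H^{1/2}_{00}\times\mathcal{H}^{1/2}_{00}$ precisely so that smooth compactly supported compatible data are dense and the Lions-Magenes condition \eqref{compaglob} holds automatically; on such data the construction above goes through, and the $s=0$ estimate lets it pass to the limit in the sense of definition \ref{defsol}. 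Uniqueness follows from applying the $s=0$ estimate to the difference of two solutions, and causality from the fact that on $[0,T]$ the Cauchy correction $v$ depends only on $(u_0,f|_{[0,T]})$ while the Fourier-Laplace resolvent for $w$ is causal in $t$.

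The main obstacle I anticipate is the interaction between the general operator $B$ and the endpoint $s=1/2$. Unlike the Dirichlet case of \cite{RSZ}, there is no explicit closed-form inversion of $(b_1,b_2)$, so one must extract the required $\mathcal{H}^s$-boundedness of $1/(b_1+b_2\mu)$ solely from \eqref{UKL}, uniformly down to $\mathrm{Re}(\tau)=0$ and over the full range $s\in[0,2]$. Parallel to this, verifying that $B(v|_{y=0},\partial_y v|_{y=0})$ lands in $\mathcal{H}^{1/2}_{00}$ rather than merely $\mathcal{H}^{1/2}$ requires the trace analysis developed in section \ref{secHs}, and this is what ultimately certifies that the splitting $u=v+w$ respects the global compatibility and permits the endpoint result.
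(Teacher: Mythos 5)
Your decomposition $u=v|_{y\geq0}+w$ with Cauchy part controlled by Strichartz plus Kato-smoothing trace estimates (propositions \ref{estimcauchy}, \ref{estimDuham}) and the residual pure BVP solved via the Fourier-Laplace representation under \eqref{UKL} is precisely the paper's strategy, and your interpolation between $s=0$ and $s=2$ matches the paper's proof of proposition \ref{WPBVP} and of theorem \ref{thtrivial}. The only deviation is your density-plus-limit treatment of $s=1/2$: the paper obtains the endpoint directly from the interpolation identity $[\mathcal{H}_0,\mathcal{H}^2_0]_{1/4,2}=\mathcal{H}^{1/2}_{00}$ of proposition \ref{interpHs0}, which is cleaner and is in fact what your argument would ultimately also rely on (the $s=0$ estimate alone only gives $C_tL^2$ convergence, not the $C_tH^{1/2}$ regularity and the $s=1/2$ Strichartz bound).
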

\noindent 
Note that we have the usual range of indices for the integrability of $f$ but some time regularity is required. 
Such requirements are common for hyperbolic BVP (e.g. \cite{Met2} proposition 4.3.1), 
and the regularity required here is sharp in term of scaling, so that we are able to deduce the 
usual nonlinear well-posedness results from our linear estimates in section \ref{WPnonlin}. 
For the Dirichlet BVP, well-posedness with non trivial compatibility conditions 
holds:
\begin{theo}\label{maintheo}
In the case of Dirichlet boundary conditions, for $s\in [0,2]$, $(p_1,q_1)$ an admissible pair, 
$$(u_0,f,g)\in H^s(\R^{d- 1}\times \R^+)\times \big(L^{p'_1}(\R_t^+,B^s_{q'_1,2})\cap B^s_{p'_1,2}(\R_t^+,L^{q'_1})\big)
\times \mathcal{H}^s(\R^+_t),$$ 
that satisfy the compatiblity condition, then \eqref{IBVP} has a unique solution $u\in C(\R^+,H^s)$, 
moreover for any $(p,q)$ such that $p>2,\ \frac{2}{p}+\frac{d}{q}=\frac{d}{2}$
it satisfies the a priori estimate 
\begin{equation*}
\|u\|_{L^p(\R_t^+,B^s_{q,2})\cap B^{s/2}_{p,2}(\R_t^+,L^q)}\lesssim \|u_0\|_{H^s}+
\|f\|_{L^{p'_1}(\R_t^+,B^s_{q'_1,2})\cap B^s_{p'_1,2}(\R_t^+,L^{q'_1})}+
\|g\|_{\mathcal{H}^s(\R_t^+)}. 
\end{equation*}
\end{theo}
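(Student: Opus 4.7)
}
The plan is to reduce Theorem \ref{maintheo} to Theorem \ref{thtrivial} by constructing a lifting that absorbs the non-trivial traces of $(u_0,g)$. Concretely, extend $u_0$ to a function $\widetilde{u}_0\in H^s(\R^d)$ by any bounded extension operator (Stein's universal extension), and set $v:=e^{it\Delta}\widetilde{u}_0$ on $\R^d\times\R^+_t$. By the classical Strichartz estimates on $\R^d$, $v$ belongs to $L^p(\R_t^+,B^s_{q,2})\cap B^{s/2}_{p,2}(\R_t^+,L^q)$ for every admissible $(p,q)$, and by Proposition \ref{estimcauchy} its trace satisfies $v|_{y=0}\in\mathcal{H}^s(\R_t^+)$ with the natural norm control $\|v|_{y=0}\|_{\mathcal{H}^s}\lesssim \|\widetilde{u}_0\|_{H^s}\lesssim \|u_0\|_{H^s}$.

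Now set $w:=u-v|_{y>0}$. Formally $w$ solves the IBVP with data $(0,f,g-v|_{y=0})$. The whole point of the lifting is that this new data satisfies the \emph{trivial} compatibility condition, so that Theorem \ref{thtrivial} applies. For $s\in[0,1/2)$ there is nothing to check, and $g-v|_{y=0}\in\mathcal{H}^s=\mathcal{H}^s_0$. For $s\in(1/2,2]$, the trace proposition \ref{structureH} makes sense of $(g-v|_{y=0})|_{t=0}$ in $H^{s-1/2}(\R^{d-1})$, and the first-order compatibility condition \eqref{compa} on $(u_0,g)$ combined with $v|_{t=0,y=0}=u_0|_{y=0}$ gives $(g-v|_{y=0})|_{t=0}=0$, i.e.\ $g-v|_{y=0}\in\mathcal{H}^s_0$. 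Applying Theorem \ref{thtrivial} produces $w\in C(\R^+,H^s)$ satisfying the Strichartz estimates, and adding back $v$ (which itself satisfies such estimates globally) yields the conclusion for $u$. Uniqueness is inherited from the trivial case by applying the same subtraction argument to a putative difference.

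The delicate step is the borderline case $s=1/2$, since the trace $g|_{t=0}$ is not well defined in $H^0$ and one must instead land in $\mathcal{H}^{1/2}_{00}$. The characterization of $\mathcal{H}^{1/2}_{00}$ established in Section \ref{secHs} involves, after the parabolic change of variable $t\mapsto t^2$ (which appears because the Schr\"odinger phase makes the natural norm anisotropic of weight $1/2$ in time), an $H^{1/2}_{00}$-type Lions--Magenes condition on the corner $\{y=0,t=0\}$. Here the global compatibility condition \eqref{compaglob} is precisely engineered so that
\begin{equation*}
\int_{\R^{d-1}}\int_0^\infty \frac{|e^{-it^2\Delta'}(g-v|_{y=0})(x,t^2)|^2}{t}\,dt\,dx<\infty,
\end{equation*}
since $e^{-it^2\Delta'}v|_{y=0}(x,t^2)\to \widetilde{u}_0(x,0,0)=u_0(x,0)$ in the appropriate integral sense. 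Verifying this, and thereby obtaining $g-v|_{y=0}\in \mathcal{H}^{1/2}_{00}$, is the main obstacle; it rests entirely on the fine analysis of $\mathcal{H}^{1/2}_{00}$ carried out in Section \ref{secHs} and on the trace identity for Cauchy solutions established in Proposition \ref{estimcauchy}.

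Once the lifting lands in the correct subspace for every $s\in[0,2]$, the a priori bound follows by combining the global Strichartz estimate for $v$ on $\R^d$ with the estimate for $w$ from Theorem \ref{thtrivial}, giving the full inequality with the right-hand side $\|u_0\|_{H^s}+\|f\|_{\cdots}+\|g\|_{\mathcal{H}^s}$. The definition of solution in Definition \ref{defsol} is stable under this decomposition, so approximation of the rough data by smooth data carries through without extra work.
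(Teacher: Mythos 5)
Your overall plan is the paper's: lift away a Cauchy solution $v$, observe that the compatibility condition makes the residual boundary datum vanish at $t=0$, and invoke the trivial-compatibility theory on the remainder $w$. The only organisational difference is that the paper folds the Duhamel term $\int_0^t e^{i(t-s)\Delta}f\,ds$ into $v$ so that $w$ solves the \emph{pure} BVP (Proposition~\ref{WPBVP}), while you leave $f$ for Theorem~\ref{thtrivial}. Both bookkeepings are fine. For $s\neq1/2$ the argument you give is complete.

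The case $s=1/2$, however, is not actually proved — and you cannot defer it the way you do. You write that the conclusion ``rests entirely on the fine analysis of $\mathcal{H}^{1/2}_{00}$ carried out in Section~\ref{secHs} and on the trace identity for Cauchy solutions established in Proposition~\ref{estimcauchy},'' and you assert that $e^{-it^2\Delta'}v|_{y=0}(x,t^2)\to u_0(x,0)$ ``in the appropriate integral sense.'' Section~\ref{secHs} only supplies the \emph{characterization} of $\mathcal{H}^{1/2}_{00}$ via condition~\eqref{condition00}, and Proposition~\ref{estimcauchy} only yields $v|_{y=0}\in\mathcal{H}^{1/2}(\R_t)$. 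Neither of those, nor their combination, gives the required fact that the residual boundary datum satisfies the singular-weight integrability \eqref{condition00}. After changing variables $t\to\sqrt t$ in \eqref{compaglob}, what remains to be shown is precisely
\begin{equation*}
\iint_{\R^+\times\R^{d-1}}\frac{\big|u_0(x,\sqrt t)-(e^{it\partial_y^2}u_0)|_{y=0}\big|^2}{t}\,dt\,dx\lesssim\|u_0\|_{H^{1/2}}^2,
\end{equation*}
and this is a genuine new estimate, not a consequence of anything stated earlier. The paper proves it by interpolating between two endpoint inequalities: for $u_0\in H^1$, insert $u_0(x,0)$ and apply Hardy's inequality to $u_0(x,\sqrt t)-u_0(x,0)$ and the fractional Hardy inequality together with the $\dot H^{3/4}_t L^2_x$ bound from sharp Kato smoothing to $(e^{it\partial_y^2}u_0)|_{y=0}-u_0(x,0)$ (giving weight $t^{-3/2}$); for $u_0\in L^2$, bound each term crudely with weight $t^{-1/2}$; interpolation then produces the weight $t^{-1}$ for $u_0\in H^{1/2}$. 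Without this computation your reduction to Theorem~\ref{thtrivial} does not go through at $s=1/2$, because you have not placed $g-v|_{y=0}$ in $\mathcal{H}^{1/2}_{00}$. You should also note that this step, as carried out in the paper, makes essential use of the fact that $g$ is the Dirichlet trace; it is where the restriction to Dirichlet boundary conditions in Theorem~\ref{maintheo}, as opposed to the general $B$ in Theorem~\ref{thtrivial}, is actually exploited.
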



\paragraph{Plan of the article} In section \ref{funcanalysis} we recall a number of standard results on 
Sobolev spaces, and describe the $\mathcal{H}^s$ spaces:
completeness, duality, density properties etc. Section \ref{linestim} is devoted 
to the proof of theorems \ref{thtrivial} and \ref{maintheo}, it also contains the precise assumptions on the boundary 
operator $B$ and the definition of the Kreiss-Lopatinskii.
 In section \ref{WPnonlin} we prove the local well-posedness
in $H^1$ of nonlinear Dirichlet boundary value problems with classical restrictions on the nonlinearity, and 
global well-posedness for small data. Finally section \ref{scattering} is  devoted to the description of the long time 
 behaviour of the global small solutions: we prove that in some sense they do not behave differently from the 
restriction to $y\geq 0$ of  solutions of a linear Cauchy problem.
\section{Notations and functional background}\label{funcanalysis}
\subsection{Notations}
The Fourier transform of a function $u$ is denoted $\widehat{u}$. As we will use Fourier transform in the 
$(x,y)$ variable, $x$ variable or $(x,t)$ variable, we use when necessary the less ambiguous notation
$$\widehat{u}=\mathcal{F}_{x,t}u:=\int_{\R}\int_{\R^{d-1}} u(x,t)e^{-ix\cdot \xi -i\delta t}dx dt.$$ 
The notation $\R_t$ emphasizes the time variable. \\
Lebesgue spaces on a set $\Omega$ are denoted $L^p(\Omega)$. For $X$ a Banach space $L^p_tX:=L^p(\R_t,X)$ or 
depending on the context 
$L^p(\R_t^+,X)$, similarly $L^p_TX:=L^p([0,T],X)$. Similarly, $L^p_x$ refers to functions
defined on $\R^{d-1}$.
When dealing with nonlinear problems, we shall use the 
convenient but unusual notation $L^p=L^{1/p}$ (unambiguous as we work only with Lebesgue spaces with $p>1$).
\\
We write $a\lesssim b$ if $a\leq Cb$ with $C$ a positive constant. Similarly, $a\sim b$ if there 
exists $C_1,C_2>0$ such that $C_1a\leq b\leq C_2b$.
\subsection{Functional  spaces}\label{rappelsobo}
$\mathcal{S}'(\R^d)$ is the set of tempered distributions, dual of $\mathcal{S}(\R^d)$. $L^p(\Omega)$ is the 
Lebesgue space, we follow the usual notation $p':=p/(p-1)$.
For $s\in \R$,  
$$H^s(\R^d)=\bigg\{u\in \mathcal{S}'(\R^d):\int_{\R^d} (1+|\xi|^2)^s |\widehat{u}|^2d\xi<\infty\bigg\}.$$ 
$\dot{H}^s$ is the homogeneous 
Sobolev space. For $\Omega$ open, $H^s(\Omega)$ is defined as the set of restrictions to $\Omega$ 
of distributions in $H^s(\R^n)$, with the restriction norm
\begin{equation*}
\|u\|_{H^s(\Omega)}=\inf_{v\text{ extension of }u}\|v\|_{H^s(\R^d)}.     
\end{equation*}
Similarly, for $X$ a Banach space, $H^s(\Omega,X)$ denotes the Sobolev space of $X$ valued distributions.
We recall a few  facts (see e.g. \cite{lionsmagenes},\cite{lionsmagenes2}):
\begin{enumerate}
 \item For $n$ integer, $\Omega$ smooth simply connected, $H^n(\Omega,X)$ coincides topologically with 
 $\{u:\int_{\Omega}\sum_{|\alpha|\leq n}|\partial^\alpha u|^2dx\}$, that is 
$\|u\|_{H^n(\Omega)}\sim (\int_{\Omega}\sum_{|\alpha|\leq n}|\partial^\alpha v|^2dx)^{1/2}$, with constants
that depend on $\Omega, s$. If $\Omega=I$ is an interval the constants only depend on $1/|I|$ and $s$, in particular 
if $I$ is unbounded they only depend on $s$. The same is true if $\Omega$ is a half space.
\item For any $s\geq 0$, there exists a continuous 
extension operator $T_s:H^t(\Omega,X)\to H^t(\R^d,X)$ for $t\leq s$, moreover $T_s$ can be chosen such that it 
is valued into functions supported in $\{x:\ d(x,\Omega)\leq 1\}$.
If $s<1/2$, the zero extension is such an operator and in this case the operator's norm does not depend on 
$\Omega$.
\item $H^s_0(\Omega)$ is the closure in $\Omega$ of $C_c^\infty$. The extension by zero outside $\Omega$ is continuous 
$H^s_0(\Omega)\to H^s(\R^d)$ if $s\nequiv 1/2[\Z]$, but not if $s\equiv 1/2[\Z]$. The subset of $H^{1/2}$ on which the 
extension by zero is continuous is the so-called Lions-Magenes space $H^{1/2}_{00}$, see \cite{Tartar} section 33.
\end{enumerate}
For $n\in \N$, $W^{n,p}(\R^d)$ is the Sobolev space with norm $\sum_{|\alpha|\leq n}\int 
|\partial^\alpha u|^pdx$.
The Besov spaces on $\R^d$ are denoted $B^s_{p,q}(\R^d)$, they are defined by real interpolation \cite{berglof}
\begin{equation*}
\forall\,0\leq s\leq 2,\ B^s_{p,q}=[L^p,W^{2,p}]_{s/2,q}.
\end{equation*}
As for Sobolev spaces $B^s_{p,q}(\Omega)$ is defined by restriction. Due to the existence of extension operators, 
it is equivalent to define $B^s_{p,q}(\Omega)=[L^p(\Omega),W^{2,p}(\Omega)]_{s/2,q}$, the norm equivalence 
depends on $\Omega$. For $n\in \N$, the following inclusions stand (\cite{berglof} Theorem 6.4.4)
\begin{equation*}
\forall\,p\geq 2,\ B^n_{p,2}(\Omega)\subset W^{n,p}(\Omega),\ W^{n,p'}(\Omega)\subset B^n_{p',2}(\Omega).
\end{equation*}
The extension by zero outside $\Omega$ is often denoted $P_0$ (independently of $\Omega$), the restriction operator 
is denoted $R$.
\subsection{The \texorpdfstring{$\mathcal{H}^s$}{H} spaces}\label{secHs}
\paragraph{Structure and traces}
\begin{prop}\label{structureH}
 For $s\geq 0$, we define the space $\mathcal{H}^s(\R^{d-1}\times\R_t)$ as the set of tempered distributions 
 $g$ such that $\widehat{g}\in L^1_{\text{loc}}$ and 
 \begin{equation*}
\|g\|_{\mathcal{H}^s(\R^{d-1}_x\times \R_t)}^2:=\iint_{\R^{d-1}\times \R}
(1+|\xi|^2+|\delta|)^s\sqrt{||\xi|^2+\delta|} |\widehat{g}|^2d\delta d\xi<\infty.  
 \end{equation*}
When $d$ is unambiguous, we write for conciseness $\mathcal{H}^s(\R_t)$.\\
It is a complete Hilbert space, in which $C^\infty_c(\R^{d-1}_x\times \R_t)$ is dense, and has equivalent norm 
\begin{eqnarray*}
\|g\|_{\mathcal{H}^s}&:=&\bigg(\iint_{\R^{d-1}\times \R}
(1+|\xi|^2+|\delta|)^s\sqrt{||\xi|^2+\delta|} |\widehat{g}|^2d\delta d\xi\bigg)^{1/2}\\
&\sim& \bigg(\iint_{\R^{d-1}\times \R}
(1+|\xi|^{2s}+||\xi|^2+\delta|^s)\sqrt{||\xi|^2+\delta|} |\widehat{g}|^2d\delta d\xi\bigg)^{1/2}.
\end{eqnarray*}
The space $\mathcal{H}^0$ is denoted $\mathcal{H}$.
The map $u\mapsto \nabla_xu$ is continuous $\mathcal{H}^s\rightarrow \mathcal{H}^{s-1}$ for $s\geq 1$, and 
$u\mapsto \partial_tu$ is continuous $\mathcal{H}^s\rightarrow \mathcal{H}^{s-2}$ for $s\geq 2$.\\
For $s>1/2$, $\mathcal{H}^s\hookrightarrow C\big(\R_t,H^{s-1/2}(\R^{d-1}_x)\big)$, in particular for 
any $t\in \R$, the trace operator $g\mapsto g(\cdot,t)$ is continuous $\mathcal{H}^s\to H^{s-1/2}$.
\end{prop}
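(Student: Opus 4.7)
The plan is to work on the Fourier side throughout, identifying $\mathcal{H}^s$ isometrically with the weighted $L^2$ space $L^2(w\,d\xi d\delta)$ for $w(\xi,\delta):=(1+|\xi|^2+|\delta|)^s\sqrt{\big||\xi|^2+\delta\big|}$. The equivalence of the two stated norms reduces to the elementary observation that $|\delta|\le |\xi|^2+\big||\xi|^2+\delta\big|$ gives $1+|\xi|^2+|\delta|\sim 1+|\xi|^2+\big||\xi|^2+\delta\big|$, combined with the standard $(a+b+c)^s\sim a^s+b^s+c^s$ for $a,b,c\ge 0$.

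For completeness, the weighted space $L^2(w)$ is a Hilbert space since $w>0$ almost everywhere. The only subtlety is that a limit $G$ of a Cauchy sequence of Fourier transforms should define a tempered distribution, which follows from Cauchy--Schwarz: on any compact $K$, $\int_K|G|\le \|G\|_{L^2(w)}\|w^{-1/2}\|_{L^2(K)}$, the second factor being finite because $w^{-1}$ blows up only like $\big||\xi|^2+\delta\big|^{-1/2}$, which is integrable in one dimension transverse to the parabola. For density of $C^\infty_c$ I would combine three facts: (i) the Schwartz space embeds continuously into $\mathcal{H}^s$ (since $\widehat\psi$ is rapidly decaying); (ii) smooth compactly supported functions of $(\xi,\delta)$ are dense in $L^2(w)$ (truncate to a compact set, then mollify, using that $w$ is locally bounded), and their inverse Fourier transforms, which are Schwartz, are therefore dense in $\mathcal{H}^s$; (iii) for $\psi\in\mathcal{S}$ the cutoffs $\chi(x/R,t/R)\psi\in C^\infty_c$ converge to $\psi$ in the Schwartz topology, hence in $\mathcal{H}^s$. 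The continuity of $\nabla_x\colon\mathcal{H}^s\to\mathcal{H}^{s-1}$ and $\partial_t\colon\mathcal{H}^s\to\mathcal{H}^{s-2}$ is then immediate from the pointwise inequalities $|\xi|^2(1+|\xi|^2+|\delta|)^{s-1}\le(1+|\xi|^2+|\delta|)^s$ and $|\delta|^2\lesssim(1+|\xi|^2+|\delta|)^2$.

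The trace embedding is the quantitative core. For $g\in C^\infty_c$ one has $\mathcal{F}_x g(\xi,t)=\frac{1}{2\pi}\int e^{it\delta}\widehat g(\xi,\delta)\,d\delta$, so Cauchy--Schwarz gives
\[|\mathcal{F}_x g(\xi,t)|^2\lesssim I(\xi)\int|\widehat g(\xi,\delta)|^2\,w(\xi,\delta)\,d\delta,\qquad I(\xi):=\int\frac{d\delta}{w(\xi,\delta)}.\]
The key technical step is bounding $I(\xi)$: after the change of variable $u=\delta+|\xi|^2$, I would split at $|u|=2|\xi|^2$. On $\{|u|\le 2|\xi|^2\}$ the factor $(1+|\xi|^2+|u-|\xi|^2|)^s$ is comparable to $(1+|\xi|^2)^s$ and $\int_{|u|\le 2|\xi|^2}|u|^{-1/2}du\lesssim \sqrt{1+|\xi|^2}$, while on $\{|u|>2|\xi|^2\}$ it is comparable to $(1+|u|)^s$ and the tail $\int |u|^{-s-1/2}du$ converges precisely because $s>1/2$; both contributions give $I(\xi)\lesssim(1+|\xi|^2)^{-(s-1/2)}$. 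Multiplying by $(1+|\xi|^2)^{s-1/2}$ and integrating in $\xi$ produces the uniform bound $\sup_t\|g(\cdot,t)\|_{H^{s-1/2}}\lesssim\|g\|_{\mathcal{H}^s}$. Continuity of $t\mapsto g(\cdot,t)$ in $H^{s-1/2}$ follows by approximating $g$ in $\mathcal{H}^s$ by $C^\infty_c$ functions (for which continuity is trivial) and invoking the uniform bound on the tail.

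The main obstacle throughout is the degeneracy of $w$ on the characteristic parabola $\delta+|\xi|^2=0$: it forces the two-region splitting in the computation of $I(\xi)$, it explains the threshold $s>1/2$ as the tail-convergence condition, and it is the reason we must verify local integrability of $w^{-1/2}$ in the completeness and density arguments. Every other step is routine Fourier-multiplier bookkeeping.
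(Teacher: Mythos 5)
Your argument follows essentially the same route as the paper: identify $\mathcal{H}^s$ with a weighted $L^2$ on the Fourier side, verify completeness and density there, and prove the trace bound by Cauchy--Schwarz against the weight $w$ and then estimating $\int d\delta/w(\xi,\delta)$. Your two-region splitting at $|u|\sim|\xi|^2$ after the change of variable $u=\delta+|\xi|^2$ is a parametrization of the same dichotomy the paper handles via the substitution $\delta=|\xi|^2\mu$ for $|\xi|\ge1$, and both isolate $s>1/2$ as the tail-convergence threshold. Your derivation of the equivalent norm from $1+|\xi|^2+|\delta|\sim 1+|\xi|^2+\big||\xi|^2+\delta\big|$ plus $(a+b+c)^s\sim a^s+b^s+c^s$ is cleaner than the inequality invoked in the paper, and your continuity-by-density argument is an acceptable substitute for the paper's use of strong continuity of the translation group $T_t$.

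One small gap in the completeness step: local integrability of the limit $G$ alone puts $G$ in $\mathcal{D}'$, not in $\mathcal{S}'$, so it does not by itself authorize taking the inverse Fourier transform as a \emph{tempered} distribution. You need a global Cauchy--Schwarz against a polynomial weight, which is exactly what the paper does:
\begin{equation*}
\iint_{\R^{d-1}\times\R}\frac{|G(\xi,\delta)|}{(1+|\xi|+|\delta|)^{d}}\,d\xi\,d\delta
\le \|G\|_{L^2(w)}\Bigl(\iint \frac{d\xi\,d\delta}{(1+|\xi|+|\delta|)^{2d}\sqrt{\big||\xi|^2+\delta\big|}}\Bigr)^{1/2}<\infty,
\end{equation*}
which shows $G$ pairs continuously with Schwartz functions. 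Since your local estimate is the restriction of this global one, the fix is trivial, but as written the assertion ``$G$ defines a tempered distribution'' is not justified.
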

\begin{proof}
Obviously, $\mathcal{H}^s\subset \mathcal{H}^{s'}$ for $s>s'$. 
Let $g\in \mathcal{H}$, from Cauchy-Schwarz's inequality
\begin{eqnarray*}
\iint_{\R^{d-1}\times \R}|\widehat{g}(\xi,\delta)|(1+|\xi|+|\delta|)^{-d}d\xi d\delta &\leq& \|g\|_{\mathcal{H}}
\bigg(\iint  \frac{1}{(1+|\xi|+|\delta|)^{2d}\sqrt{||\xi|^2+\delta|}}d\xi d\delta\bigg)^{1/2}\\
&\lesssim&  \|g\|_{\mathcal{H}}
 \bigg(\int_{\R^{d-1}}  \frac{1}{(1+|\xi|)^{d+1}}d\xi d\delta\bigg)^{1/2}\lesssim \|g\|_{\mathcal{H}},
\end{eqnarray*}
thus the embedding $\mathcal{H}\hookrightarrow \mathcal{S}'$ is continuous. 
We define the measure $\mu$ by $d\mu=(1+|\xi|^2+|\delta|)^s\sqrt{||\xi|^2+\delta|}d\delta d\xi$. 
If $g_n$ is a Cauchy sequence in $\mathcal{H}^s$, $\widehat{g_n}$ is a Cauchy sequence in $L^2(d\mu)$. 
By completeness of Lebesgue spaces, there exists $v\in L^2(d\mu)$ such that 
$\|\widehat{g_n}-v\|\longrightarrow 0$. From the previous computations, $\mathcal{F}^{-1}(v)\in \mathcal{S}'$ 
and $\lim_{\mathcal{S}'}g_n=\mathcal{F}^{-1}v\in \mathcal{H}^s$.\\
The density of $C_c^\infty$ in $\mathcal{H}^s$ is obtained via the usual procedure. The equivalence of norms 
is a consequence of the elementary inequality $|a+b|^s\geq (1-2^{-1/s})^s(|a|^s-2|b|^s)$.
\vspace{1mm}\\
Let us now consider the trace problem. We start with the existence of a trace at $t=0$: 
\begin{eqnarray*}
g(x,0)&=&\int_{\R^{d-1}\times \R}e^{ix\cdot\xi}\widehat{g}(\xi,\delta) d\delta d\xi,\\
\Rightarrow 
\|g(\cdot,0)\|_{H^{s-1/2}}^2&=&\int_{\R^{d-1}}|(1+\xi|)^{2s-1}\bigg|\int_\R \widehat{g}d\delta\bigg|^2d\xi\\
&\leq& 
\int_{\R^{d-1}}\bigg( \int_\R|\widehat{g}|^2\sqrt{||\xi|^2+\delta|}(1+|\xi|^2+|\delta|)^sd\delta \\
&&\hspace{2cm}\times(1+|\xi|)^{2s-1}\int_\R\frac{1}{\sqrt{||\xi|^2+\delta|}(1+|\xi|^2+|\delta|)^s}d\delta\bigg) d\xi.
\end{eqnarray*}
Now clearly $\int_\R\frac{1}{\sqrt{||\xi|^2+\delta|}(1+|\xi|^2+|\delta|)^s}d\delta$ is bounded for
$|\xi|\leq 1$, and for $|\xi|\geq 1$ setting $\tau=|\xi|^2\mu$
\begin{equation*}
|\xi|^{2s-1}\int_\R\frac{1}{\sqrt{||\xi|^2+\delta|}(|\xi|^2+|\delta |)^s}d\tau\leq \int_\R 
\frac{1}{\sqrt{|1+\mu|}(1+|\mu|)^s}d\mu<\infty.
\end{equation*}
Therefore the trace at $t=0$ maps continuously $\mathcal{H}^s(\R_t)$ to $H^{s-1/2}(\R^{d-1})$. 
It is easily checked that the map $T_t:\ g\rightarrow g(\cdot,\cdot+t)$ is unitary $\mathcal{H}^s\rightarrow 
\mathcal{H}^s$ and for any $g\in \mathcal{H}^s,\ \lim_0 \|T_tg-g\|_{\mathcal{H}^s}=0$. 
Combining this observation with the existence of the trace at $t=0$ 
implies the embedding $\mathcal{H}^s\hookrightarrow C_tH^{s-1/2}$.
\end{proof}
Finally, we identify $(\mathcal{H}^s)'$ in a standard way:
\begin{prop}[Duality of $\mathcal{H}^s$ spaces]
For $s>0$, the topological dual $(\mathcal{H}^s)'$ is the set of tempered distributions $g'$ such that 
$\widehat{g'}\in L^1_{\text{loc}}$ and 
\begin{equation*}
\|g'\|_{(\mathcal{H}^s)'}^2=\iint_{\R^{d-1}\times \R_t}\frac{(1+|\xi|^2+|\delta|)^{-s}}{\sqrt{||\xi|^2+\delta|}}
|\widehat{g'}|^2d\delta d\xi<\infty,
\end{equation*}
$\mathcal{S}(\R^n)$ is dense in $(\mathcal{H}^s)'$, and $(\mathcal{H}^s)'$ acts on $\mathcal{H}^s$
with the $L^2$ duality bracket
\begin{equation*}
\langle g,\,g'\rangle_{\mathcal{H}^s,(\mathcal{H}^s)'}=\iint \widehat{g}\overline{\widehat{g'}}d\delta d\xi.
\end{equation*}
\end{prop}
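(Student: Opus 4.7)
The plan is to reduce the statement to a weighted $L^2$ duality on the Fourier side. I set $w_s(\xi,\delta) := (1+|\xi|^2+|\delta|)^s \sqrt{||\xi|^2+\delta|}$ and $d\mu_s := w_s\, d\xi d\delta$. By definition of the $\mathcal{H}^s$ norm together with Plancherel, the Fourier transform $\mathcal{F}_{x,t}$ is (up to a fixed constant) an isometric isomorphism from $\mathcal{H}^s$ onto the weighted space $L^2(d\mu_s)$; in particular $\mathcal{H}^s$ is a Hilbert space. Riesz representation in $L^2(d\mu_s)$ then says that any $\ell \in (\mathcal{H}^s)'$ is of the form $\ell(g) = \int \widehat{g}\,\bar\phi\, d\mu_s$ for a unique $\phi \in L^2(d\mu_s)$, with $\|\ell\|=\|\phi\|_{L^2(d\mu_s)}$.

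I would then change unknown to $\psi := w_s\phi$, which rewrites the formula as $\ell(g) = \int \widehat{g}\,\bar\psi\, d\xi d\delta$ and the norm identity as $\int |\psi|^2 w_s^{-1}\, d\xi d\delta = \|\ell\|^2$, matching the announced dual norm once $g' := \mathcal{F}^{-1}\psi$ is defined as a tempered distribution. To check this, I mimic the Cauchy--Schwarz argument already used in Proposition~\ref{structureH}: for any compact $K \subset \R^{d-1}\times\R$,
\begin{equation*}
\int_K |\psi|\, d\xi d\delta \leq \Big(\int_K |\psi|^2 w_s^{-1}\Big)^{1/2}\Big(\int_K w_s\Big)^{1/2}<\infty,
\end{equation*}
because the factor $\sqrt{||\xi|^2+\delta|}$ has only an integrable square-root singularity along the codimension-one parabolic set $\{\delta=-|\xi|^2\}$. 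Replacing $\mathbf{1}_K$ by the polynomial weight $(1+|\xi|+|\delta|)^{-N}$ with $N$ large enough gives $\psi \in \mathcal{S}'$, hence $g' := \mathcal{F}^{-1}\psi$ is tempered with $\widehat{g'}=\psi\in L^1_{\mathrm{loc}}$ and satisfies the dual norm bound. The converse inclusion is immediate: for any such $g'$, writing $1=w_s^{1/2}\cdot w_s^{-1/2}$ and applying Cauchy--Schwarz gives
\begin{equation*}
\Big|\int \widehat{g}\,\overline{\widehat{g'}}\,d\xi d\delta\Big|\leq \|g\|_{\mathcal{H}^s}\|g'\|_{(\mathcal{H}^s)'},
\end{equation*}
so $g'$ defines a continuous functional through the announced pairing, and altogether we obtain the isometry $(\mathcal{H}^s)' \simeq L^2(w_s^{-1}d\xi d\delta)$ on the Fourier side.

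Finally, density of $\mathcal{S}$ in $(\mathcal{H}^s)'$ reduces through this Fourier isometry to density of Schwartz functions in $L^2(w_s^{-1}d\xi d\delta)$, obtained by the standard truncate-and-mollify scheme; the only ingredient required is local integrability of $w_s^{-1}$, again a consequence of the integrability of $|\delta+|\xi|^2|^{-1/2}$ across a smooth codimension-one surface. Inverse Fourier transform carries this density back to $(\mathcal{H}^s)'$ and since it stabilises $\mathcal{S}$ the conclusion follows. The main technical point throughout is handling the weight singularity along $\{\delta+|\xi|^2=0\}$ when asserting either the tempered character of $\widehat{g'}$ or approximability by smooth functions, but in both cases it boils down to the one-dimensional bound $\int_{-1}^{1}|t|^{-1/2}dt<\infty$.
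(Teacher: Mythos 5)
Your proof is correct, and since the paper states this proposition without proof (labelling it ``standard''), your weighted-$L^2$ duality argument via the Fourier isometry is precisely the route that is being invoked implicitly: $\mathcal{F}$ carries $\mathcal{H}^s$ isometrically onto $L^2(w_s\,d\xi d\delta)$, Riesz representation there is transported back, and the substitution $\psi=w_s\phi$ converts the weighted pairing into the flat $L^2$ pairing against a function in $L^2(w_s^{-1})$.

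One small remark to tighten the write-up: in the Cauchy--Schwarz step $\int_K|\psi|\le(\int_K|\psi|^2w_s^{-1})^{1/2}(\int_Kw_s)^{1/2}$ the clause ``because $\sqrt{||\xi|^2+\delta|}$ has an integrable square-root singularity'' is misdirected, since $w_s$ is \emph{continuous} (it vanishes, rather than blows up, on $\{\delta+|\xi|^2=0\}$), so $\int_Kw_s<\infty$ is trivial. The square-root singularity is a property of $w_s^{-1}$ and is exactly what you need, and correctly use, in the two later places: the polynomial-weight estimate showing $\psi\in\mathcal{S}'$ (there the relevant bound is $w_s\lesssim(1+|\xi|+|\delta|)^{2s+1}$, giving convergence of $\int w_s(1+|\xi|+|\delta|)^{-2N}$ for $N$ large), and the local finiteness of the measure $w_s^{-1}d\xi d\delta$ used in the density argument. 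With that caveat noted, the argument is complete and sound.
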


\paragraph{Restrictions, extensions}

\begin{define}
For $s\geq 0$, $I$ an interval the space $\mathcal{H}^s(I)$ is the set of restrictions to 
$\R^{d-1}\times I$ of distributions in $\mathcal{H}^s(\R_t)$, with norm 
$\displaystyle \|g\|_{\mathcal{H}^s(I)}:=\inf_{\widetilde{g}\text{ extension}}
\|\widetilde{g}\|_{\mathcal{H}^s}$.\\
For $s\not \equiv 1/2[\Z]$, we define $\mathcal{H}^s_0=\mathcal{H}^s$ if $s<1/2$, else 
$$\mathcal{H}^s_0((a,b))=\{g\in \mathcal{H}^s((a,b)):\ 
\forall\, 0\leq 2k\leq [s-1/2],\ \lim_{a,b} \|\partial_t^k g(\cdot,t)\|_{H^{s-2k-1/2}}
=0\}.$$
\end{define} 

\noindent
Obviously, if $a$ (or $b$) is finite, the definition above simply amounts to 
$\partial_t^kg(\cdot,a)=0$.\\
A very convenient observation is that $\mathcal{H}^s$ is a kind of Bourgain space: let $\Delta'$ be the laplacian on
$\R^{d-1}$, we have using the change of variable $\delta -\xi^2=\mu$ 
\begin{eqnarray*}
\|e^{-it\Delta'}g\|_{\dot{H}_t^{(1+2s)/4}L^2_{x}\cap \dot{H}^{1/4}_tH^s}^2&=&\iint |\delta|^{1/2}
\big(1+|\delta|^s+|\xi|^{2s})\big|\mathcal{F}_{x,t}e^{-it\Delta}g\big|^2d\delta d\xi\\
&=&\iint |\delta|^{1/2}\big(1+|\delta|^s+|\xi|^{2s})|\widehat{g}(\xi,\delta-\xi^2)|^2d\delta d\xi\\
&\sim &\iint |\xi^2+\mu|^{1/2}\big(1+|\mu|^s+|\xi|^{2s})|\widehat{g}(\xi,\mu)|^2d\mu d\xi.
\end{eqnarray*}
so that $\|g\|_{\mathcal{H}^s}\sim \|e^{-it\Delta'}g\|_{\dot{H}^{(1+2s)/4}L^2_{x}\cap \dot{H}^{1/4}H^s}$. The following
results are elementary consequences of this remark and the classical theory on Sobolev spaces.

\begin{coro}\label{normeloc}
Let $I$  an interval, $g\in \mathcal{H}^s(I)$. We define the zero extension $P_0:g\mapsto P_0g$
\begin{equation*}
P_0g(\cdot,t)=
\left\{
\begin{array}{ll}
 g(\cdot,t)\text{ if }t\in I,\\
 0 \text{ else}.
\end{array}
\right.
\end{equation*}
We have the following assertions:
\begin{enumerate}
\item 
With constants only depending on $s$ 
\begin{equation*}
\|g\|_{\mathcal{H}^s(I)}\sim \|e^{-it\Delta'}g\|_{\dot{H}^{(2s+1)/4}(I,L^2)\cap \dot{H}^{1/4}(I,H^s)}.
\end{equation*}
 \item For any $s\geq 0$, there exists an extension operator $\mathcal{T}_s$ such that for $k\leq s$, 
 $\mathcal{T}_s:\mathcal{H}^k(I)\to \mathcal{H}^k(\R)$ is continuous and 
 for any $g\in \mathcal{H}^s(I),$ $\mathcal{T}_sg(t)=0$ for $t\notin (\inf I-1,\ \sup I+1)$.\\
 If $s<1/2$, $P_0$ is such an operator.
\item For $s\geq 0$, $g\in \mathcal{H}^s(\R)$, then 
$\displaystyle \lim_{T\rightarrow \infty}\|g\|_{\mathcal{H}^s([T,\infty[)}=0$.
\item For $s\geq 0$, $\mathcal{H}^s_0(\R)=\mathcal{H}^s$, moreover if 
$s\not\equiv 1/2[\Z]$ $P_0$ is continuous $\mathcal{H}^s_0(I)\rightarrow \mathcal{H}^s(\R)$.
\item The restriction operator $(\mathcal{H}(\R))'\rightarrow (\mathcal{H}(I))',\ g\mapsto P_0^*(g)$ is a continuous 
surjection.
\end{enumerate}
\end{coro}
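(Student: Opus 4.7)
The plan is to derive all five assertions from the identity
\[
\|g\|_{\mathcal{H}^s}\sim \|e^{-it\Delta'}g\|_{\dot{H}^{(2s+1)/4}(\R,L^2)\cap \dot{H}^{1/4}(\R,H^s)}
\]
established just before the statement, thereby reducing matters to classical facts about vector-valued Sobolev spaces on intervals. The key technical remark is that the unitary multiplier $e^{-it\Delta'}$ acts fiberwise in the spatial frequency $\xi$, so it commutes with restriction to $\R^{d-1}\times I$ and with extension by an arbitrary function of $t$; in particular it puts extensions of $g$ to $\R^{d-1}\times\R$ in one-to-one correspondence with extensions of $G:=e^{-it\Delta'}g$, with equivalence of norms. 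For item (1), applying this bijection to the definition of the restriction norm immediately gives the equivalence with the classical interpolation Sobolev norm of $G$ restricted to $I$. For item (2), the plan is to pull back a classical extension operator: take $\tilde{\mathcal{T}}_s$ bounded on the scale $H^k(I,L^2)\cap H^k(I,H^s)$ for $0\leq k\leq s$ and valued in functions supported in a $1$-neighbourhood of $I$ (obtained via Seeley-type reflection followed by a cutoff), and set $\mathcal{T}_s g := e^{it\Delta'}\tilde{\mathcal{T}}_s(e^{-it\Delta'}g)$. For $s<1/2$, zero-extension is bounded on the classical scale with constants independent of $I$, whence the same for $P_0$.

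For item (3), applying (1) to $g$ restricted to $[T,\infty)$ reduces the claim to the assertion that the classical Sobolev norm of $G$ over $[T,\infty)$ tends to zero, which is absolute continuity of integration. The inclusion $\mathcal{H}^s_0(\R)\subset\mathcal{H}^s$ in (4) is trivial; for the converse, the continuity of $\partial_t^k:\mathcal{H}^s\to\mathcal{H}^{s-2k}$ combined with the embedding $\mathcal{H}^{s-2k}\hookrightarrow C_tH^{s-2k-1/2}$ from Proposition \ref{structureH}, valid for $0\leq 2k\leq[s-1/2]$, shows $\partial_t^k g\in C_tH^{s-2k-1/2}$. Density of $C_c^\infty(\R^{d-1}\times\R)$ in $\mathcal{H}^s$ then forces the traces of $\partial_t^k g$ to vanish as $|t|\to\infty$, since approximating test functions have compactly supported traces and the trace maps are continuous. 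The continuity of $P_0:\mathcal{H}^s_0(I)\to\mathcal{H}^s(\R)$ for $s\not\equiv 1/2[\Z]$ is, after passing to $G$, the classical statement for zero-extension on $H^s_0$ away from half-integers.

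Item (5) is purely functional-analytic: by (2) applied with $s=0$, the restriction $R:\mathcal{H}(\R)\to\mathcal{H}(I)$ is continuous and admits $P_0$ as a continuous right inverse, hence is surjective. Given $f\in(\mathcal{H}(I))'$, the functional defined by $g(u):=f(Ru)$ belongs to $(\mathcal{H}(\R))'$ with $\|g\|\leq\|R\|\,\|f\|$ and satisfies $P_0^*g=f$, since $\langle P_0^*g,v\rangle=g(P_0v)=f(RP_0 v)=f(v)$.

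The genuinely delicate point is item (4): identifying the vanishing-at-infinity conditions defining $\mathcal{H}^s_0(\R)$ as automatic requires combining density of $C_c^\infty$ with the time-continuity of all admissible derivative-traces from Proposition \ref{structureH}, plus uniform estimates to pass to the limit; moreover one must separately exclude the endpoints $s\equiv 1/2[\Z]$ in the continuity of $P_0$ from a bounded interval, exactly as in the classical Lions-Magenes theory.
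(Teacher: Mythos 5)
Your proposal is correct and follows essentially the same route as the paper: conjugating by the unitary multiplier $e^{-it\Delta'}$ to identify $\mathcal{H}^s(I)$ with a classical vector-valued Sobolev scale on $I$, then pulling back standard extension, restriction, trace, and duality facts. The only place where you gloss over a genuine (if small) subtlety is item (3): the claim that $\|G\|_{\dot{H}^{(2s+1)/4}([T,\infty),L^2)\cap\dot{H}^{1/4}([T,\infty),H^s)}\to 0$ is ``absolute continuity of integration'' does not apply literally, because this is a \emph{restriction} norm (an infimum over extensions), not an integral over $[T,\infty)$. The paper closes this gap by first noting the statement for integer-order $H^r$ (where intrinsic and restriction norms are equivalent with constants independent of the unbounded interval), then using density and the pointwise domination $\|\cdot\|_{\dot H^{\theta}(I,\cdot)}\leq\|\cdot\|_{H^k(I,\cdot)}$ for $k\geq\theta$ integer to pass to the fractional scale; you should incorporate a similar step. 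Everything else, including your explicit construction of the surjectivity preimage in item (5), matches the paper's argument in spirit.
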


\begin{proof}
$1.$ is a direct consequence of the definition of Sobolev spaces by restriction.\\
$2.$ According to paragraph 
\ref{rappelsobo}, there exists an extension operator $T$ such that 
\begin{equation*}
\|T(e^{-it\Delta'}g)\|_{\dot{H}^{(1+2s)/4}(\R,L^2_{x})\cap \dot{H}^{1/4}(\R,H^s)}\lesssim 
\|e^{-it\Delta'}g\|_{\dot{H}^{(1+2s)/4}(I,L^2_{x})\cap \dot{H}^{1/4}(I,H^s)}\lesssim \|g\|_{\mathcal{H}^s(I)}.
\end{equation*}
It is then clear that $\mathcal{T}=e^{it\Delta'}T(e^{-it\Delta'})$ defines a continuous extension 
operator. \\
$3.$ If $r$ is an integer, $\lim_{T\rightarrow \infty}\|f\|_{H^r([T,\infty[)}= 0$ (note that this is true also 
for the restriction norm), then we can conclude by 
a density argument and the inequality 
\begin{equation*}
\|e^{-it\Delta}g\|_{\dot{H}^{(1+2s)/4}(I,L^2_{x})\cap \dot{H}^{1/4}(I,H^s)}\leq 
\|e^{-it\Delta}g\|_{H^k(I,L^2)\cap H^1(I,H^s)},\ k\geq (1+2s)/4. 
\end{equation*}
$4.$ Let $g\in \mathcal{H}^s(\R)$. By continuity of the trace and point 3 
$$\lim_{\infty}\|\partial_t^kg(\cdot,t)\|_{H^{s-2k-1/2}}\lesssim
\lim_{\infty}\|g\|_{\mathcal{H}^s([T,\infty))},$$
the limit at $-\infty$ follows from a symmetry argument. \\
Now fix $a\in \R$. If for $0\leq 2k\leq s-1/2$, $\partial_t^kg(\cdot,a)=0$, this 
implies clearly $\partial_t^k(e^{-it\Delta}g)(\cdot,a)=0$, so that we can apply the continuity of the extension 
by $0$ for $e^{-it\Delta'}g$ in the usual Sobolev spaces.\\
5. Continuity follows from point 4, the surjectivity from the definition of $\mathcal{H}(I)$.
\end{proof}

Similarly to the  Sobolev space $H^{1/2}(\R^+)$, the zero extension is \emph{not} continuous 
$\mathcal{H}^{1/2}(\R^+)\to \mathcal{H}^{1/2}(\R)$. Nevertheless, we observe that 
$P_0g\in \mathcal{H}^{1/2}(\R)$ if $e^{-it\Delta'}P_0g=P_0e^{-it\Delta'}g\in \dot{H}^{1/2}L^2\cap \dot{H}^{1/4}H^{1/2}$, 
which is true if $e^{-it\Delta'}g\in \dot{H}^{1/2}(\R^+,L^2)\cap \dot{H}^{1/4}(\R^+,H^{1/2})$ and 
(see \cite{Tartar} section 33)
\begin{equation}\label{condition00}
I(g):=\int_{\R^+\times \R^{d-1}}\frac{|e^{-it\Delta'}g(x,t)|^2}{t}dtdx<\infty.
\end{equation}
Or more compactly $e^{-it\Delta'}g\in \dot{H}^{1/2}_{00}(\R^+,L^2)\cap \dot{H}^{1/4}(\R^+,H^{1/2})$, 
endowed with the norm 
\begin{equation*}
 \|e^{-it\Delta'}g\|_{\dot{H}^{1/2}_{00}L^2\cap \dot{H}^{1/4}H^{1/2}}:=
 \|e^{-it\Delta'}g\|_{\dot{H}^{1/2}L^2\cap \dot{H}^{1/4}H^{1/2}}+I(g)^{1/2}.
\end{equation*}
These observations lead to 
the following definition:

\begin{define}\label{def00}
We denote $\mathcal{H}^{1/2}_{00}(\R^+):=\{g\in \mathcal{H}^{1/2}(\R^+):\ P_0g\in \mathcal{H}^{1/2}(\R)\}$, 
it  coincides with 
$\{g:\ e^{-it\Delta}g\in \dot{H}^{1/2}_{00}\cap \dot{H}^{1/4}H^{1/2}\}$, and is a Banach space for the norm 
\begin{equation}\label{norme00}
 \|g\|_{\mathcal{H}^{1/2}_{00}}=
 \|e^{-it\Delta'}g\|_{\dot{H}^{1/2}L^2\cap \dot{H}^{1/4}H^{1/2}}+I(g)^{1/2}.
\end{equation}
\end{define}

\begin{rmq}
Of course we could also define $\mathcal{H}^{1/2}_{00}(I)$, but it is not useful for this paper.
\end{rmq}

\paragraph{Interpolation}
For basic definitions of interpolation, we refer to \cite{berglof}, sections 3.1 and 4.1.
We denote $[\cdot,\cdot]_{\theta}$ the complex interpolation functor and 
$[\cdot,\cdot]_{\theta,2}$ the real interpolation functor with parameter $2$.

\begin{prop}
For $s_0,s_1\geq 0$, $0<\theta<1$ we have
\begin{eqnarray*}
[\mathcal{H}^{s_0},\mathcal{H}^{s_1}]_{\theta}&=&\mathcal{H}^{(1-\theta)s_0+ \theta s_1}\text{ (complex interpolation) },
\\
\ [\mathcal{H}^{s_0},\mathcal{H}^{s_1}]_{\theta,2}&=&
\mathcal{H}^{(1-\theta)s_0+\theta s_1}\text{ (real interpolation) }.
\end{eqnarray*}
\end{prop}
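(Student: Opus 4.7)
The key observation is that the Fourier transform realizes $\mathcal{H}^s$ isometrically as the weighted $L^2$ space
\[
L^2\bigl(\R^{d-1}\times\R,\; w_s(\xi,\delta)\, d\xi\, d\delta\bigr), \qquad w_s(\xi,\delta) := (1+|\xi|^2+|\delta|)^s \sqrt{\bigl||\xi|^2+\delta\bigr|}.
\]
Since both the complex and real interpolation functors respect isometric isomorphisms of Banach spaces, it suffices to compute $[L^2(w_{s_0}),L^2(w_{s_1})]_\theta$ and $[L^2(w_{s_0}),L^2(w_{s_1})]_{\theta,2}$.

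The plan is then to invoke the standard Stein--Weiss theorem for weighted $L^2$ spaces (see Bergh--L\"ofstr\"om \cite{berglof}, Theorems 5.4.1 and 5.5.3): for any pair of measurable weights $w_0, w_1$ that are positive almost everywhere on a $\sigma$-finite measure space,
\[
\bigl[L^2(w_0),L^2(w_1)\bigr]_\theta = \bigl[L^2(w_0),L^2(w_1)\bigr]_{\theta,2} = L^2\bigl(w_0^{1-\theta} w_1^\theta\bigr),
\]
with equality of norms (complex case) and equivalence of norms (real case). This applies in our setting since $w_{s_0}$ and $w_{s_1}$ vanish only on the parabola $\delta=-|\xi|^2$, a set of measure zero. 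It then remains to compute the interpolated weight: because the factor $\sqrt{||\xi|^2+\delta|}$ is common to both $w_{s_0}$ and $w_{s_1}$, one gets
\[
w_{s_0}^{1-\theta}\, w_{s_1}^{\theta} = (1+|\xi|^2+|\delta|)^{(1-\theta)s_0 + \theta s_1}\sqrt{\bigl||\xi|^2+\delta\bigr|} = w_{(1-\theta)s_0 + \theta s_1},
\]
which yields both identities simultaneously.

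There is no real obstacle here: the argument is a direct transfer, via Fourier transform, of the Stein--Weiss weighted interpolation result. A mild alternative, which avoids quoting Stein--Weiss, would be to exploit the isomorphism $g \mapsto e^{-it\Delta'}g$ from $\mathcal{H}^s$ onto $\dot H^{(1+2s)/4}(\R_t,L^2)\cap \dot H^{1/4}(\R_t,H^s)$ noted just before Corollary \ref{normeloc}, and then interpolate this intersection of classical homogeneous Sobolev spaces. However, this route requires an additional interpolation-of-intersections argument and is strictly more cumbersome than the weighted $L^2$ approach above.
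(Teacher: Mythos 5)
Your proof is correct and is essentially identical to the paper's: the paper likewise reduces via Fourier transform to interpolation of weighted $L^2$ spaces and cites Theorems 5.4.1 and 5.5.3 of Bergh--L\"ofstr\"om. Your write-up simply spells out the weight computation explicitly.
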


\begin{proof}
By Fourier transform we are reduced to the interpolation of weighted $L^2$ spaces. 
For real interpolation, this is theorem $5.4.1$ of \cite{berglof}, for complex interpolation this is theorem 
$5.5.3$.
\end{proof}
The interpolation of $\mathcal{H}^s_0$ spaces is a bit more delicate.
\begin{prop}\label{interpHs0}
For $0<\theta<1$, $\theta \neq 1/4$, $I$ an interval we have
\begin{eqnarray*}
[\mathcal{H}_0(I),\mathcal{H}^{2}_0(I)]_{\theta}&=&\mathcal{H}^{2\theta }_0(I)\text{ (complex interpolation) },
\\
\ [\mathcal{H}_0(I),\mathcal{H}^{2}_0(I)]_{\theta,2}&=&
\mathcal{H}^{2\theta }_0(I)\text{ (real interpolation) }.
\end{eqnarray*}
If $s_0=0,\ s_1=2, \theta=1/4$, then 
\begin{eqnarray*}
[\mathcal{H}_0(\R^+),\mathcal{H}^{2}_0(\R^+)]_{1/4}&=&\mathcal{H}^{1/2}_{00}(\R^+)\text{ (complex interpolation) },
\\
\ [\mathcal{H}(\R^+),\mathcal{H}^{2}_0(\R^+)]_{1/4,2}&=&
\mathcal{H}^{1/2}_{00}(\R^+)\text{ (real interpolation) }.
\end{eqnarray*}
\end{prop}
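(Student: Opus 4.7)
The whole statement reduces to a classical interpolation problem via the isomorphism $\Phi: g \mapsto e^{-it\Delta'}g$ furnished by Corollary \ref{normeloc}(1), which identifies $\mathcal{H}^s(I)$ with the intersection $\dot{H}^{(2s+1)/4}(I, L^2) \cap \dot{H}^{1/4}(I, H^s)$. Since $s \in [0,2]$, the bracket $[s-1/2]$ is at most $1$, so only the $k=0$ trace is active in the definition of $\mathcal{H}^s_0(I)$; and since $e^{-it\Delta'}|_{t=0} = \mathrm{Id}$, the image $\Phi(\mathcal{H}^s_0(I))$ is precisely the above intersection with the added condition $u|_{\partial I} = 0$ on the first factor. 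Because interpolation functors commute with isomorphisms, this reduces the problem to interpolating the corresponding Sobolev-type spaces in the new variables.

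For the generic case $\theta \neq 1/4$, I would first treat the full line $I = \R$: after Fourier transform in $(x,t)$ both factors become weighted $L^2$ spaces, so real and complex interpolation commute with the weights by Bergh--L\"ofstr\"om Theorems 5.4.1 and 5.5.3 (exactly as in the proof of the previous proposition). For an interval, Corollary \ref{normeloc}(2) provides an extension operator making $\mathcal{H}^s(I)$ a retract of $\mathcal{H}^s(\R)$, transferring the identity. Only the temporal factor $\dot{H}^{(2s+1)/4}(I, L^2)$ carries a trace condition, so applying the classical Lions--Magenes theorem (Tartar, \S33) for $2\theta \neq 1/2$,
\[
[L^2(I), H^2_0(I)]_\theta = H^{2\theta}_0(I),
\]
and the analogous identity for real interpolation with parameter $2$, yields after pulling back through $\Phi^{-1}$ the characterization $[\mathcal{H}(I), \mathcal{H}^2_0(I)]_\theta = \mathcal{H}^{2\theta}_0(I)$.

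The delicate case is $\theta = 1/4$ on $I = \R^+$, where the temporal regularity of the first factor hits the critical threshold $1/2$. The classical Lions--Magenes identity reads
\[
[L^2(\R^+), H^2_0(\R^+)]_{1/4} = H^{1/2}_{00}(\R^+),
\]
with the characterizing extra condition $\int_0^\infty |u(t)|^2/t\, dt < \infty$. Transported through $\Phi$ on the first factor, this extra integral is exactly $I(g)$ of \eqref{condition00}, while the second factor $\dot{H}^{1/4}(I, H^s)$ crosses $s = 1/2$ with temporal regularity $1/4 < 1/2$ and therefore contributes no new trace condition. Summing the two contributions reproduces the norm \eqref{norme00}, identifying the interpolated space with $\mathcal{H}^{1/2}_{00}(\R^+)$ for both interpolation methods. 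The main obstacle here is precisely this critical exchange: one must verify that the Lions--Magenes extra term passes through the intersection cleanly at $\theta = 1/4$, which in the Hilbert framework is justified by writing both factors as weighted $L^2$ spaces, applying the explicit form of the interpolation norm, and using that only one component undergoes a trace transition at the critical parameter.
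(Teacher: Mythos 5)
Your proposal via the isomorphism $\Phi: g\mapsto e^{-it\Delta'}g$ begins correctly — it is the same change of variable used in Corollary \ref{normeloc} — but the subsequent reduction to the classical Lions--Magenes identity is where the argument breaks. After applying $\Phi$, the endpoint spaces are $\dot{H}^{1/4}(I,L^2)$ and $\dot{H}^{5/4}(I,L^2)\cap \dot{H}^{1/4}(I,H^2)$ (with a vanishing trace condition), and you attempt to interpolate these \emph{factor by factor}, invoking $[L^2(I),H^2_0(I)]_\theta=H^{2\theta}_0(I)$. This does not follow: interpolation functors do not commute with intersections of Banach couples without additional compatibility hypotheses, so one only gets the inclusion $[A_0\cap B_0,A_1\cap B_1]_\theta\subset [A_0,A_1]_\theta\cap [B_0,B_1]_\theta$ for free, not equality. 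Moreover the temporal factors at the endpoints are $\dot H^{1/4}$ and $\dot H^{5/4}_0$, not $L^2$ and $H^2_0$, so the classical Lions--Magenes statement you cite is not literally the one needed; and the trace condition defining $\mathcal{H}^s_0$ lives in $H^{s-1/2}(\R^{d-1})$, which is a mixed-norm trace obtained from \emph{both} factors, not a trace ``on the first factor only.''

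There is a second, independent gap: your retract argument only produces the inclusion $[\mathcal{H}(I),\mathcal{H}^2_0(I)]_\theta\subset \mathcal{H}^{2\theta}_0(I)$ (extend by zero, interpolate, take the trace). The converse inclusion $\mathcal{H}^{2\theta}_0(I)\subset [\mathcal{H}(I),\mathcal{H}^2_0(I)]_\theta$ requires an extension operator from $\mathcal{H}^s(\R)$ to the interval which is simultaneously bounded at both endpoints \emph{and} lands in $\mathcal{H}^s_0$; the generic extension of Corollary \ref{normeloc}(2) does not have this last property. The paper's proof supplies exactly the missing ingredient: a reflection operator $Sg(t)=g(t)-3g(-t)+2g(-2t)$, which annihilates the trace and its first derivative at $t=0$, is bounded $\mathcal{H}^s(\R)\to\mathcal{H}^s_0(\R^+)$ at both endpoints, and satisfies $S\circ P_0=\mathrm{Id}$ on $\mathcal{H}^s_0(\R^+)$. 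With this, the whole identification is a clean retract argument on the $\mathcal{H}^s$ spaces directly — using that on the full line $\mathcal{H}^s(\R)$ is a weighted $L^2$ space in Fourier variables, so its interpolation is explicit — avoiding any need to decompose into intersections at all. To repair your argument you would either need to justify the factor-by-factor interpolation (which is false in general and nontrivial even when true) or construct an operator playing the role of $S$.
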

\begin{proof}

We only detail the case $I=\R^+$, the case of a general interval is similar.
According to corollary \ref{normeloc}, for $s\in[0,2]\setminus\{1/2\}$ the zero extension $P_0$, 
resp. the restriction $\mathcal{R}$ to $\R^+$, is a 
continuous operators $\mathcal{H}^s_0(\R^+)\rightarrow \mathcal{H}^s(\R)$, 
resp. $\mathcal{H}^s(\R)\to \mathcal{H}^s(\R^+)$, with $\mathcal{R}\circ P_0=Id$. Therefore by interpolation
\begin{equation*}
P_0\big([\mathcal{H}(\R^+),\mathcal{H}^2_0(\R^+)]_{s,2}\big)\subset \mathcal{H}^{2s}(\R),
\end{equation*}
and from the existence of traces, if $s>1/4$, for $g\in [\mathcal{H},\mathcal{H}^2_0]_{s,2}$, 
$g(0)=\lim_{0^-}P_0g(t)=0$, thus $[\mathcal{H},\mathcal{H}^2_0]_{s,2}\subset \mathcal{H}^{2s}_0(\R^+)$.
Conversely, for $g\in \mathcal{H}^s(\R)$, we define 
$$Sg:\ t\in (0,\infty)\rightarrow g(t)-3g(-t)+2g(-2t).$$ Clearly, it is continuous 
$\mathcal{H}^s(\R)\to \mathcal{H}^s(\R^+)$, and when it makes sense $Sg(0)=0$, $\partial_tSg(0)=0$
thus it is $\mathcal{H}^s_0(\R^+)$ valued.
By interpolation $S$ is continuous $\mathcal{H}^{2s}(\R)\rightarrow 
[\mathcal{H}(\R^+),\mathcal{H}^2_0(\R^+)]_{s,2}$. Now we can observe that $S\circ P_0=Id$ on 
$\mathcal{H}^s_0(\R^+)$, therefore $\mathcal{H}^{2s}_0(\R^+)\subset [\mathcal{H},\mathcal{H}^2_0]_{s,2}$ and 
the identification is complete. 
\\
If $s=1/2$, we observe that the same argument can be applied provided $P_0$ acts continuously 
$\mathcal{H}^{1/2}_{00}(\R^+)\to \mathcal{H}^{1/2}(\R)$, but this is true according to definition \ref{def00}. 
 
\end{proof}

\subsection{Interpolation spaces and composition estimates} 
In order to treat nonlinear problems, estimates in $B^s_{p,2}L^q$ require some composition estimates.

\begin{prop}
Let $A$ be a Banach space. For $0<\theta<1$,
$[L^p(\R,A),W^{1,p}(\R,A)]_{\theta,2}=B^\theta_{p,2}(\R,A)$ the fractional Besov space endowed with the norm
\begin{equation*}
\|u\|_{B^\theta_{p,2}A}^2:=\int_0^\infty \bigg(\frac{\|u(\cdot+h)-u(\cdot)\|_{A}}{h^\theta}\bigg)^2\frac{dh}{h}
+\|u\|_{L^pA}^2:=\|u\|_{\dot{B}^\theta_{p,2}}^2+\|u\|_{L^pA}^2.
\end{equation*}
\end{prop}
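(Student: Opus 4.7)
The approach is the standard $K$-method of real interpolation. I would identify the $K$-functional for the pair $(L^p(\R, A), W^{1,p}(\R, A))$ in terms of the $L^p$-modulus of continuity
\begin{equation*}
\omega_p(u, t) := \sup_{0 < h \le t}\|u(\cdot + h) - u(\cdot)\|_{L^p(\R, A)},
\end{equation*}
by establishing the equivalence
\begin{equation*}
K(t, u; L^p A, W^{1,p} A) \sim \omega_p(u, t) + \min(1, t) \|u\|_{L^p A}.
\end{equation*}

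For the upper bound I would use mollification: pick $\phi \in C_c^\infty(\R)$ with $\int \phi = 1$, set $\phi_t(x) = t^{-1}\phi(x/t)$, and decompose $u = (u - u*\phi_t) + u*\phi_t$. Minkowski together with $\int \phi = 1$ yields $\|u - u*\phi_t\|_{L^p A} \lesssim \omega_p(u, t)$, and $\int \phi' = 0$ gives $\|(u*\phi_t)'\|_{L^pA} = \|u*\phi_t'\|_{L^pA} \lesssim t^{-1}\omega_p(u, t)$; hence $t\|u*\phi_t\|_{W^{1,p}A} \lesssim \omega_p(u, t) + t\|u\|_{L^pA}$. For the lower bound, the triangle inequality gives, for any decomposition $u = u_0 + u_1$ and $0 < h \le t$,
\begin{equation*}
\|u(\cdot+h) - u\|_{L^pA} \le 2\|u_0\|_{L^pA} + h\|u_1\|_{W^{1,p}A} \le 2(\|u_0\|_{L^pA} + t\|u_1\|_{W^{1,p}A}),
\end{equation*}
so $\omega_p(u, t) \le 2 K(t, u)$; together with the trivial $K(t, u) \le \|u\|_{L^pA}$ this completes the identification. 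Crucially, nothing depends on whether $A$ is scalar or Banach valued: convolution is with a scalar kernel and all integrals make sense in the Bochner sense.

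Plugging into $\|u\|_{\theta, 2}^2 = \int_0^\infty (t^{-\theta} K(t,u))^2 \frac{dt}{t}$ and splitting at $t = 1$: the bound $K(t, u) \le \|u\|_{L^pA}$ controls the $[1, \infty)$ integral (using $\theta > 0$), and the identification controls the $(0, 1]$ integral, the residual $t\|u\|_{L^pA}$ contribution being integrable thanks to $\theta < 1$. It remains to compare $\int_0^\infty (t^{-\theta} \omega_p(u, t))^2 \frac{dt}{t}$ with the seminorm of the statement built from the pointwise difference $F(t) := \|u(\cdot+t) - u\|_{L^pA}$. One direction is immediate since $F \le \omega_p$. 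For the other, I would use the subadditivity $F(a+b) \le F(a) + F(b)$ (from translation invariance of the $L^pA$ norm), which implies that $F(t)/t$ is almost decreasing and hence $F$ is essentially constant on dyadic intervals, yielding $\omega_p(2^k) \lesssim \max_{j \le k} F(2^j)$. A dyadic summation with the geometric factor $2^{-j\theta q}$ (convergent since $\theta > 0$) completes the comparison.

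The main obstacle is this last Hardy-type step; everything else is routine. Since all operations act purely on the real variable, the argument reduces to the classical scalar Besov characterization (see \cite{berglof} Chapter 6), which transfers verbatim to the Banach-valued setting.
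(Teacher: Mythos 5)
Your proposal follows the same $K$-method framework as the paper, and the mollification estimate for the upper bound of $K(t,u)$ is essentially identical to the one in the paper (both regularize with a scalar kernel $\rho_t$ and estimate the two pieces in $L^pA$). Where you genuinely diverge is in how the $K$-functional is bounded and how the resulting weighted integral is tamed. The paper bounds $K(h)$ by $h\|u\|_{L^pA}+\frac1h\int_0^h\|u(\cdot+s)-u\|_{L^pA}\,ds$ (a running \emph{average} of $F(s):=\|u(\cdot+s)-u\|_{L^pA}$) and then disposes of the extra average by an integration-by-parts/Cauchy--Schwarz Hardy inequality, which is clean and continuous. You instead bound $K(h)$ via the running \emph{sup} $\omega_p(u,h)=\sup_{s\le h}F(s)$ and appeal to subadditivity of $F$. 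That is a valid alternative route, but your version of the hard step is sketched with imprecise intermediate claims: subadditivity of $F$ does not imply that $F$ is ``essentially constant on dyadic intervals'' (no lower bound for $F$ on a given interval follows), and the stated estimate $\omega_p(2^k)\lesssim\max_{j\le k}F(2^j)$ is not literally what subadditivity yields (binary decomposition of $h$ produces a sum over scales, not a max). A correct and shorter formulation of what you are after is
\begin{equation*}
\omega_p(u,h)\le\sum_{j\ge 1}F\bigl(h/2^j\bigr),
\end{equation*}
which follows by iterating $\omega_p(h)\le F(h/2)+\omega_p(h/2)$ (itself from $F(s)\le F(h/2)+F(s-h/2)$ for $h/2<s\le h$); then the triangle inequality plus the change of variables $s=h/2^j$ gives $\|\omega_p(\cdot)h^{-\theta}\|_{L^2(dh/h)}\le\bigl(\sum_{j\ge1}2^{-j\theta}\bigr)\|F(\cdot)h^{-\theta}\|_{L^2(dh/h)}$, with convergence because $\theta>0$. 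With that repair, your argument closes; absent it, the dyadic comparison as written is a genuine gap. As a matter of economy, the paper's Hardy-by-parts on the average $\frac1h\int_0^h F$ avoids all dyadic discretization and the subadditivity detour entirely.
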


For completeness we include a short proof in the spirit of \cite{Tartar} of this well-known result.

\begin{proof}
We use the K-method for interpolation. Let $K(h)=\inf_{u=u_0+u_1}\|u_0\|_{L^pA}+h\|u_1\|_{W^{1,p}A}$. 
If $u\in [L^p(\R,A),W^{1,p}(\R,A)]_{\theta,2}$, then for any $h\geq 0$ there exists 
$(u_0,u_1)$ with $u=u_0+u_1,\ \|u_0\|_{L^pA}+h\|u_1\|_{W^{1,p}A}\leq 2K(h)$ and 
$\|u\|_{[L^pA,W^{1,p}A]_{\theta,2}}:=(\int_0^\infty (K(h)/h^\theta)^2dh/h)^{1/2}<\infty$. The standard estimate 
$\|u_1(\cdot+h)-u_1(\cdot)\|_{L^p}\leq h\|u_1\|_{W^{1,p}}$ implies 
$$
\int_0^\infty \bigg(\frac{\|u(\cdot+h)-u(\cdot)\|_{L^pA}}{h^\theta}\bigg)^2\frac{dh}{h}\leq 4\int_0^\infty 
\bigg(\frac{K(h)}{h^\theta}\bigg)^2\frac{dh}{h}.
$$
Conversely, assume the left hand side of the equation above is finite and $u\in L^pA$. 
For $h>0$, $\rho_h=\rho(\cdot/h)/h$ with $\rho\in C_c^\infty,\ \rho\geq 0,\ \int \rho=1$, 
$\text{supp}(\rho)\subset [-1,1]$,
we set $u_0=u-\rho_h*u, u_1=\rho_h*u$. Minkowski's inequality gives
\begin{equation*}
\|u-\rho_h*u\|_{L^pA}\leq \int_{-h}^h\rho_h(s)\|u(\cdot)-u(\cdot-s)\|_{L^pA}ds\lesssim \frac{1}{h}\int_0^h
\|u(\cdot+s)-u(\cdot)\|_{L^pA}ds,
\end{equation*}
\begin{equation*}
\|(\rho_h)'*u\|_{L^p A}\leq \int_{-h}^h\|\rho_h'(s)\big(u(\cdot-s)-u(\cdot)\big)\|_{L^pA}ds
\lesssim \frac{1}{h^2}\int_0^h\|u(\cdot+s)-u(\cdot)\|_{L^pA}ds,
\end{equation*}
therefore $K(h)\leq \|u-\rho_h*u\|_{L^pA}+h\|\rho_h*u\|_{W^{1,p}A}\lesssim h\|u\|_{L^pA}+\frac{1}{h}
\int_0^h\|u(\cdot+s)-u(\cdot)\|_{L^pA}ds$. Also, it is obvious that for $h\geq 1$, $K(h)\leq \|u\|_{L^p}$. 
By integration
\begin{equation*}
\int_0^\infty  \bigg(\frac{K(h)}{h^\theta}\bigg)^2\frac{dh}{h}\lesssim \|u\|_{L^pA}^2+\int_0^1
\bigg(\int_0^h\|u(\cdot+s)-u(\cdot)\|_{L^pA}ds\bigg)^2\frac{dh}{h^{3+2\theta}}.
\end{equation*}
We set $f(h)=\|u(\cdot+h)-u(\cdot)\|_{L^pA},\ F(h)=\int_0^hfds$. An 
integration by parts and Cauchy-Schwarz's inequality gives 
\begin{eqnarray*}
\int_0^\infty\big(F(h)\big))^2\frac{dh}{h^{3+2\theta}}
&= &\frac{2}{2+2\theta}\int_0^\infty f(h)F(h)\frac{dh}{h^{2+2\theta}}\\
&\leq& \frac{2}{2+\theta}\bigg(\int_0^\infty \bigg(\frac{f(h)}{h^\theta}\bigg)^2\frac{dh}{h}\bigg)^{1/2}
\bigg(\int_0^\infty F(h)^2\frac{dh}{h^{3+2\theta}}\bigg)^{1/2},
\end{eqnarray*}
from which we deduce $\displaystyle \int_0^\infty\bigg(\frac{K(h)}{h^\theta}\bigg)^2\frac{dh}{h}
\lesssim \|u\|_{L^pA}^2+\int_0^\infty\bigg(\frac{\|u(\cdot+h)-u(\cdot)\|_{L^pA}}{h^\theta}\bigg)^2\frac{dh}{h}$.
\end{proof}

\begin{prop}\label{compoFrac}
Let $F:\ \C\to \C$ such that $|F(u)|\lesssim |u|^a,\ |F'(u)|\lesssim |u|^{a-1}$, $a>1$. Then 
\begin{equation*}
\|F(u)\|_{B^s_{p,2}(\R_t,L^q)}\lesssim \|u\|_{L^{p_1}(\R_t,L^{q_1})}^{a-1}\|u\|_{B^s_{p_2,2}(\R_t,L^{q_2})},
\end{equation*}
with 
\begin{equation*}
p_1,q_1,p_2,q_2\geq 1,\ \frac{1}{q}=\frac{a-1}{q_1}+\frac{1}{q_2},\ \frac{1}{p}=\frac{a-1}{p_1}+\frac{1}{p_2}.
\end{equation*}
\end{prop}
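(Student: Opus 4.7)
The plan is to exploit the characterization of $B^s_{p,2}(\R_t,L^q)$ as a Gagliardo-Slobodeckij space, provided by the previous proposition with $A=L^q$: for $0<s<1$,
$$\|v\|_{B^s_{p,2}L^q}^2\sim \|v\|_{L^pL^q}^2+\int_0^\infty\Bigl(\frac{\|v(\cdot+h)-v(\cdot)\|_{L^pL^q}}{h^s}\Bigr)^2\frac{dh}{h}.$$
Thus it suffices to control the two pieces separately for $v=F(u)$, and the whole argument reduces to a careful bookkeeping of Hölder exponents using the scaling relations $\frac{1}{p}=\frac{a-1}{p_1}+\frac{1}{p_2}$, $\frac{1}{q}=\frac{a-1}{q_1}+\frac{1}{q_2}$.

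For the $L^pL^q$ piece I would write $|F(u)|\lesssim |u|^a=|u|^{a-1}\cdot|u|$ and apply Hölder twice (first in $x$ with exponents $\frac{q_1}{a-1}$ and $q_2$, then in $t$ with exponents $\frac{p_1}{a-1}$ and $p_2$) to obtain
$$\|F(u)\|_{L^pL^q}\lesssim \|u\|_{L^{p_1}L^{q_1}}^{a-1}\,\|u\|_{L^{p_2}L^{q_2}}\lesssim \|u\|_{L^{p_1}L^{q_1}}^{a-1}\,\|u\|_{B^s_{p_2,2}L^{q_2}}.$$

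For the Gagliardo piece I would use the hypothesis $|F'|\lesssim|\cdot|^{a-1}$ and the fundamental theorem of calculus to get the pointwise bound
$$|F(u(x,t+h))-F(u(x,t))|\lesssim \bigl(|u(x,t+h)|+|u(x,t)|\bigr)^{a-1}\,|u(x,t+h)-u(x,t)|.$$
Applying Hölder in the same way and using the translation invariance of the $L^{p_1}L^{q_1}$ norm yields
$$\|F(u(\cdot+h))-F(u(\cdot))\|_{L^pL^q}\lesssim \|u\|_{L^{p_1}L^{q_1}}^{a-1}\,\|u(\cdot+h)-u(\cdot)\|_{L^{p_2}L^{q_2}}.$$
Dividing by $h^s$, squaring, and integrating in $dh/h$ pulls the factor $\|u\|_{L^{p_1}L^{q_1}}^{2(a-1)}$ outside and leaves exactly $\|u\|_{\dot{B}^s_{p_2,2}L^{q_2}}^2$ on the right. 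Summing the two estimates delivers the claim for $s\in(0,1)$. The endpoints $s=0$ and $s=1$ (where $B^s_{p,2}=L^p$ or $W^{1,p}$) follow respectively from the same Hölder bound, or by differentiating $F(u)$ and applying Hölder to $F'(u)\,\partial_tu$ together with the $L^pL^q$ estimate.

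The main obstacle, and really the only delicate point, is setting up the Hölder exponents consistently: the nonlinearity contributes $a$ factors of $u$, and one has to allocate $a-1$ of them to the $L^{p_1}L^{q_1}$ norm (where the regularity is only Lebesgue) and a single factor to the $L^{p_2}L^{q_2}$ slot (which will eventually be upgraded to a Besov difference quotient). Once this split is made, all the manipulations are routine consequences of Hölder, translation invariance, and the interpolation-based characterization of $B^s_{p,2}L^q$.
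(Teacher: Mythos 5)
Your proof is correct and follows essentially the same route as the paper: split the $B^s_{p,2}L^q$ norm into its $L^pL^q$ part and its Gagliardo seminorm, use the FTC bound $|F(u(\cdot+h))-F(u(\cdot))|\lesssim(|u(\cdot+h)|+|u(\cdot)|)^{a-1}|u(\cdot+h)-u(\cdot)|$, then apply H\"older and translation invariance of $\|u\|_{L^{p_1}L^{q_1}}$. The only cosmetic difference is that the paper writes the prefactor as $|u(\cdot+h)|^{a-1}+|u(\cdot)|^{a-1}$ rather than $(|u(\cdot+h)|+|u(\cdot)|)^{a-1}$, which are comparable for $a>1$; you also spell out the endpoints $s\in\{0,1\}$, which the paper leaves implicit.
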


\begin{proof}
The $L^p_tL^q$ part of the norm is simply estimated with H\"older's inequality on $|u|^{a-1}\times |u|$.
For the $\dot{B}^s_{p,2}$ part, let $1/p=1/p_3+1/p_2,\ 1/q=1/q_3+1/q_2$:
\begin{eqnarray*}
\int_0^\infty &\bigg(&\frac{\|F(u)(\cdot+h)-F(u)(\cdot)\|_{L^p_tL^q}}{h^s}\bigg)^2\displaystyle\frac{dh}{h}\\
&\lesssim &\int_0^\infty \bigg(\frac{\|(|u(\cdot+h)|^{a-1}+|u(\cdot)|^{a-1})|u(\cdot+h)-u(\cdot)|
\|_{L^p_tL^q}}{h^s}\bigg)^2\frac{dh}{h}\\
&\lesssim &\int_0^\infty \bigg(\frac{\|(u^{a-1}\|_{L^{p_3}_tL^{q_3}}\|u(\cdot+h)-u(\cdot)\|_{L^{p_2}_tL^{q_2}}}{h^s}\bigg)^2
\frac{dh}{h}\\
&=&\|u\|_{L^{p_1}_tL^{q_1}}^{2(a-1)}\int_0^\infty \bigg(\frac{\|u(\cdot+h)-u(\cdot)\|_{L^{p_2}_tL^{q_2}}}{h^s}\bigg)^2
\frac{dh}{h}\\
&\leq&\|u\|_{L^{p_1}_tL^{q_1}}^{a-1}\|u\|_{B^s_{p_2,2}L^{q_2}}^2.
\end{eqnarray*}
\end{proof}


Finally, as the nonlinear problems require to construct local solutions, we shall use the following extension lemma. 

\begin{lemma}\label{extOp}
Let $p\geq 1,\ 0<s<1$ with $sp>1$, $A$ a Banach space. For any $0<T\leq 1$, there exists an extension operator $P_T:\ 
B^s_{p,2}([0,T],A)\to B^s_{p,2}(\R_t,A)$ such that $P_Tu(\cdot,t)=0$ if $t\notin [-T,2T]$ and 
(with constants unbounded as $sp\to 1$)
\begin{equation}\label{actionPT}
\left\{
\begin{array}{ll}
 \|P_Tu\|_{L^{p}(\R_t,A)}\lesssim \|u\|_{L^{p}([0,T],A)},\\
 \|P_Tu\|_{B^s_{p,2}(\R_t,A)}\lesssim T^{1/p-s}\|u\|_{B^s_{p,2}([0,T],A)}.
\end{array}\right.
 \end{equation}
\end{lemma}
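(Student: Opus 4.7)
I will build $P_Tu$ by reflecting $u$ across the endpoints $0$ and $T$ and multiplying the resulting function on $[-T,2T]$ by a smooth bump of scale $T$. Fix once and for all $\chi \in C_c^\infty(\R)$ with $\chi \equiv 1$ on $[0,1]$, $\operatorname{supp}\chi \subset [-1,2]$, $0\leq \chi \leq 1$, set $\chi_T(t) := \chi(t/T)$, and define $\tilde u(t) = u(t)$ on $[0,T]$, $\tilde u(t) = u(-t)$ on $[-T,0]$, $\tilde u(t) = u(2T-t)$ on $[T,2T]$ and $\tilde u(t) = 0$ otherwise; set $P_Tu := \chi_T \tilde u$. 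By construction $P_Tu = u$ on $[0,T]$, $\operatorname{supp} P_Tu \subset [-T,2T]$, and the smooth vanishing of $\chi_T$ at $-T$ and $2T$ absorbs the jumps of $\tilde u$. The $L^p$ bound is immediate from $|\chi_T|\leq 1$ and the reflection identity:
\begin{equation*}
\|P_Tu\|_{L^p(\R,A)}^p \leq \int_{-T}^{2T}\|\tilde u(t)\|_A^p\,dt = 3\|u\|_{L^p([0,T],A)}^p.
\end{equation*}

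\textbf{Besov bound.} For the seminorm I estimate $\int_0^\infty h^{-2s}\|P_Tu(\cdot+h)-P_Tu(\cdot)\|_{L^p(\R,A)}^2\,dh/h$ by splitting the range of $h$. In the interior regime $h \leq 3T$, away from the endpoints of $[-T,2T]$, the product decomposition $P_Tu(t+h)-P_Tu(t) = \chi_T(t+h)[\tilde u(t+h)-\tilde u(t)] + [\chi_T(t+h)-\chi_T(t)]\tilde u(t)$ bounds this part by $[\tilde u]_{\dot{B}^s_{p,2}([-T,2T],A)}^2 + \|u\|_{L^\infty([0,T],A)}^2\,[\chi_T]_{\dot{B}^s_{p,2}(\R)}^2$. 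The reflection is continuous $B^s_{p,2}([0,T]) \to B^s_{p,2}([-T,2T])$ with $T$-independent constant (standard for $0<s<1$), so the first term is $\lesssim \|u\|_{B^s_{p,2}([0,T])}^2$. The change of variable $h = T\mu$ in the defining integral gives $[\chi_T]_{\dot{B}^s_{p,2}(\R)} \simeq T^{1/p-s}[\chi]_{\dot{B}^s_{p,2}(\R)}$, and since $sp>1$ the Besov embedding $B^s_{p,2}(\R) \hookrightarrow L^\infty$ applied to any extension of $u$ yields $\|u\|_{L^\infty([0,T])} \lesssim \|u\|_{B^s_{p,2}([0,T])}$ in the restriction norm, so the second contribution is $\lesssim T^{2(1/p-s)}\|u\|_{B^s_{p,2}([0,T])}^2$. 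The boundary-jump regime, where only one of $P_Tu(t)$ and $P_Tu(t+h)$ is nonzero, is handled using $|\chi_T| \lesssim h/T$ near the vanishing points, producing again a contribution $\lesssim T^{2(1/p-s)}\|u\|_{L^\infty}^2$. Finally, when $h > 3T$ the translate has disjoint support so the increment is controlled by $\|P_Tu\|_{L^p}$; the factor $\int_{3T}^\infty h^{-2s-1}\,dh \lesssim T^{-2s}$ combined with $\|u\|_{L^p([0,T])} \lesssim T^{1/p}\|u\|_{L^\infty([0,T])}$ delivers the missing $T^{2/p}$ and yields again $T^{2(1/p-s)}\|u\|_{B^s_{p,2}([0,T])}^2$. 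Summing, $[P_Tu]_{\dot{B}^s_{p,2}(\R,A)} \lesssim T^{1/p-s}\|u\|_{B^s_{p,2}([0,T],A)}$, which combined with the $L^p$ bound (and $T^{1/p-s} \geq 1$, valid because $T\leq 1$ and $sp>1$) gives the second asserted inequality.

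\textbf{Main obstacle.} The delicate step is the rigorous treatment of the jumps that the zero-extended $\tilde u$ has at $\pm T$ and $2T$, which render a naive product decomposition ill-defined in $B^s_{p,2}(\R)$. The resolution is exactly the boundary-jump case above: near those points $\chi_T$ is of order $h/T$, so the jump part of the increment is weighted by $(h/T)^p$, and the integral against $h^{-2s-1}\,dh$ is convergent precisely because $sp>1$. The same hypothesis enters through the Besov embedding into $L^\infty$ and is the source of the blow-up of the constants as $sp\to 1$.
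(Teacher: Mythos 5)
Your construction of $P_T$ (reflection of $u$ across both endpoints, multiplied by a cutoff of scale $T$) is essentially the same operator the paper builds (there the cutoff is folded into the reflection formula and the scale $T$ is handled by conjugating with the dilation $D_T$), but your method of estimation is genuinely different. The paper proves boundedness of the unit-scale operator $P_1$ by checking $L^p\to L^p$ and $W^{1,p}\to W^{1,p}$ and then invoking real interpolation to get $B^s_{p,2}\to B^s_{p,2}$, after which the factor $T^{1/p-s}$ comes out of the exact scaling of the homogeneous seminorm under the dilation $D_T$, combined with the Sobolev embedding $\|u\|_{L^\infty([0,T])}\lesssim\|u\|_{B^s_{p,2}([0,T])}$ to control the inhomogeneous part. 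You instead estimate the Gagliardo-type seminorm of $P_Tu$ directly, splitting the $h$-integral into an interior regime (Leibniz decomposition, scaling of $[\chi_T]_{\dot B^s_{p,2}}$, boundedness of the reflection on $B^s_{p,2}$), a boundary-jump regime, and a disjoint-support regime $h>3T$. Your route is more elementary (no interpolation functors) but more computational; the paper's route is shorter once one accepts the interpolation identity $[L^p,W^{1,p}]_{s,2}=B^s_{p,2}$, which the paper proves just above this lemma. Both arguments hinge in the same way on the embedding $B^s_{p,2}\hookrightarrow L^\infty$ for $sp>1$, which is what produces the $T^{1/p-s}$.

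One imprecision worth correcting in your write-up: you say the boundary-jump integral "is convergent precisely because $sp>1$." That is not where the hypothesis enters. With the bound $|\chi_T(t+h)|\lesssim h/T$ near the support endpoint, the $L^p_t$ norm over the jump region (of length $\sim h$) is $\lesssim (h^{1+1/p}/T)\|u\|_{L^\infty}$, and the resulting integral $\int_0^{T} h^{1+2/p-2s}\,dh/T^2$ converges at $0$ as soon as $s<1+1/p$, i.e., for every admissible $s$. The hypothesis $sp>1$ is actually used in two other places: it makes the $L^\infty$ control $\|u\|_{L^\infty([0,T])}\lesssim\|u\|_{B^s_{p,2}([0,T])}$ available with a $T$-independent constant, and it guarantees $T^{1/p-s}\ge 1$ so the reflection term (which produces $\|u\|_{B^s_{p,2}([0,T])}$ with no power of $T$) is absorbed into the final bound. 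It is true, on the other hand, that \emph{without} the linear vanishing of $\chi_T$ at $\pm T$ the boundary-jump integral would diverge precisely when $sp>1$, so the cutoff's vanishing is essential; perhaps that is the phenomenon you had in mind. Finally, the assertion that the reflection is bounded $B^s_{p,2}([0,T])\to B^s_{p,2}([-T,2T])$ with a $T$-independent constant is correct but deserves a word: the cleanest justification is to check it on $W^{s_0,p}$ and $W^{s_1,p}$ (where the Gagliardo double-integral characterization makes the bounded-multiplicity change of variables transparent) and interpolate, which is incidentally the same interpolation device the paper uses for $P_1$ itself.
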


\begin{proof}
We fix $\chi\in C_c^\infty([0,1[)$, $\chi(0)=1$, and define the operator 
\begin{equation*}
P_1:\ B^s_{p,2}([0,1],A)\to B^s_{p,2}(\R,A)
u\mapsto 
\left\{
\begin{array}{ll}
u(t),\ 0\leq t\leq 1,\\
u(2-t)\chi(t-1),\ 0\leq t\leq 2,\\
u(-t)\chi(-t),\ -1\leq t\leq 0,\\
0,\ \text{else}.
\end{array}
\right.
\end{equation*}
It is not difficult to check that $P_1$ is bounded $L^p([0,1],A)\to L^p_tA$, 
$W^{1,p}([0,1],L^q)\to W^{1,p}_tL^q$, with bounds independent of $p$, thus it is also bounded 
$B^s_{p,2}([0,1],A)\to B^s_{p,2}(\R,A)$.
Let $D_\lambda$ be the dilation operator $D_\lambda:u\mapsto u(\cdot,\lambda \cdot)$, we set 
\begin{equation*}
P_T=D_{1/T} \circ P_1 \circ D_T.
\end{equation*}
From a direct computation, $\|P_Tu\|_{L^p_tA}\leq 3\|\chi\|_\infty \|u\|_{L^p_TA}$, thus we are left to prove the 
second inequality in \eqref{actionPT}.\\
As $sp>1$, by Sobolev's embedding 
$\|D_Tu\|_{L^\infty ([0,1],A)}\lesssim \|u\|_{B^s_{p,2}([0,T],A)}$ thus
\begin{equation*}
\|P_1D_Tu\|_{L^p_tL^q}\lesssim \|P_1D_Tu\|_{L^\infty A}\lesssim \|D_Tu\|_{L^\infty([0,1],A)}
\lesssim \|u\|_{B^s_{p,2}([0,T],L^q)} .
\end{equation*}
 On the other hand  for $v$ an extension of $u$, basic computations give
\begin{equation*}
\|D_Tv\|_{\dot{B}^s_{p,2}L^q}=\int_0^\infty \frac{\|(D_Tv)(t+h)-(D_Tv)(t)\|_{L^p_tL^q}^2}{h^{1+2s}}dh\leq  
T^{2s-2/p}\|v\|_{B^s_{p,2}}^2,
\end{equation*}
thus for $T\leq 1$, $\|P_1D_Tu\|_{B^s_{p,2}(\R_t,L^q)}\lesssim \|u\|_{B^s_{p,2}([0,1],L^q)}$, from which we get with the same scaling 
argument $\|D_{1/T}P_1D_Tu\|_{B^s_{p,2}(\R_t,L^q)}\lesssim T^{1/s-1/p}\|u\|_{B^s_{p,2}([0,1])}$.
\end{proof}

\section{Linear estimates}\label{linestim}
The plan to solve \eqref{IBVP} is based on a superposition principle: let us denote abusively $u_0$ an extension 
of $u_0$ to $\R^d$. If we can solve the Cauchy problem
\begin{equation*}
\left\{
\begin{array}{ll}
i\partial_tv+\Delta v=f,\\
v|_{t=0}=u_0,
\end{array}
\right. (x,y,t)\in \R^d\times \R.
\end{equation*}
and the boundary value problem
\begin{equation}\label{BVPaux2}
\left\{
\begin{array}{ll}
i\partial_tw+\Delta w=0,\\
w|_{t=0}=0,\\
B(w|_{y=0},\partial_yw|_{y=0})=g-B(v|_{y=0},\partial_yv|_{y=0}),
\end{array}
\right. (x,y,t)\in \R^{d-1}\times \R^+\times \R^+.
\end{equation}
then $v|_{y\geq 0}+w$ is the solution to \ref{IBVP}. For this strategy to be fruitful we need a number of results: Strichartz 
estimates for $v$, trace estimates for $v|_{y=0},\partial_yv|_{y=0}$, existence and Strichartz estimates for $w$.
This is the program that we follow through section $3$.

\subsection{The pure boundary value problem}
Consider the linear boundary value problem 
\begin{equation}\label{BVP}
 \left\{
 \begin{array}{ll}
  i\partial_tu+\Delta u=0,\\
  B(u|_{y=0},\partial_{y}u|_{y=0})=g,\\
  u(\cdot,0)=0.
 \end{array}
\right.
\end{equation}
We use the following notion of solution (slightly stronger than definition \ref{defsol}):
\begin{define}\label{defsol2}
Let $g\in \mathcal{H}^s_0(\R^+)$. 
We say that $u$ is a solution of the BVP \eqref{BVP} if $u\in C(\R^+,H^s)$, 
there exists a sequence $g_n\in \cap_{k\geq 0}H^k_0(\R^{d-1}\times \R_t^+)$ with 
$\|g-g_n\|_{\mathcal{H}^s}\to_n 0$ and smooth solutions $u_n\in C^\infty(\R^+, \cap_{k\geq 0}H^k)$ of \eqref{BVP} 
with boundary data $g_n$ such that $\|u-u_n\|_{L^\infty H^s}\to_n 0$.
\end{define}
\paragraph{The Kreiss-Lopatinskii condition}
We recall the notation of 
the introduction
\begin{equation*} 
\mathcal{L}(B(a,b))(\xi,\tau)=b_1\mathcal{L}a(\xi,\tau)+b_2\mathcal{L}b(\xi,\tau),
\end{equation*}
with $b_1,b_2$ anisotropically homogeneous: $b_1(\lambda\xi,\lambda^2\tau)=b_1(\xi,\tau),\ b_2(\lambda \xi,\lambda^2\tau)
=\lambda^{-1}b_2(\xi,\tau)$.
Of course, the operator $B$ must satisfy some conditions. 
First of all, it should be defined independently 
of $\text{Re}(\tau):=\gamma>0$, so according to Paley-Wiener's theorem we assume that $b_1,b_2$ are holomorphic in 
$\tau$ on $\{(\tau,\xi)\in \C\times \R^{d-1},\ \text{Re}(\tau)>0\}$. Moreover we assume that $b_1$ extends continuously 
on $\{(i\delta,\xi)\in (\R\times \R^{d-1})\setminus\{0\}\}$, and a.e. in $(\delta,\xi)$, 
$\displaystyle\lim_{\gamma\to 0}b_2(\xi,\gamma+i\delta)$ exists.\\
The Kreiss-Lopatinskii condition is an algebraic condition that we introduce with the following heuristic:
assume that the solution $u$ belongs to $C_b(\R_t^+,\mathcal{S}(\R^{d-1}\times \R^+))$, and consider its Fourier-Laplace 
transform $\mathcal{L}u(\xi,y,-i\tau)=\iint e^{-\tau t+i\xi x}u(x,y,t)dx\,dt$. Then $\mathcal{L}u$ 
satisfies
\begin{equation*}
\partial_y^2\mathcal{L}u=(|\xi|^2-i\tau )\mL u.
\end{equation*}
The condition $\lim_{y\to \infty} \mathcal{L}u(y)=0$ imposes
\begin{equation}\label{solEDO}
\mL u=e^{-\sqrt{|\xi|^2-i\tau}y}\mL u(y=0).
\end{equation}
Here,  $\sqrt{\cdot}$ is the square root defined on $\C\setminus i\R^+$ such that $\sqrt{-1}=-i$. 
From \eqref{solEDO}, the condition $B(u|_{y=0},\partial_yu|_{y=0})=g$ rewrites 
$(b_1-\sqrt{|\xi|^2-i\tau}b_2)\mL u(0)=\mL g$, so that $\mL u(0)$ is uniquely determined from $\mathcal{L}g$ 
if 
\begin{equation}\label{UKL}
\exists\,\alpha,\beta>0:\ \forall\,(\gamma,\delta,\xi)\in \R^{+}\times \R\times \R^{d-1},\
\alpha\leq \bigg|\begin{pmatrix}b_1\\b_2\end{pmatrix}
\cdot \begin{pmatrix}1 \\ -\sqrt{|\xi|^2-i\tau}\end{pmatrix}\bigg|\leq \beta.
\end{equation}
\begin{define}
$B$ satisfies the (generalized) Kreiss-Lopatinskii condition if \eqref{UKL} is true.
\vspace{2mm}
\end{define}
\noindent
By homogeneity $b_1$ is uniformly bounded, thus \eqref{UKL} implies that 
$b_2\sqrt{|\xi|^2-i\tau}$ is uniformly bounded for $\text{Re}(\tau)\geq 0$, although $b_2$ may be infinite at some 
points $(\xi,i\delta)$. The vector
$V_-:=\begin{pmatrix}1 \\ -\sqrt{|\xi|^2-i\tau}\end{pmatrix}$ is the so-called stable eigenvector, 
and algebraically \eqref{UKL} means that the symbol of $B$, as a linear operator $\C^2\to \C$, defines an isomorphism 
$\text{span}(V_-)\to \C$.\\ 
Obviously, the Dirichlet boundary condition $b_D:=(1,0)$ satisfies the uniform Kreiss Lopatinskii condition.
It is also possible to include the Neuman boundary condition as well as the transparent boundary 
condition into this framework by setting 
\begin{eqnarray}
\label{BNeuman}
\mathcal{L}B_{N}(a,b)&=&\frac{\mathcal{L}b(\xi,\tau)}{\sqrt{|\xi|^2-i\tau}}\text{ (Neuman)},\\
\label{Btransparent}
\mathcal{L}B_{T}(a,b)&=&\mathcal{L}a(\xi,\tau)-\frac{\mathcal{L}b(\xi,\tau)}{\sqrt{|\xi|^2-i\tau}}\text{ (Transparent)}.
\end{eqnarray}
With this convention, $b_N\cdot V_-=-1$ and $b_T\cdot V_-=2$, so that both satisfy the Kreiss-Lopatinskii condition. 
Let us point out that in the case of Neuman boundary conditions, $B_N(u,\partial_yu)\in \mathcal{H}$ is equivalent to 
$\partial_yu|_{y=0}\in \mathcal{H}'$, indeed 
\begin{equation}\label{traceNeuman}
\|P_0g\|_{\mathcal{H}(\R)}^2=
\int_{\R^d} \frac{|\mathcal{L}(\partial_yu|_{y=0})|^2}{||\xi|^2+\delta|}\sqrt{||\xi|^2+\delta|}d\xi d\delta
=\|P_0\partial_yu|_{y=0}\|^2_{\mathcal{H}'} .
\end{equation}
\paragraph{The Kreiss-Lopatinskii condition and the backward BVP} \label{WPbackwards}
For general boundary conditions, the boundary 
value problem is not alwats reversible. Indeed if we solve 
\eqref{BVP} for $t\leq 0$, $g$ supported in $\R_t^-$, the parameter $\gamma$ in the Laplace transform is negative 
therefore the appropriate square root in formula \eqref{solEDO} is defined 
on $\C\setminus i\R^-$, and maps $-1$ to $i$. Let us denote it $\text{sq}$. 
Even if we dismiss analyticity issues, there is no reason that ``backward \eqref{UKL}'' stands
\begin{equation}
\exists\,\alpha,\beta>0:\ \forall\,(\gamma,\delta,\xi)\in \R^{-}\times \R\times \R^{d-1},\
\alpha\leq \bigg|\begin{pmatrix}b_1\\ b_2\end{pmatrix}\cdot \begin{pmatrix}1 \\ -\text{sq}(|\xi|^2-i\tau)\end{pmatrix}
\bigg|\leq \beta.
\end{equation}
For example, take the forward transparent  boundary condition $(b_1,b_2)=(1,\frac{-1}{\sqrt{|\xi|^2+\delta}})$, then 
\begin{equation*}
\forall\,(\xi,\delta)\text{ such that }|\xi|^2+\delta<0,\ 
\begin{pmatrix}
1\\
\dfrac{-i}{\sqrt{||\xi|^2+\delta|}}
\end{pmatrix}\cdot 
\begin{pmatrix}
 1\\
 -i\sqrt{||\xi|^2+\delta|}
\end{pmatrix}
=0,
\end{equation*}
and therefore the backward Kreiss-Lopatinskii condition fails in the region $\{|\xi|^2+\delta<0\}$.
Note however that the Kreiss-Lopatinskii condition is true for the backward Dirichlet boundary value problem. 
It is also true for the Neuman boundary value problem provided we choose 
$(b_1,b_2)=(0,1/\text{sq}(|\xi|^2-i\tau))$ instead of $(b_1,b_2)=(0,1/\sqrt{|\xi|^2-i\tau})$.
The fact that the BVP with transparent boundary condition is not reversible is rather natural: the dissipation 
due to waves going out of the domain prevents to go back in time.
\paragraph{Well-posedness} The main result of this section states that theorem \ref{thtrivial} is true in the case 
of the pure BVP.
\begin{prop}\label{WPBVP}
If $B$ satisfies the Kreiss-Lopatinskii condition \eqref{UKL}, and 
$g\in \mathcal{H}^s_0(\R^+)$, $0 \leq s\leq 2$ $(\mathcal{H}^{1/2}_{00}$ if $s=1/2$), 
the problem \eqref{BVP} has a unique solution. 
Moreover it satisfies\footnote{We recall our unusual notation $B^1_{p,2}:=W^{1,p}$, $B^{2}_{q,2}:=W^{2,q}$}
\begin{eqnarray}
\label{strichartzBVP}
\text{for } 0\leq s\leq 2,\ \frac{2}{p}+\frac{d}{q}=\frac{d}{2},\ p>2,\ 
\|u\|_{L^p(\R^+,B^{s}_{q,2})\cap B^{s/2}_{p,2}(\R^+,L^q)}\lesssim  \|g\|_{\mathcal{H}^s_0(\R^+)}.
\end{eqnarray}
\end{prop}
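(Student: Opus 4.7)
The approach is to reduce the general boundary condition to the Dirichlet case via the Kreiss--Lopatinskii condition, and then establish Strichartz estimates for the Dirichlet problem using an explicit Fourier representation. The condition \eqref{UKL} says precisely that $M(\xi, \tau) := 1/(b_1 - \sqrt{|\xi|^2 - i\tau}\, b_2)$ is bounded above and below on $\{\mathrm{Re}(\tau) \geq 0\}$. Given $g \in \mathcal{H}^s_0(\R^+)$, I would define $\tilde g$ by $\widehat{\tilde g}(\xi, \delta) = M(\xi, -i\delta)\, \widehat g(\xi, \delta)$. Since $M$ is bounded, $\|\tilde g\|_{\mathcal{H}^s} \lesssim \|g\|_{\mathcal{H}^s}$ is immediate from the definition of the norm; the holomorphy of $M$ in $\tau$ on $\mathrm{Re}(\tau) > 0$ together with Paley--Wiener guarantees that $\tilde g$ remains causal, and the continuity of traces in proposition \ref{structureH} yields the compatibility conditions needed for $\tilde g \in \mathcal{H}^s_0(\R^+)$ (or $\mathcal{H}^{1/2}_{00}$ if $s = 1/2$). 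If $u$ solves the Dirichlet problem with data $\tilde g$, then by \eqref{solEDO} $\mathcal{L}(\partial_y u|_{y=0}) = -\sqrt{|\xi|^2 - i\tau}\, \mathcal{L}\tilde g$, so
\begin{equation*}
\mathcal{L}\bigl(B(u|_{y=0}, \partial_y u|_{y=0})\bigr) = \bigl(b_1 - \sqrt{|\xi|^2 - i\tau}\, b_2\bigr)\, \mathcal{L}\tilde g = \mathcal{L} g,
\end{equation*}
and $u$ also solves \eqref{BVP}.

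It therefore suffices to prove the proposition for Dirichlet boundary conditions, where the solution admits the explicit Fourier integral representation
\begin{equation*}
u(x, y, t) = (2\pi)^{-d} \int_{\R^{d-1} \times \R} e^{i x \cdot \xi + i \delta t - \sqrt{|\xi|^2 - i\delta}\, y}\, \widehat{\tilde g}(\xi, \delta)\, d\xi\, d\delta
\end{equation*}
obtained by taking $\mathrm{Re}(\tau) \to 0^+$ in the Fourier--Laplace resolution \eqref{solEDO}. Direct computation shows this $u$ satisfies $i\partial_t u + \Delta u = 0$, $u|_{y=0} = \tilde g$, and $u|_{t=0} = 0$ by causality of $\tilde g$.

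The core of the proof is the Strichartz estimate. The key tool is the Bourgain-type identification from corollary \ref{normeloc},
\begin{equation*}
\|\tilde g\|_{\mathcal{H}^s(\R)} \sim \|e^{-it\Delta'} \tilde g\|_{\dot H^{(1+2s)/4}(\R, L^2) \cap \dot H^{1/4}(\R, H^s)},
\end{equation*}
which via the change of variables $\mu = \delta - |\xi|^2$ identifies $\tilde g$ with the $\{y = 0\}$ trace of a $(d{-}1)$-dimensional Schr\"odinger solution, allowing us to leverage the free Cauchy Strichartz estimate \eqref{strichartzCauchy}. I would decompose the $(\xi,\delta)$ domain with a smooth cutoff into an ``elliptic'' zone $\{|\xi|^2 + \delta < 0\}$ and a ``hyperbolic'' zone $\{|\xi|^2 + \delta > 0\}$. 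In the elliptic zone, $\mathrm{Re}\sqrt{|\xi|^2 - i\delta} \gtrsim \sqrt{|\xi|^2 + |\delta|}$, so the exponential $y$-decay allows pointwise-in-$y$ control by the trace bound followed by Minkowski. In the hyperbolic zone the integrand is genuinely oscillatory and $u$ behaves essentially as the restriction to $\{y > 0\}$ of a full-space Cauchy solution, to which the standard Strichartz estimate applies after a $TT^*$ reduction. A Littlewood--Paley decomposition combined with the anisotropic scaling invariance of $\mathcal{H}^s$ then allows one to sum dyadic blocks and obtain the estimate uniformly in time.

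The hard part will be controlling the transition region $\{|\xi|^2 + \delta \approx 0\}$, where neither decay in $y$ nor dispersion in $(x,t)$ is directly available; here $\sqrt{|\xi|^2 - i\delta}$ has comparable real and imaginary parts and a careful stationary-phase or $TT^*$ computation is needed, in the spirit of the sharp Kato smoothing recalled in the introduction. Uniqueness at the smooth level is immediate from the explicit formula, and for $g \in \mathcal{H}^s_0$ follows from the approximation scheme of definition \ref{defsol2}: choosing $g_n \in \cap_k H^k_0$ converging to $g$ in $\mathcal{H}^s_0$, the Strichartz bound makes $u_n$ Cauchy in $C_t H^s \cap L^p B^s_{q,2} \cap B^{s/2}_{p,2} L^q$, and the limit is the desired solution.
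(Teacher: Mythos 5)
Your reduction to the Dirichlet case via the Kreiss--Lopatinskii condition is exactly what the paper does: multiply $\mathcal{L}g$ by the bounded-above-and-below factor $1/(b\cdot V_-)$, invoke Paley--Wiener to preserve causality, and note that the $\mathcal{H}^s$ norm changes only by a constant. The explicit Fourier representation of the Dirichlet solution is also correct. But your plan for the Strichartz estimate itself contains a genuine gap, and in fact you have swapped which piece is easy and which is hard.

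After the change of variables $\delta+|\xi|^2 = \pm\eta^2$ the solution splits sharply into $u_1$ (oscillatory piece, $\delta<-|\xi|^2$) and $u_2$ (evanescent piece, $\delta>-|\xi|^2$, carrying the factor $e^{-y\eta}$). In the paper the oscillatory piece $u_1$ is \emph{identically} $e^{it\Delta}\phi$ for an explicit $\phi$ with $\|\phi\|_{L^2}\lesssim\|g\|_{\mathcal H}$, so the full-space Strichartz estimate \eqref{strichartzCauchy} applies directly with no $TT^*$ argument; this part is trivial. The evanescent piece $u_2$ is where all the work is, and your proposal to handle it by ``pointwise-in-$y$ control by the trace bound followed by Minkowski'' does not go through. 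If you apply Minkowski in $\eta$ to the formula $u_2 = \int_0^\infty e^{-y\eta}w(x,t,\eta)\,d\eta$ and then try to use a $(d-1)$-dimensional Strichartz estimate on $w(\cdot,\cdot,\eta)$, you hit two obstructions: first, the admissible pair $(p,q)$ for dimension $d$ satisfies $2/p+(d-1)/q<(d-1)/2$, which is not a global-in-time admissible pair for the $(d-1)$-dimensional Schr\"odinger equation, so that estimate is false on $\R_t$; second, the resulting $\eta$-integral behaves like $\int_0^\infty \eta^{-1/q}\cdot\eta\,\|\widehat g(\cdot,\eta^2-|\cdot|^2)\|_{L^2_\xi}\,d\eta$, and Cauchy--Schwarz against $\|g\|_{\mathcal H}\sim\|\eta\widehat g(\xi,\eta^2-|\xi|^2)\|_{L^2_{\xi,\eta}}$ produces the divergent weight $\int_0^\infty\eta^{-2/q}\,d\eta$. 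The evanescent piece really does require the $TT^*$ argument that the paper carries out: one computes the kernel $N_{t,s}$ (lemma \ref{idNts}), proves the pointwise dispersive bound $|N_{t,s}|\lesssim|t-s|^{-d/2}$ via Van der Corput, proves the $L^2\to L^2$ bound via the Laplace-transform boundedness lemma \ref{planchdupauvre}, interpolates, and closes with Hardy--Littlewood--Sobolev. None of this is replaceable by a Minkowski trick.

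Two smaller remarks. Your ``transition region'' worry does not actually arise: the decomposition $\delta+|\xi|^2 = \pm\eta^2$ is sharp (no smooth cutoff), and the $TT^*$ argument for $u_2$ already treats small $\eta$ uniformly, so there is no separate near-characteristic analysis. Also, your labeling of the zones is reversed (the evanescent/``elliptic'' zone where $e^{-y\sqrt{|\xi|^2+\delta}}$ decays is $\{|\xi|^2+\delta>0\}$, not $\{|\xi|^2+\delta<0\}$), and the bound $\mathrm{Re}\sqrt{|\xi|^2+\delta}\gtrsim\sqrt{|\xi|^2+|\delta|}$ fails near the characteristic set $\delta\approx-|\xi|^2$; the actual decay rate is only $\sqrt{|\xi|^2+\delta}$, which is precisely why the problem is delicate. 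For the $0<s<2$ range the paper uses differentiation at $s=2$ plus interpolation via proposition \ref{interpHs0} rather than a Littlewood--Paley summation, though either could in principle be made to work.
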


\begin{proof}
\textbf{Existence} We first justify the existence of $g_n$ as in definition \ref{defsol2}. 
For any $M>0$, according to corollary \ref{normeloc} there exists 
$g_M\in \mathcal{H}^s(\R)$ that coincides with $g$ for $t\in [0,M]$, and vanishes if $t\leq 0$ or 
$t\geq M+1$. Next we shift $g_M^\delta (x,t)=g_M(x,t-\delta)$, and recall 
$\lim_0 \|g_M^\delta-g_M\|_{\mathcal{H}^s}=0$.
Let $\rho\in C_c^\infty(\R^{d-1}\times\R_t)$ with $\text{supp}(\rho)\subset 
\{|t|\leq 1\}$, $\int \rho\,dx dt=1$. Then 
setting $\rho_{\varepsilon}=\rho(\cdot/\varepsilon)/\varepsilon^d$,
\begin{eqnarray*}
\|\rho_\varepsilon*g_M^\delta-g_M^\delta\|_{\mathcal{H}^s}^2=\iint_{\R^{d-1}\times \R}
|1-\widehat{\rho}(\varepsilon\xi,\varepsilon\delta)|^2|\widehat{g_M^\delta}|^2(1+|\xi|^2+|\delta|)^s
\sqrt{||\xi|^2+\delta|}d\delta d\xi\rightarrow_{\varepsilon}0,\\
\text{supp}(\rho_\varepsilon*g_M^\delta)\subset\{(x,t):\ \delta-\varepsilon\leq t\leq M+1+\delta+\varepsilon\}.
\end{eqnarray*}
Now we remark $g_M^\delta\in \mathcal{H}^s\Rightarrow e^{-it\Delta}g\in \dot{H}^{1/4}(\R,L^2)\subset L^4(\R,L^2)$, 
thus $\rho_\varepsilon *g_M^\delta\in \cap_{k\geq 0}H^k$. Moreover 
$\lim_{M\rightarrow \infty}\|g\|_{\mathcal{H}^s([M,\infty[)}=0$ thus if $P_0g$ is the extension by zero for $t\leq 0$
\begin{equation*}
\lim_{M\rightarrow \infty}\lim_{\delta\to 0}\lim_{\varepsilon\rightarrow 0}\|\rho_\varepsilon*g_M^\delta
-P_0g\|_{\mathcal{H}^s(\R^{d-1}\times\R_t)}=0.
\end{equation*}
We also remark that for $\varepsilon\leq \delta,\ \text{supp}(\rho_\varepsilon *g_M^\delta)\subset \{t\geq 0\}$, 
so that an appropriate choice of $\varepsilon_n\leq \delta_n,M_n,$ provides a smooth sequence $(g_n)$ as in 
definition \ref{defsol2}.\\
For such $g_n$, we postpone the existence of a smooth solution $u_n$ and a priori estimate 
\eqref{strichartzBVP} to the next paragraphs. 
Now if \eqref{strichartzBVP} is true \emph{for smooth solutions}, the case 
$(p,q)=(\infty,2)$ implies that $(u_n)$ converges to a solution $u$ in $L^\infty H^s$, and the estimate 
on $u_n$ for general $(p,q)$ provides the estimate on $u$.\vspace{1mm}\\
\textbf{Uniqueness } It is again a consequence of the a priori estimate applied to the smooth solutions.\vspace{2mm}
The main issue is thus to prove estimate \eqref{strichartzBVP} : it was obtained very recently by \cite{RSZ} with 
$\|u\|_{L^pW^{s,q}}$ in the left hand side for \emph{bounded} time intervals. 
While the core of the $L^p_tL^q$ estimate does not require significant modifications we include a full proof for 
comfort of the reader.\\
\end{proof}

\paragraph{Proof of estimate \eqref{strichartzBVP}, $s=0$} We assume that the Kreiss-Lopatinskii condition \eqref{UKL}
is satisfied, and that $g\in \cap_{k\geq 0} H^k_0(\R^{d-1}\times \R_t^+)$.
Let us look back at the formal computation leading to \eqref{solEDO}, which reads
\begin{equation*}
\mL u=e^{-\sqrt{|\xi|^2-i\tau}\,y}\frac{\mL g}{b\cdot V_-(\xi,\tau)}.
\end{equation*}
According to the Kreiss-Lopatinskii condition \eqref{UKL}, $|\mathcal{L}g/(b\cdot V_-)|\sim |\mathcal{L}g|$, uniformly 
in $(\tau,\xi)$, so that using Paley-Wiener's theorem $\mathcal{L}g/(b\cdot V_-)$ is the Fourier-Laplace transform 
of some $g_1$ supported in $t\geq 0$. \\
Now we let $\gamma\rightarrow 0$: since $g\in \cap_{k\geq 0}H^k_0(\R^{d-1}\times \R_t^+)$, its zero extension 
belongs to $\cap_{k\geq 0}H^k(\R^{d-1}\times \R_t)$, and we can (abusively) identify 
$\mL g(\xi,i\delta)=\widehat{P_0g}(\xi,\delta)$, with 
\begin{equation*}
\forall\,k\geq 0,\ \int_{\R^{d-1}\times \R}(1+|\xi|^2+|\delta|^2)^k|\widehat{P_0g}|^2d\delta d\xi<\infty. 
\end{equation*}
Since $|\widehat{g_1}|\sim |\widehat{P_0 g}|$, $g_1\in \cap_{k\geq 0}H^k(\R^{d-1}\times \R_t)$ 
and for any $s\geq 0$, $\|g_1\|_{\mathcal{H}^s}\sim \|g\|_{\mathcal{H}^s}$, moreover it is supported in $t\geq 0$ thus 
its restriction belongs to $\cap_{k\geq 0} \mathcal{H}^s_0(\R_t^+)$. We are reduced 
to solve the IBVP \eqref{BVP} with smooth Dirichlet boundary condition $g_1$. 
We abusively denote $\widehat{u}(\xi,\delta)$ for $\mathcal{L}u(\xi,i\delta)$, drop the index $1$ of $g_1$ 
and simply assume 
\begin{equation*}
\widehat{u}(\xi,\delta)=e^{-\sqrt{|\xi|^2+\delta}\,y}\widehat{g}(\xi,\delta),\ g\in \cap_{k\geq 0}H^k_0.
\end{equation*}
If $\delta +|\xi|^2\geq 0$, $\sqrt{\delta+|\xi|^2}
\in \R^+$is the usual square root, else $\sqrt{\delta+|\xi|^2}=-i\sqrt{|\delta+|\xi|^2|}$. 
The solution $u(x,y,t)$ is then obtained by inverse Fourier transform. We split the integral depending on the sign 
of $\delta+|\xi|^2$, the change of variables $\delta+|\xi|^2=\pm\eta^2$ gives 
\begin{eqnarray}
\nonumber u(x,y,t)
&=&\frac{1}{(2\pi)^{d}}\int_{\R^{d-1}}\int_{\delta\leq -|\xi|^2} e^{iy\sqrt{|\delta+|\xi|^2|}}
e^{i(\delta t+x\cdot\xi)}\widehat{g}(\xi,\delta)d\delta d\xi\\
\nonumber 
&&+\frac{1}{(2\pi)^{d}}\int_{\R^{d-1}}\int_{\delta>-|\xi|^2} e^{-y\sqrt{\delta+|\xi|^2}}e^{i(\delta t+x\cdot\xi)}
\widehat{g}(\xi,\delta)d\delta d\xi
\\ 
\nonumber 
&=& \frac{1}{(2\pi)^{d}}\int_{\R^{d-1}}\int_{0}^\infty e^{i(y\eta+x\cdot\xi)}e^{-it(|\xi|^2+\eta^2)}
2\eta \widehat{g}(\xi,-\eta^2-|\xi|^2)d\eta\, d\xi\\
\nonumber 
&&+\frac{1}{(2\pi)^{d}}\int_{\R^{d-1}}\int_{0}^\infty e^{-y\eta+ix\cdot\xi}e^{it(-|\xi|^2+\eta^2)}
2\eta\widehat{g}(\xi,-|\xi|^2+\eta^2)d\eta\, d\xi\\
\label{formBVP}&:=& u_1+u_2.
\end{eqnarray}

%
\noindent From the smoothness of $g$ the formula is absolutely convergent, infinitely differentiable 
in $x,y,t$, and clearly gives a solution to \eqref{BVP}, so that the formal computation is justified for smooth solutions. 
Moreover, the formulas are well defined for $t\in \R$ (and 
actually cancels for $t<0$ by Paley-Wiener's theorem), therefore we will focus on proving the seemingly stronger, 
but more natural estimate 
\begin{equation}\label{improvedStric}
\|u\|_{L^p(\R,L^q)}\lesssim \|g\|_{\mathcal{H}(\R^+)}.
\end{equation}

\paragraph{Control of $u_1$} Let $\widehat{\phi}(\xi,\eta):=2\eta\widehat{g}
(\xi,-\eta^2-|\xi|^2)1_{\eta\geq 0}$, we observe $u_1(x,y,t)=e^{it\Delta} \phi$, 
so that the classical Strichartz estimate \eqref{strichartzCauchy} gives 
\begin{eqnarray}
\nonumber
\|u_1\|_{L^p(\R^+,L^q(\R^{d-1}\times\R^+)}\leq \|u_1\|_{L^p(\R,L^q(\R^d))}\lesssim 
\|\phi\|_{L^2}
&\sim& \|\widehat{\phi}\|_{L^2}\\
&\sim&\iint \eta^2|\widehat{g}(\xi,-|\xi|^2
-\eta^2)|^2d\eta d\xi\\
\label{estimu1}&\sim& \int_{\R^d}\int_{-\infty}^{-|\xi|^2}\sqrt{||\xi|^2+\delta|}|\widehat{g}(\xi,\delta)|^2d\delta d\xi
\\
&\leq& \|g\|_{\mathcal{H}}^2.
\end{eqnarray}
\paragraph{Control of $u_2$} As mentioned before, it is more convenient to let $t$ vary in $\R$ rather than $\R^+$, 
obviously bounds in $L^p(\R,L^q(\R^{d-1}\times\R^+))$ imply 
bounds in $L^p(\R^+,L^q(\R^{d-1}\times\R^+))$.\\
The idea in \cite{RSZ} is to use a $TT^*$ argument similar to the classical one for the Schr\"odinger
equation, namely if we set $\widehat{\psi}=2\eta \widehat{g}(\xi,-|\xi|^2+\eta^2)1_{\eta\geq 0}$ then 
\eqref{formBVP} reads 
\begin{equation*}
u_2=\frac{1}{(2\pi)^{d}}\int_{\R^{d-1}}\int_{\R}e^{-y|\eta|+ix\cdot\xi}e^{it(-|\xi|^2+\eta^2)}
\widehat{\psi}(\xi,\eta)d\eta\, d\xi :=T(\psi),
\end{equation*}
with $\|\psi|\|_{L^2}\lesssim \|g\|_{\mathcal{H}}$.
Consider $T$ as an operator $L^2(\R^{d-1}\times \R)\to L^p(\R_t,L^q(\R^{d-1}\times \R^+))$, the $TT^*$ argument 
consists in proving 
\begin{equation*}
\|TT^*\|_{L^{p'}L^{q'}\to L^pL^q}<\infty.  
\end{equation*}
If such a bound holds true, then $\| T^*f\|_{L^2}^2=\langle TT^*f,f\rangle \lesssim \|f\|_{L^{p'}L^{q'}}^2$, thus 
$T^*$ is continuous $L^{p'}L^{q'}\to L^2$, and by duality 
$T:L^2\to L^pL^q$ is continuous, which gives the expected bound $\|u_2\|_{L^pL^q}\lesssim \|g\|_{\mathcal{H}}$.
Now let us write
\begin{eqnarray*}
u_2(x,y,t)&=&\frac{1}{(2\pi)^{d}}\iint_{\R^{d}} \iint_{\R^{d}}
e^{-y|\eta|-it(|\xi|^2-\eta^2)+ix\cdot\xi}e^{-ix_1\cdot\xi-iy_1\eta}
\psi(x_1,y_1)dx_1dy_1d\eta d\xi \\
&=& \frac{1}{(2\pi)^{d}}\iint_{\R^{d}}\bigg(\iint_{\R^{d}}e^{-y|\eta|-it(|\xi|^2-\eta^2)+ix\cdot\xi}
e^{-ix_1\cdot\xi-iy_1\eta}d\xi d\eta\bigg)\psi(x_1,y_1)dx_1dy_1.
\end{eqnarray*}
We denote\footnote{While $X$ corresponds to the space variable $(x,y)$ that we use throughout the paper, the variable $X_1$ is 
purely artificial.}
$X=(x,y)\in \R^{d-1}\times \R^+$, $X_1=(x_1,y_1)\in \R^d$, observe that $T\psi$ can be seen as the action of a kernel 
with parameter
$K_t(X,X_1)$ on $\psi(X_1)$:
\begin{equation*}
u_2(x,y,t)=\frac{1}{(2\pi)^d}Op(K_t)\cdot \psi. 
\end{equation*}
 According to the $TT^*$ argument, 
it suffices to bound $Op(K_t)\circ Op(K_t)^*:\ L^{p'}(\R,L^{q'}(\R^{d-1}\times \R^+))\to 
L^p(\R,L^q(\R^{d-1}\times \R^+))$. After a few computations one may check
\begin{eqnarray}
\nonumber
Op(K_t)\circ Op(K_t)^*\,f&=&\int_{\R^{d-1}\times \R^+\times \R_s}\bigg(\int_{\R^d}K_t(X,X_1)
\overline{K_s}(X_2,X_1)dX_1\bigg)f(X_2,s)dX_2ds
\\
\label{defkt}
&:=&\int_{\R_s} \bigg(\int_{\R^{d-1}\times \R^+}N_{t,s}(X,X_2)f(X_2,s)dX_2\bigg)ds\\
&=&\int_{\R_s} \big(\text{Op}(N_{t,s})\cdot f(\cdot,s)\big)(X)ds
\end{eqnarray}

\begin{lemma}\label{idNts}
We have for $(X,X_2)\in (\R^{d-1}\times\R^+)^2$
\begin{equation*}
N_{t,s}(X,X_2)=(2\pi)^d\int_{\R^d}e^{i(|\xi|^2-\eta^2)(s-t)}e^{i\xi\cdot(x-x_2)}e^{-(y+y_2)|\eta|}d\eta d\xi.
\end{equation*}
\end{lemma}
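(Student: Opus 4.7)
The plan is to recognize the inner integral over $X_1$ as a Plancherel pairing in the variables $(\xi,\eta)$, and then simplify the remaining integrand algebraically. First I would fix $X=(x,y)$, $X_2=(x_2,y_2)$, $t$ and $s$ as parameters, and introduce the auxiliary symbol
\begin{equation*}
G_t(X;\xi,\eta):=e^{-y|\eta|-it(|\xi|^2-\eta^2)+ix\cdot\xi},\qquad (\xi,\eta)\in\R^{d-1}\times\R.
\end{equation*}
With this notation the definition of $K_t$ (just below \eqref{defkt}) reads
\begin{equation*}
K_t(X,X_1)=\int_{\R^d}G_t(X;\xi,\eta)\,e^{-ix_1\cdot\xi-iy_1\eta}\,d\xi\,d\eta,
\end{equation*}
which is exactly the Fourier transform of $(\xi,\eta)\mapsto G_t(X;\xi,\eta)$ evaluated at $(x_1,y_1)=X_1$ (using the convention $\widehat{f}(z)=\int f e^{-iz\cdot\cdot}$).

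Next I would observe that $G_t(X;\cdot,\cdot)$ belongs to $L^2(\R^d_{\xi,\eta})$ since $y>0$ (the factor $e^{-y|\eta|}$ gives integrability in $\eta$, and the $L^\infty$ factor in $\xi$ is multiplied by localisation coming from the spatial oscillation---more precisely one first localises, applies Parseval, and removes the localisation: this causes no difficulty in the Schwartz class setting that came from the reduction to smooth $g\in\cap_k H^k_0$). Then Plancherel's identity in the variable $X_1\in\R^d$ gives
\begin{equation*}
\int_{\R^d}K_t(X,X_1)\,\overline{K_s(X_2,X_1)}\,dX_1
=(2\pi)^d\int_{\R^d}G_t(X;\xi,\eta)\,\overline{G_s(X_2;\xi,\eta)}\,d\xi\,d\eta.
\end{equation*}
Finally I would compute the product inside the right-hand integral:
\begin{equation*}
G_t(X;\xi,\eta)\,\overline{G_s(X_2;\xi,\eta)}
=e^{-(y+y_2)|\eta|}\,e^{i(s-t)(|\xi|^2-\eta^2)}\,e^{i(x-x_2)\cdot\xi},
\end{equation*}
which, restricted to $(X,X_2)\in(\R^{d-1}\times\R^+)^2$ and inserted above, yields exactly the formula for $N_{t,s}(X,X_2)$ stated in the lemma.

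The only point that deserves care is justifying the Plancherel pairing: the integrals defining $K_t$ and $N_{t,s}$ are oscillatory, and $K_t$ is \emph{not} absolutely integrable in $X_1$. The cleanest way to handle this is to note that by the reduction at the beginning of the proof of \eqref{strichartzBVP} we are working with $g\in\cap_k H^k_0$, so $\psi\in\mathcal{S}$ and the formal manipulations leading to \eqref{defkt} can be understood as an equality of tempered distributions in $X_1$ before pairing against $\overline{K_s(X_2,\cdot)}$; alternatively, one regularises by an additional factor $e^{-\varepsilon(|\xi|^2+\eta^2)}$ and passes to the limit $\varepsilon\to 0^+$, which makes every integral absolutely convergent and reduces the computation to a truly elementary application of Plancherel's theorem.
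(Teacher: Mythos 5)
Your proposal is correct and is essentially the same argument as the paper's: you recognize $K_t(X,\cdot)$ as a Fourier transform in $X_1$ of the symbol $G_t(X;\cdot,\cdot)$ and invoke Plancherel, while the paper writes out the triple integral and simplifies $\mathcal{F}_{X_1\to(\xi,\eta)}\mathcal{F}^{-1}_{(\xi_1,\eta_1)\to X_1}=\mathrm{Id}$ explicitly — these are the same identity packaged two ways. Your closing paragraph on justifying the formal manipulation (distributional pairing or $\varepsilon$-regularization, valid since the reduction to $g\in\cap_k H^k_0$ makes $\psi\in\mathcal{S}$) is a sensible addition that the paper leaves implicit.
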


\begin{proof}
According to identity \eqref{defkt}
\begin{eqnarray*}
N_{t,s}&=&\int_{\R^d} K_t(X,X_1)\overline{K_s}(X_2,X_1)dX_1\\
&=&\int_{\R^{3d}} e^{-it(|\xi|^2-\eta^2)+ix\cdot\xi-y|\eta|-i(\xi\cdot x_1
+\eta y_1)}e^{is(|\xi_1|^2-\eta_1^2)-ix_2\cdot\xi_1-y_2|\eta_1|+i(\xi_1\cdot x_1+\eta_1y_1)}\\
&& \hspace{10cm}d\eta d\xi d\eta_1 d\xi_1 dX_1
\\
&=&(2\pi)^d\int_{\R^d}e^{-it(|\xi|^2-\eta^2)+ix\cdot\xi-y|\eta|}\mathcal{F}_{X_1\rightarrow \xi,\eta}
\mathcal{F}^{-1}_{(\xi_1,\eta_1)\rightarrow X_1}\big(e^{-ix_2\cdot\xi_1
-y_2|\eta_1|+is(|\xi_1|^2-\eta_1^2)}\big)d\xi d\eta\\
&=& (2\pi)^d\int_{\R^d}e^{-it(|\xi|^2-\eta^2)+ix\cdot\xi-y|\eta|}\,e^{-ix_2\xi
-y_2|\eta|+is(|\xi|^2-\eta^2)}d\xi d\eta,
\end{eqnarray*}
which is the expected result.
\end{proof}

The estimate of $N_{t,s}$ requires a (classical) substitute to Plancherel's formula :

\begin{lemma}\label{planchdupauvre}
The map $\mL:f\rightarrow \int_0^\infty e^{-\lambda y}f(y)dy$ is continuous $L^2(\R^+)\rightarrow L^2(\R^+)$.
\end{lemma}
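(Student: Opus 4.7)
The plan is to recognize the operator $\mathcal{L}$ as an integral operator with kernel $K(\lambda,y) = e^{-\lambda y}$ on $\mathbb{R}^+ \times \mathbb{R}^+$, and to apply Schur's test with an appropriately chosen positive weight. Recall that Schur's test states that if one can find $w : \mathbb{R}^+ \to \mathbb{R}^+$ and a constant $C > 0$ such that
\begin{equation*}
\int_0^\infty K(\lambda,y) w(y)\, dy \leq C w(\lambda) \quad \text{and} \quad \int_0^\infty K(\lambda,y) w(\lambda)\, d\lambda \leq C w(y),
\end{equation*}
then the integral operator with kernel $K$ is bounded on $L^2(\mathbb{R}^+)$ with operator norm at most $C$.

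The natural candidate, dictated by the scaling invariance $K(\mu \lambda, y/\mu) = K(\lambda, y)$, is the homogeneous weight $w(y) = y^{-1/2}$. With this choice, the change of variable $u = \lambda y$ and the identity $\Gamma(1/2) = \sqrt{\pi}$ give
\begin{equation*}
\int_0^\infty e^{-\lambda y} y^{-1/2}\, dy = \frac{\sqrt{\pi}}{\sqrt{\lambda}} = \sqrt{\pi}\, w(\lambda),
\end{equation*}
and by the symmetry of the kernel under $(\lambda,y) \leftrightarrow (y,\lambda)$ the second inequality holds with the same constant. Schur's test then yields $\|\mathcal{L}\|_{L^2(\mathbb{R}^+) \to L^2(\mathbb{R}^+)} \leq \sqrt{\pi}$.

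There is essentially no obstacle here: the only subtle point is picking the right Schur weight, and the scale-invariance of $K$ under $(\lambda,y) \mapsto (\mu\lambda, y/\mu)$ forces the power $-1/2$. An alternative, equally short route would be to conjugate by Mellin transform: since $\mathcal{L}$ becomes multiplication by $\Gamma(s)$ on the critical line $\operatorname{Re}(s) = 1/2$, and $|\Gamma(1/2 + it)|$ is bounded, Plancherel for Mellin yields the same conclusion. Either approach suffices for the intended application.
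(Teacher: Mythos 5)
Your proof is correct, and it is a genuinely different route from the one in the paper. You apply Schur's test directly to the kernel $K(\lambda,y)=e^{-\lambda y}$, with the weight $w(y)=y^{-1/2}$ singled out by scale-invariance; a single Gamma-function computation then gives the bound $\|\mathcal{L}\|\leq\sqrt{\pi}$ (which is in fact the sharp constant). The paper instead expands $\|\mathcal{L}f\|_2^2$ by Fubini into the bilinear form $\iint f(y_1)\overline{f}(y_2)/(y_1+y_2)\,dy_1dy_2$ — i.e.\ it first converts $\mathcal{L}^*\mathcal{L}$ into a Hilbert-type kernel — then splits the domain at $y_1=y_2$ and finishes with the classical Hardy inequality for the averaging operator $\frac{1}{y}\int_0^y$. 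Both arguments are elementary and of comparable length; yours has the advantage of yielding the optimal norm and of being a one-step application of a standard criterion, while the paper's makes the connection to the $TT^*$ structure of the surrounding argument (and to the Hilbert/Hardy circle of inequalities) more visible. Your alternative remark via the Mellin transform (multiplication by $\Gamma(1/2+it)$, which is bounded on the critical line) is likewise valid and would give the same sharp constant.
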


\begin{proof}
 We have
\begin{equation*}
\|\mL f\|_2^2=\langle \mL f,\mL f\rangle =\int_{(\R^+)^3}e^{-\lambda(y_1+y_2)}f(y_1)\overline{f}(y_2)dy_2dy_1d\lambda
=\int_{(\R^+)^2}\frac{f(y_1)\overline{f}(y_2)}{y_1+y_2}dy_1dy_2
\end{equation*}
Splitting $(\R^+)^2=\{y_2\leq y_1\}\cup\{y_1\leq y_2\}$, we remark
\begin{equation*}
\|\mL f\|_2^2=\int_{0}^\infty f(y_1)\frac{1}{y_1}\int_0^{y_1}\frac{\overline{f}(y_2)}{1+y_2/y_1}dy_2dy_1
+\int_{0}^\infty \overline{f}(y_2)\frac{1}{y_2}\int_0^{y_2}\frac{f(y_1)}{1+y_1/y_2}dy_1dy_2.
\end{equation*}
One easily concludes using $|f(y_2)/(1+y_2/y_1)|\leq |f(y_2)|$ and Hardy's inequality.
\end{proof}

\begin{prop}
The operator $\text{Op}(N_{t,s})$ satisfies for $2\leq p\leq \infty$ 
\begin{eqnarray}
\|\text{Op}(N_{t,s})v\|_{L^p(\R^{d-1}\times \R^+)} \lesssim \frac{\|v\|_{L^{p'}(\R^{d-1}\times \R^+)}}
{|t-s|^{d(1/2-1/p)}}.
\end{eqnarray}
\end{prop}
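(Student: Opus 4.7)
My plan is to establish the two endpoint bounds $p=\infty$ (dispersive $L^1\to L^\infty$) and $p=2$ (uniform $L^2\to L^2$), then interpolate by Riesz--Thorin. The algebra is eased by the fact that the kernel in Lemma~\ref{idNts} factorises as
\begin{equation*}
N_{t,s}(X,X_2)=(2\pi)^d\,K_{t-s}^{(d-1)}(x-x_2)\cdot J_{t-s}(y+y_2),
\end{equation*}
with
\begin{equation*}
K_{t-s}^{(d-1)}(x-x_2):=\int_{\R^{d-1}}e^{-i|\xi|^2(t-s)+i\xi\cdot(x-x_2)}d\xi,
\qquad
J_{t-s}(Y):=\int_{\R}e^{i\eta^2(t-s)}e^{-Y|\eta|}d\eta.
\end{equation*}

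\textbf{Step 1: endpoint $p=\infty$.} The factor $K^{(d-1)}_{t-s}$ is the classical free Schr\"odinger kernel on $\R^{d-1}$, hence $\|K^{(d-1)}_{t-s}\|_{L^\infty}\lesssim |t-s|^{-(d-1)/2}$. For $J_{t-s}(Y)$ I will apply the van der Corput lemma with phase $\phi(\eta)=\eta^2$ (so $|\phi''|\equiv 2$) and amplitude $\psi(\eta)=e^{-Y|\eta|}$: since $\|\psi\|_{L^\infty}=1$ and $\|\psi'\|_{L^1(\R)}=2$ uniformly in $Y\geq 0$, one obtains $|J_{t-s}(Y)|\lesssim |t-s|^{-1/2}$ uniformly in $Y\geq 0$. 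Combining these two bounds,
\begin{equation*}
\|N_{t,s}\|_{L^\infty_{X,X_2}}\lesssim |t-s|^{-d/2},
\end{equation*}
which yields the $L^1\to L^\infty$ dispersive estimate by Schur/Young.

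\textbf{Step 2: endpoint $p=2$.} Write $\text{Op}(N_{t,s})$ in Fourier on $x$ and ``Laplace'' on $y$: for $v\in L^2(\R^{d-1}\times\R^+)$, set
\begin{equation*}
\tilde v(\xi,\eta):=\int_{\R^+}\int_{\R^{d-1}}e^{-i\xi\cdot x_2}e^{-y_2|\eta|}v(x_2,y_2)\,dx_2\,dy_2,\qquad (\xi,\eta)\in\R^{d-1}\times\R,
\end{equation*}
so that $\tilde v(\xi,\cdot)$ is even in $\eta$, and by Plancherel in $x$ followed by Lemma~\ref{planchdupauvre} in $y$,
\begin{equation*}
\|\tilde v\|_{L^2_{\xi,\eta}(\R^{d-1}\times\R)}\lesssim \|v\|_{L^2(\R^{d-1}\times\R^+)}.
\end{equation*}
A direct computation from the definition of $N_{t,s}$ gives
\begin{equation*}
\text{Op}(N_{t,s})v(x,y)=c_d\,\mathcal F^{-1}_{\xi\to x}\Bigl[e^{i|\xi|^2(s-t)}\int_\R e^{-i\eta^2(s-t)}e^{-y|\eta|}\tilde v(\xi,\eta)\,d\eta\Bigr].
\end{equation*}
Taking $L^2_x$ by Plancherel, then $L^2_y$, and applying Lemma~\ref{planchdupauvre} once more to the ``inverse'' Laplace transform $w(\xi,\eta)\mapsto \int_\R e^{-y|\eta|}w(\xi,\eta)d\eta=\int_0^\infty e^{-y\eta}[w(\xi,\eta)+w(\xi,-\eta)]d\eta$, one obtains
\begin{equation*}
\|\text{Op}(N_{t,s})v\|_{L^2}\lesssim \|e^{-i\eta^2(s-t)}\tilde v\|_{L^2_{\xi,\eta}}=\|\tilde v\|_{L^2_{\xi,\eta}}\lesssim \|v\|_{L^2},
\end{equation*}
uniformly in $t,s$.

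\textbf{Step 3: interpolation.} Riesz--Thorin between Steps 1 and 2 (with parameter $\theta=2/p$) produces
\begin{equation*}
\|\text{Op}(N_{t,s})v\|_{L^p}\lesssim |t-s|^{-d(1/2-1/p)}\|v\|_{L^{p'}},\qquad 2\leq p\leq \infty.
\end{equation*}

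The main obstacle is Step 2: the operator is not a pure Fourier multiplier because of the half-line $y\geq 0$, so we must use the Laplace substitute for Plancherel given by Lemma~\ref{planchdupauvre} to transfer the $|\eta|$-exponential kernel to $L^2$. Step 1 is routine once van der Corput is invoked uniformly in $Y$, so the regularity of the amplitude $e^{-Y|\eta|}$ on $\R$ (with a corner at $\eta=0$) only matters through its $L^1$-norm of the derivative, which stays uniformly bounded in $Y$.
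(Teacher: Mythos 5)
Your proposal is correct and follows essentially the same approach as the paper's proof: factorize $N_{t,s}$ into the $(d-1)$-dimensional free Schr\"odinger kernel times a one-dimensional oscillatory integral with exponential damping, get the $L^1\to L^\infty$ endpoint from van der Corput (uniform in $Y\ge 0$), get the $L^2\to L^2$ endpoint from Plancherel in $x$ together with Lemma~\ref{planchdupauvre} applied to the Laplace transforms in $y_2$ and $y$, and finish by Riesz--Thorin. The only cosmetic difference is that you introduce the intermediate transform $\tilde v$ explicitly, whereas the paper writes the $p=2$ estimate as one chain of norms; the content is identical.
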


\begin{proof}
The case $p=\infty$: according to proposition \ref{idNts} 
\begin{eqnarray*}
N_{t,s}(X,X_2)&=&\int_{\R^d}e^{i(|\xi|^2-\eta^2)(s-t)}e^{i\xi\cdot (x-x_2)}e^{-(y+y_2)|\eta|}d\eta d\xi\\
&=& \int_{\R^{d-1}}e^{i|\xi|^2(s-t)}e^{i\xi\cdot (x-x_2)}d\xi \int_{\R}e^{i\eta^2(t-s)}e^{-(y+y_2)|\eta|}d\eta\\
&=& \frac{e^{i\frac{|x-x_2|^2}{4(t-s)}}}{(4i\pi(t-s))^{(d-1)/2}} \int_{\R}e^{i\eta^2(t-s)}e^{-(y+y_2)|\eta|}d\eta
\end{eqnarray*}
The Van Der Corput lemma implies 
\begin{equation*}
\big|\int_{\R}e^{i\eta^2(t-s)}e^{-(y+y_2)|\eta|}d\eta \big|\lesssim \frac{\|e^{-(y+y_2)|\eta|}\|_{L^\infty_\eta}+
\|(e^{-(y+y_2)|\eta|})'\|_{L^1_\eta}}{\sqrt{|t-s|}}\lesssim \frac{1}{|t-s|^{1/2}}.
\end{equation*}
Therefore $|N_{t,s}|\lesssim 1/|t-s|^{d/2}$ uniformly in X,$X_2$, this implies the case $p=\infty$.\\
For the case $p=2$ we use Plancherel's formula and lemma \ref{planchdupauvre}: 
\begin{eqnarray*}
\|\text{Op}(N_{t,s})v\|_{L^2}&=&\bigg\|\int_{\R^{d-1}}e^{i|\xi|^2(s-t)}e^{i\xi\cdot x}
\int_{\R\times \R^+\R^{d-1}} e^{-i\xi\cdot x_2-i\eta^2(s-t)-(y+y_2)\eta}v(X_2)d\eta dX_2 d\xi\bigg\|_{L^2_{xy}}
\\
&\sim&\bigg\|\bigg\|\int_{\R^{d-1}}e^{-i\xi\cdot x_2} \int_{\R^{2}}
e^{-i\eta^2(s-t)}e^{-(y+y_2)|\eta|}v(X_2)d\eta dy_2 dx_2 \bigg\|_{L^2_{\xi}}\bigg\|_{L^2_y}
\\
&\sim& \bigg\|\bigg\| \int_{\R}e^{-|\eta|y} \int_{\R}
e^{-i\eta^2(s-t)}e^{-y_2|\eta|}v(X_2)d\eta dy_2 \bigg\|_{L^2_{y}}\bigg\|_{L^2_{x_2}}
\\
&\lesssim& \bigg\|\bigg\| e^{-i\eta^2(s-t)}\int_{\R}e^{-y_2|\eta|}v(X_2)dy_2 \bigg\|_{L^2_{\eta}}\bigg\|_{L^2_{x_2}}
\\
&\lesssim& \| v(X_2)\|_{L^2_{X_2}}.
\end{eqnarray*}
The general case follows from an interpolation argument.
\end{proof}

\noindent
The estimate on $\text{Op}(K_t)\circ \text{Op}(K_t)^*$ now follows from the Hardy-Littlewood-Sobolev lemma 
(e.g. theorem 2.6 in \cite{linponce}):
for $\displaystyle p>2,\ \frac{2}{p}+\frac{d}{q}=\frac{d}{2}$, we have 
$\displaystyle 1+\frac{1}{p}=\frac{1}{p'}+d\bigg(\frac{1}{2}-\frac{1}{q}\bigg)$, thus
\begin{eqnarray*}
\|\text{Op}(K_t)\circ \text{Op}(K_t)^*f\|_{L^p_tL^q_X}&=&
\bigg\|\int_{\R} \text{Op}(N_{t,s})f(\cdot,s)ds\bigg\|_{L^p_tL^q_X}
\\
&\lesssim& \bigg\|\int_{\R} \frac{\|f(\cdot,s)\|_{L^{q'}_X}}{|t-s|^{d(1/2-1/q)}}ds\bigg\|_{L^p_t}
\\
&\lesssim& \|f\|_{L^{p'}_tL^{q'}_X}.
\end{eqnarray*}
Using the $TT^*$ argument this ends estimate \eqref{strichartzBVP} for the case $s=0$.
\paragraph{Estimate \eqref{strichartzBVP}, the case $s=2$}
By differentiation of formula \eqref{formBVP}, for $|\alpha|+\beta+2\gamma\leq 2$, and using the case $s=0$
\begin{equation*}
\|\partial_x^\alpha\partial_y^\beta\partial_t^\gamma u\|_{L^p_tL^q}\lesssim 
\bigg(\iint ||\xi|^2+\delta|^{(1+\beta)/2}|\xi|^\alpha |\delta|^\gamma
|\widehat{g}|^2
d\delta d\xi\bigg)^{1/2}\lesssim \|g\|_{\mathcal{H}^2_0(\R_t^+)}^2.
\end{equation*}
\begin{rmq}
We recall that in the inequality above, $\widehat{g}$ is the Fourier transform of the extension of $g$
that vanishes for $t\leq 0$, which is why we can not simply take $g\in \mathcal{H}^2(\R^+)$.\\
Obviously, the same argument applies as soon as $s$ is an even integer, but since the non-integer case is slightly more 
delicate, we chose to consider only $s\leq 2$ for simplicity.
\end{rmq}
\paragraph{Estimate \eqref{strichartzBVP}, the case $0<s<2$} 
This is an interpolation argument. For $p>2,\ 2/p+d/q=d/2$, the solution map is continuous 
$$\mathcal{H}(\R^+)\rightarrow L^p(\R_t,L^q),\ \mathcal{H}^2_0(\R^+)\to L^p(\R_t,W^{2,q})
\cap W^{1,p}(\R_t,L^q),$$ 
thus by interpolation it is continuous 
$[\mathcal{H},\mathcal{H}^2_0]_{s,2}\to L^p(\R_t,B^{2s}_{q,2})\cap B^s_{p,2}(\R_t,L^q)$, this gives the 
result by using the interpolation identities of proposition \ref{interpHs0} and by restriction on $t\geq 0$. 
\begin{flushright}
$\displaystyle \qed $
\end{flushright}

%

\paragraph{The boundary value problems on $[-T,\infty[$ and $\R_t$} A natural question (and actually useful in the rest 
of the paper) is the solvability
of the BVP on other time intervals than $[0,\infty[$. As we mentioned before, the backward BVP can be ill-posed. 
However translations have a better behaviour: first, we extend the operator 
$(a,b)\to B(a,b)$ to distributions in $\mathcal{H}(\R)\times \mathcal{H}'(\R)$ with the formula
\begin{equation*}
B(a,b)=\mathcal{F}_{x,t}^{-1}\big(b_1(\xi,i\delta)\widehat{a}(\xi,\delta)+b_2(\xi,i\delta)\widehat{b}(\xi,\delta)\big).
\end{equation*}
Under the Kreiss-Lopatinskii condition, this extension maps $\mathcal{H}\times \mathcal{H}'\to \mathcal{H}(\R)$.
For $g\in \mathcal{H}(\R)$ smooth, supported in $t\geq 0$ and $u$ a smooth solution to the pure BVP \eqref{BVP}, 
we define $u_T=u(t+T)$ for some $T\in \R$. Then from the explicit formula \eqref{solEDO}, $u_T$ satisfies 
\begin{equation*}
\mathcal{F}B(u_T|_{y=0},\ \partial_yu_T|_{y=0})=e^{-iT\delta}\mathcal{L}g(\xi,i\delta)=\mathcal{F}(g(\cdot+T)),
\end{equation*}
so that $u_T$ is a solution of the BVP 
 \begin{equation*}
 \left\{
 \begin{array}{ll}
  i\partial_tv+\Delta v=0, \ (x,y,t)\in \R^{d-1}\times \R^+\times [-T,\infty[, \\
  B(v|_{y=0},\partial_{y}v|_{y=0})=g(\cdot+T),\\
  v(\cdot,-T)=0.
 \end{array}
\right.
\end{equation*}
Therefore up to the appropriate translation of $g$, to solve a BVP on $[-T,\infty[$ is equivalent to solve a BVP 
on $[0,\infty[$. 
A useful consequence of this remark is the well-posedness of the BVP 
posed on $\R^{d-1}\times \R^+\times \R_t$. 

\begin{coro}\label{BVPR}
Consider the boundary value problem 
 \begin{equation}
 \left\{
 \begin{array}{ll}
  i\partial_tu+\Delta u=0,\\
  B(u|_{y=0},\partial_{y}u|_{y=0})=g,\\
  \lim_{t\rightarrow -\infty}u(\cdot,t)=0.
 \end{array}
\right.(x,y,t)\in \R^{d-1}\times \R^+\times \R_t. 
\end{equation}
If $B$ satisfies the Kreiss-Lopatinskii condition \eqref{UKL} and $g\in \mathcal{H}^s(\R)$, $0\leq s\leq 2$, 
there exists a unique 
solution $u\in C(\R,H^s)$, moreover it satisfies estimate \eqref{strichartzBVP} with $\R_t^+$ replaced by $\R_t$.\\
If $g$ vanishes on $\R^{d-1}\times ]-\infty,T]$, then so does $u$ on $(\R^{d-1}\times \R^+)\times ]-\infty,T]$.
\end{coro}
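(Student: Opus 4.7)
The plan is to use the translation invariance observation made just before the corollary to reduce the problem on $\R_t$ to a sequence of forward BVPs on $[-T,\infty[$, which in turn reduce to the standard BVP on $[0,\infty[$ already treated in Proposition \ref{WPBVP}. Then one passes to the limit $T\to\infty$ using point 3 of Corollary \ref{normeloc} (symmetric version: $\|g\|_{\mathcal{H}^s(]-\infty,-T])}\to 0$ as $T\to\infty$, which follows from the $t\mapsto -t$ symmetry of the $\mathcal{H}^s$ norm).

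\textbf{Existence.} Given $g\in\mathcal{H}^s(\R)$, for each $n\in\N$ pick a smooth approximation $g_n$ supported in $t\geq -n$ with $\|g-g_n\|_{\mathcal{H}^s(\R)}\to 0$ (one may regularize the truncation as in the existence argument for Proposition \ref{WPBVP}). The shifted data $\widetilde{g}_n(x,t):=g_n(x,t-n)$ lies in $\mathcal{H}^s_0(\R_t^+)$ (or $\mathcal{H}^{1/2}_{00}$ if $s=1/2$), so Proposition \ref{WPBVP} produces a unique smooth solution $\widetilde{u}_n$ of the forward BVP with this data. Setting $u_n(x,t):=\widetilde{u}_n(x,t+n)$, the translation computation recalled just before the corollary shows that $u_n\in C([-n,\infty[,H^s)$ solves the BVP on $[-n,\infty[$ with data $g_n$ and vanishing Cauchy data at $t=-n$. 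Extend $u_n$ by zero for $t\leq -n$. Because the Strichartz estimate of Proposition \ref{WPBVP} and the $\mathcal{H}^s$ norm are both translation invariant, we obtain
\begin{equation*}
\|u_n\|_{L^p(\R,B^s_{q,2})\cap B^{s/2}_{p,2}(\R,L^q)}\lesssim \|g_n\|_{\mathcal{H}^s(\R)}.
\end{equation*}
For $m>n$, both $g_n$ and $g_m$ vanish on $]-\infty,-n]$ (enlarging the vanishing region for $g_n$), so $u_m-u_n$ is the solution (in the sense above) of the BVP on $[-m,\infty[$ with data $g_m-g_n$, and the same a priori estimate yields a Cauchy estimate in $L^p B^s_{q,2}\cap B^{s/2}_{p,2}L^q$ and in particular in $C(\R,H^s)$ (case $(p,q)=(\infty,2)$). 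The limit $u$ inherits the Strichartz bound.

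\textbf{Uniqueness.} If $u_1, u_2$ are two solutions with the same data $g$, their difference $v$ solves the BVP with $g=0$ and $\lim_{t\to -\infty}v=0$. Translating to $[-T,\infty[$ for arbitrary $T$, the zero-data forward BVP has only the zero solution by Proposition \ref{WPBVP}, whence $v\equiv 0$ on $[-T,\infty[$ for all $T$.

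\textbf{Causality.} If $g$ vanishes on $]-\infty,T]$, choose the approximating sequence $g_n$ so that each $g_n$ also vanishes on $]-\infty,T]$ (possible since smoothing in $(x,t)$ at a scale $\varepsilon_n\to 0$ strictly less than the distance to $T$ preserves the support condition). Then $\widetilde{g}_n(x,t):=g_n(x,t+T)$ is supported in $t\geq 0$, and by the Paley--Wiener observation used in the proof of Proposition \ref{WPBVP} (the formula \eqref{formBVP} vanishes for $t<0$ when $g$ is supported in $t\geq 0$), the corresponding $\widetilde{u}_n$ vanishes for $t\leq 0$. Shifting back, $u_n$ vanishes for $t\leq T$, and this property passes to the limit $u$.

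The main (mild) obstacle is purely bookkeeping: one must verify that the Strichartz constants are independent of the translation parameter $n$, which is automatic because every norm involved is translation invariant; and for $s=1/2$ one must invoke $\mathcal{H}^{1/2}_{00}$ rather than $\mathcal{H}^{1/2}_0$ when applying Proposition \ref{WPBVP}, which is fine since $g_n$ smoothly vanishes near the translated origin.
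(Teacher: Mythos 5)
Your proof follows essentially the same approach as the paper: approximate $g$ by smooth compactly supported $g_n$, translate in time to reduce to the forward BVP of Proposition \ref{WPBVP}, use translation invariance of the $\mathcal{H}^s$ norm and the a priori estimate to get a Cauchy sequence, and pass to the limit; causality via the support of the approximants as in the proof of \eqref{strichartzBVP}.

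One step in the uniqueness paragraph does not quite close as written. You translate the difference $v=u_1-u_2$ to $[-T,\infty[$ and invoke Proposition \ref{WPBVP}'s uniqueness for the zero-data forward BVP. But that proposition concerns the BVP with \emph{vanishing Cauchy data at the left endpoint}, and you have not established $v(-T)=0$ for any finite $T$; you only know $\lim_{t\to-\infty}v(t)=0$. The cleaner argument, implicit in the paper, is that a solution in the sense of Definition~\ref{defsol2} is by construction a $C_tH^s$-limit of smooth solutions $v_n$ with boundary data $g_n\to g$; applying the a priori estimate \eqref{strichartzBVP} to the difference $v_n^{(1)}-v_n^{(2)}$ of the two approximating sequences, whose boundary data converge to $0$, immediately yields $u_1=u_2$. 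A second minor point: mollifying $g$ at scale $\varepsilon_n$ produces approximants vanishing only on $]-\infty,T-\varepsilon_n]$, not on $]-\infty,T]$; you then obtain $u=0$ on $]-\infty,T[$ and recover $u(T)=0$ by continuity of $u$ in $C(\R,H^s)$ (or, as the paper does, shift the data away from $T$ before mollifying).
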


\begin{proof}
Fix $g\in \mathcal{H}^s(\R)$. By density there exists $g_n\in C_c^\infty(\R^{d-1}\times \R_t)$ such that 
\begin{equation*}
\|g-g_n\|_{\mathcal{H}^s(\R_t)}\longrightarrow_n 0.
\end{equation*}
We can assume that $g_n$ is supported in $[-T_n,\infty[$, and $T_n$ is increasing. By translation invariance in time, 
there exists a smooth solution $u_n$ to 
 \begin{equation}\label{bvptranslate}
 \left\{
 \begin{array}{ll}
  i\partial_tu_n+\Delta u_n=0,\\
  B(u_n|_{y=0},\partial_{y}u_n|_{y=0})=g_n,\\
  u_n(\cdot,-T_n)=0.
 \end{array}
\right.(x,y,t)\in \R^{d-1}\times \R^+\times [-T_n,\infty[. 
\end{equation}
As was pointed out in the proof of estimate \eqref{strichartzBVP}, setting $u_n|_{]-\infty,-T_n[}=0$ defines a 
smooth extension of $u_n$, which solves the boundary value problem with $g_n|_{]-\infty,-T_n]}=0$.\\
Let $n\geq p$, then $\text{supp}(u_n-u_p)\subset [-T_n,\infty[$ and a priori 
estimate \eqref{strichartzBVP} implies
\begin{equation*}
\|u_n-u_p\|_{L^\infty(\R, H^s(\R))}\lesssim \|g_n-g_p\|_{\mathcal{H}^s([-T_n,\infty[)}\lesssim 
\|g_n-g_p\|_{\mathcal{H}^s(\R)}.
\end{equation*}
This implies that $(u_n)$ converges to some $u\in C_tH^s$. Moreover 
$$\forall\,n\in \N,\ \displaystyle \lim_{-\infty}\|u_n(t)\|_{H^s}=0\Rightarrow 
\lim_{-\infty}\|u(t)\|_{H^s}=0.$$
The other estimates can be obtained as for proposition \ref{WPBVP}.\\
In the case where $g$ is supported in $\R^{d-1}\times [T,\infty[$, it suffices to observe that we can assume that 
$g_n$ is supported in $\R^{d-1}\times [T_n,\infty[$, and use the previous observation on the support of smooth solutions.
\end{proof}

\subsection{Estimates for the Cauchy problem}\label{secCauchy}

\paragraph{Pure Cauchy problem} 
We recall (see \eqref{UKL}) that the Kreiss-Lopatinskii condition reads 
$\alpha\leq |b_1-\sqrt{|\xi|^2+\delta}b_2|\leq \beta$, therefore we define 
$\Lambda$ the Fourier multiplier of symbol $\sqrt{||\xi|^2+\delta|	}$ that 
acts on functions defined on $\R^{d-1}\times \R_t$. In order to control 
$\|B(u|_{y=0},\partial_yu|_{y=0})\|_{\mathcal{H}^s}$ we need to control 
$\|u|_{y=0}\|_{\mathcal{H}^s}$ and $\|\Lambda^{-1}\partial_yu|_{y=0}\|_{\mathcal{H}^s}$.
\begin{prop}\label{estimcauchy}
The solution $e^{it\Delta}u_0$ of the Cauchy problem 
\begin{equation*}
\left\{
\begin{array}{ll}
i\partial_tu+\Delta u=0,\\
u|_{t=0}=u_0,
\end{array}
\right. (x,y,t)\in \R^{d+1},
\end{equation*}
satisfies the following estimates for $0\leq s\leq 2 $:
\begin{eqnarray}
\label{stricCauchy}
\forall\,p>2,\ \frac{2}{p}+\frac{d}{q}=\frac{d}{2},\ 
\|u\|_{L^p(\R_t,B^s_{q,2})\cap B^{s/2}_{p,2}(\R_t,L^q) }\lesssim \|u_0\|_{H^s} ,\\
\label{traceCauchy}
\|u|_{y=0}\|_{\mathcal{H}^s(\R_t)}
+\|\Lambda^{-1}(\partial_yu|_{y=0})\|_{\mathcal{H}^s(\R_t)}\lesssim \|u_0\|_{H^s}.
\end{eqnarray}

\end{prop}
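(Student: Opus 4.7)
The plan is to treat the two estimates \eqref{stricCauchy} and \eqref{traceCauchy} separately.

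For the Strichartz estimates \eqref{stricCauchy}, I would interpolate between the endpoints $s=0$ and $s=2$. The case $s=0$ is exactly the classical estimate \eqref{strichartzCauchy}. For $s=2$, I differentiate: for any multi-index $|\alpha|\leq 2$, $\partial_{x}^\alpha e^{it\Delta}u_0=e^{it\Delta}\partial_{x}^\alpha u_0$, so applying \eqref{strichartzCauchy} to each $\partial^\alpha u_0$ yields $\|u\|_{L^p_tW^{2,q}}\lesssim\|u_0\|_{H^2}$; and since $\partial_tu=i\Delta u$, the same gives $\|\partial_tu\|_{L^p_tL^q}\lesssim\|u_0\|_{H^2}$, hence $\|u\|_{W^{1,p}_tL^q}\lesssim\|u_0\|_{H^2}$. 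The intermediate range $0<s<2$ then follows by real interpolation with parameter $s/2$, using $[L^2,H^2]_{s/2,2}=H^s$ together with the identifications $B^{s/2}_{p,2}(\R_t,L^q)=[L^p_tL^q,W^{1,p}_tL^q]_{s/2,2}$ and the analogous one for $B^s_{q,2}$.

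For the trace estimate \eqref{traceCauchy}, I would compute $\widehat{u|_{y=0}}$ and $\widehat{\partial_yu|_{y=0}}$ explicitly in terms of $\widehat{u_0}$. Writing $e^{it\Delta}u_0=\mathcal{F}_{\xi,\eta}^{-1}(e^{-it(|\xi|^2+\eta^2)}\widehat{u_0})$, setting $y=0$ and taking $\mathcal{F}_{x,t}$ produces a Dirac mass on $\{\delta=-|\xi|^2-\eta^2\}$; resolving it and writing $\eta:=\sqrt{-|\xi|^2-\delta}$ yields, in the support $\{\delta\leq -|\xi|^2\}$,
\[
\widehat{u|_{y=0}}(\xi,\delta)\sim\frac{\widehat{u_0}(\xi,\eta)+\widehat{u_0}(\xi,-\eta)}{\eta},\qquad \widehat{\partial_yu|_{y=0}}(\xi,\delta)\sim \widehat{u_0}(\xi,\eta)-\widehat{u_0}(\xi,-\eta),
\]
so that both $|\widehat{u|_{y=0}}|$ and $|\mathcal{F}(\Lambda^{-1}\partial_yu|_{y=0})|$ are bounded by $|\widehat{u_0}|/|\eta|$ (with $|\eta|=\sqrt{||\xi|^2+\delta|}$). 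The change of variables $\eta\mapsto\delta=-|\xi|^2-\eta^2$, with Jacobian $|d\delta|=2\eta\,d\eta$, then gives
\[
\int (1+|\xi|^2+|\delta|)^s\sqrt{||\xi|^2+\delta|}\,\frac{|\widehat{u_0}|^2}{\eta^2}\,d\delta\,d\xi \sim \int(1+|\xi|^2+|\eta|^2)^s|\widehat{u_0}|^2\,d\eta\,d\xi=\|u_0\|_{H^s}^2,
\]
because the factor $\sqrt{||\xi|^2+\delta|}=\eta$ and the Jacobian $2\eta$ exactly compensate the $1/\eta^2$ coming from the squared amplitude, while $|\delta|=|\xi|^2+\eta^2$ in the support. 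This is precisely the higher-dimensional analogue of the Kato smoothing computation recalled in the introduction.

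The only delicate point is the bookkeeping in this trace computation: one must verify that the anisotropic weight in the definition of $\mathcal{H}^s$, and in particular the mysterious $\sqrt{||\xi|^2+\delta|}$ factor, is exactly what the change of variables requires in order to collapse to the isotropic $H^s(\R^d)$-weight. This is of course the very reason $\mathcal{H}^s$ is defined as it is, and no new ideas are needed beyond carefully tracking the weights.
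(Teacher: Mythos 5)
Your proposal is correct and takes essentially the same route as the paper. For \eqref{stricCauchy} the paper cites the $L^p_tB^s_{q,2}$ bound directly from Cazenave and then, exactly as you do, uses $\partial_t u=i\Delta u$ to get the $W^{1,p}_tL^q$ endpoint and interpolates to obtain the $B^{s/2}_{p,2}L^q$ part; for \eqref{traceCauchy} the paper performs the same change of variables $\delta=-|\xi|^2-\eta^2$ in the Fourier representation of the trace (splitting $\eta\gtrless 0$ and summing the two contributions, as in your ``two roots'' formula) and verifies that the Jacobian and the anisotropic weight $\sqrt{||\xi|^2+\delta|}$ collapse to the isotropic $H^s$ weight, with the extra factor $\eta$ from $\partial_y$ cancelling the symbol of $\Lambda^{-1}$ exactly as you observe.
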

\begin{proof}
The $L^pB^s_{q,2}$ estimate in \eqref{stricCauchy} is the classical Strichartz estimate,
see e.g. \cite{Cazenave} Corollary 2.3.9. Since $\partial_tu=i\Delta u$, 
$\|u\|_{W^{1,p}L^q}\lesssim \|u_0\|_{H^{2}}$, and the $B^{s/2}_{p,2}L^q$ bound follows 
by interpolation. For the trace estimate, we observe that the solution of the Cauchy problem satisfies
\begin{eqnarray*}
\forall\,(x,y,t)\in \R^{d+1},\
(e^{it\Delta}u_0)(x,y)=\frac{1}{(2\pi)^d}\iint e^{-i(|\xi|^2+\eta^2)t}e^{ix\cdot \xi+iy\eta}
\widehat{u_0}(\xi,\eta)d\xi d\eta,\\
\Rightarrow (e^{it\Delta}u_0)(x,0)=\frac{1}{(2\pi)^d}\iint e^{-i(|\xi|^2+\eta^2)t}e^{ix\cdot \xi}
\widehat{u_0}(\xi,\eta)d\xi d\eta.
\end{eqnarray*}
We consider the integral over $\eta\geq 0$, and use the change of variables $\delta=-(\eta^2+|\xi|^2)$
\begin{eqnarray*}
\int_{\R^{d-1}}\int_{\R^+}e^{-i(|\xi|^2+\eta^2)t}e^{ix\cdot \xi}\widehat{u_0}(\xi,\eta)d\eta d\xi 
&=&
\int_{\R^{d-1}}\int_{-\infty}^{-|\xi|^2}e^{i\delta t}e^{ix\cdot \xi}\frac{\widehat{u_0}\big(\xi,\sqrt{||\xi|^2+\delta|}}
{\sqrt{||\xi|^2+\delta|}}\big) d\delta d\xi \\
&:=& (2\pi)^{d}\mathcal{F}^{-1}_{x,t}(\psi).
\end{eqnarray*}
Then for $s\geq 0$, reversing the change of variable
\begin{eqnarray*}
\|\mathcal{F}_{x,t}^{-1}(\psi)\|_{\mathcal{H}^s(\R_t)}^2&=&\int_{\R^d} \sqrt{|\delta+|\xi|^2|}(1+|\delta|+|\xi|^2)^s
|\psi(\xi,\delta)|^2d\delta d\xi \\
 &=& \int_{\R^{d-1}}\int_{-\infty}^{-|\xi|^2}\sqrt{|\delta+|\xi|^2|}(1+|\delta|+|\xi|^2)^s
 \bigg|\frac{\widehat{u_0}\big(\xi,\sqrt{||\xi|^2+\delta|}\big)}{\sqrt{||\xi|^2+\delta|}}\bigg|^2d\delta d\xi \\
 &\lesssim& \iint_{\R^{d-1}\times \R^+}(1+|\xi|^2+|\eta|^2)^s|\widehat{u_0}(\xi,\eta)|^2d\eta d\xi \sim \|u_0\|_{H^s}^2.
\end{eqnarray*}
Symmetric computations can be carried for $\eta\in \R^-$, we conclude
\begin{equation*}
\|e^{it\Delta}u_0|_{y=0}\|_{\mathcal{H}^s(\R_t)}\lesssim \|u_0\|_{H^s}.
\end{equation*}
The estimate for $\|\Lambda^{-1}(\partial_yu|_{y=0})\|_{\mathcal{H}^s}$ is done similarly by writing
\begin{equation*}
(\partial_yu|_{y=0})=
\frac{1}{(2\pi)^d}\iint_{\R^d} e^{-i(|\xi|^2+\eta^2)t}e^{ix\cdot \xi}i\eta\widehat{u_0}(\xi,\eta)d\xi d\eta,
\end{equation*}
and using the fact that after the change of variable, the $\eta$ factor becomes $\sqrt{||\xi|^2+\delta|}$, so that it 
balances precisely the symbol of $\Lambda^{-1}$.
\end{proof}
\begin{rmq}
Inequality \eqref{traceCauchy} is a multi-dimensional variant (not new) of the sharp 
Kato-smoothing property that we already mentioned in the introduction. 
It is clear that the argument actually works for $s\geq 0$.
\end{rmq}

\paragraph{Pure forcing problem}
We consider $u=\displaystyle \int_0^te^{i(t-s)\Delta}f(s)ds$ solution of 
\begin{equation*}
\left\{
\begin{array}{ll}
 i\partial_tu+\Delta u=if,\\
 u(\cdot,0)=0.
\end{array}
\right.
(x,t)\in \R^{d+1}.
\end{equation*}
Our aim is to obtain an estimate of 
the kind $\|u|_{y=0}\|_{\mathcal{H}^s(\R_t)}\lesssim \|f\|_{L^1_tH^s}$. If the integral $\int_0^t$ was replaced by 
$\int_0^\infty$, we might simply remark that 
\begin{equation*}
u|_{y=0}= e^{it\Delta}\bigg(\int_0^\infty e^{-is\Delta}f(s)ds\bigg)\bigg|_{y=0}
\end{equation*}
and use Minkowski's inequality $\big\|\int_0^\infty e^{-is\Delta}f(s)ds\big\|_{H^s}\lesssim \|f\|_{L^{1}_tH^{s}}$.
Combined with proposition \ref{estimcauchy}, this implies 
$\|\int_0^\infty e^{i(t-s)\Delta}f(s)ds|_{y=0}\|_{\mathcal{H}^s}\lesssim \|f\|_{L^{1}_tH^{s}}$.
Unfortunately, due to the intricate nature of $\mathcal{H}^s$, which measures both time and space regularity, we can 
not apply the celebrated Christ-Kiselev lemma to deduce bounds for $\int_0^te^{i(t-s)\Delta}f(s)ds|_{y=0}$ 
(see also remark \ref{rmqTimereg} for a discussion on this issue). 
Nevertheless, we have the following proposition.
\begin{prop}\label{estimDuham}
For $0<s<2$, $(p,q)$, and $(p_1,q_1)$ admissible pairs, we have 
\begin{eqnarray}
\label{stricduham}
\bigg\|\int_0^te^{i(t-s)\Delta}f(s)ds\bigg\|_{L^{p_1}(\R_t,B^s_{q_1,2})\cap B^s_{p_1,2}(\R_t,L^{q_1})}&\lesssim &
\|f\|_{L^{p'}(\R_t,B^s_{q',2})\cap B^{s/2}_{p',2}(\R_t,L^{q'})},\\
\label{traceduham}
\bigg\|\int_0^te^{i(t-\tau)\Delta}f(\tau)d\tau\bigg|_{y=0}\bigg\|_{\mathcal{H}^s(\R_t)}&\lesssim &
\|f\|_{L^{p'}(\R_t,B^s_{q',2})\cap B^{s/2}_{p',2}(\R_t,L^{q'})},\\
\label{tracenormaleduham}
\bigg\|\Lambda^{-1}\bigg(\partial_y\int_0^te^{i(t-\tau)\Delta}f(\tau)d\tau\bigg)\bigg|_{y=0}
\bigg\|_{\mathcal{H}^s(\R_t)}&\lesssim &
\|f\|_{L^{p'}(\R_t,B^s_{q',2})\cap B^{s/2}_{p',2}(\R_t,L^{q'})}.
\end{eqnarray}
\end{prop}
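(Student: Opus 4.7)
The plan is to establish each of the three estimates at the endpoint regularities $s=0$ and $s=2$, and then obtain the full range $0<s<2$ by real interpolation with parameter $2$, using Proposition~\ref{interpHs0} to identify the intermediate spaces.

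Estimate (\ref{stricduham}) at $s=0$ is the classical inhomogeneous Strichartz bound between admissible pairs, following from the dispersive estimate and the Christ-Kiselev lemma; since spatial derivatives commute with $e^{i(t-s)\Delta}$, the $L^{p_1}B^s_{q_1,2}$ part is obtained by applying Christ-Kiselev to $\nabla_x^k f$. For the time-Besov factor $B^{s/2}_{p_1,2}L^{q_1}$, exploit the equation $\partial_t u = i\Delta u + f$ at $s=2$ to express $\|u\|_{W^{1,p_1}L^{q_1}}$ through $\|u\|_{L^{p_1}W^{2,q_1}}$ (already controlled) and $\|f\|_{L^{p_1}L^{q_1}}$ (controlled by the right-hand side via the Sobolev embeddings implicit in the assumed regularity of $f$), then real-interpolate with the $s=0$ bound.

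For estimates (\ref{traceduham}) and (\ref{tracenormaleduham}) the $s=0$ case is the heart of the matter, since the Christ-Kiselev lemma is unavailable for the $\mathcal{H}$-norm target (as the preceding remark explains). I would carry it out by direct Fourier analysis. Extending $f$ by zero to $t<0$ (so that the Duhamel reads $u(t) = \int_{-\infty}^t e^{i(t-s)\Delta}f(s)\,ds$ on $t\geq 0$, the case $t<0$ being recovered by time reversal), the space-time Fourier transform is
\begin{equation*}
\mathcal{F}_{x,t}(u|_{y=0})(\xi,\delta) = C\int_{\R}\frac{\widehat{f}(\xi,\eta,\delta)}{\delta+|\xi|^2+\eta^2+i0^+}\,d\eta.
\end{equation*}
By Sokhotski-Plemelj the integrand decomposes into a Dirac mass supported on the dispersion surface $\eta^2=-(|\xi|^2+\delta)$ (appearing in the region $|\xi|^2+\delta\leq 0$) plus a principal value in $\eta$. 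The delta part has exactly the structure of the Fourier trace formula in the proof of Proposition~\ref{estimcauchy}, now applied to $\phi=\int_{\R}e^{-is\Delta}f(s)\,ds$; dual Strichartz yields $\|\phi\|_{L^2}\lesssim\|f\|_{L^{p'}L^{q'}}$ and Proposition~\ref{estimcauchy} bounds its $\mathcal{H}$-norm. The principal-value part is a Hilbert-transform-type operator in $\eta$: weighting by $\sqrt{||\xi|^2+\delta|}$ (from the $\mathcal{H}$-norm) and switching $\delta\leftrightarrow\sqrt{|\delta+|\xi|^2|}$ reduces it to an $L^2$-bounded singular integral, giving the same bound $\lesssim\|f\|_{L^{p'}L^{q'}}$. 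Estimate (\ref{tracenormaleduham}) proceeds identically: the $\partial_y$ contributes an extra $i\eta$ in the numerator while $\Lambda^{-1}$ divides by $\sqrt{||\xi|^2+\delta|}$, and on the dispersion surface the two cancel exactly, reproducing the structure of the $\Lambda^{-1}(\partial_y u|_{y=0})$ trace from Proposition~\ref{estimcauchy}.

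The endpoint $s=2$ of the trace estimates follows by differentiation: spatial derivatives commute through the Duhamel, while $\partial_t u|_{y=0}$ is converted via the equation to $f|_{y=0}+i(\Delta'+\partial_y^2)u|_{y=0}$, each piece controlled by the $s=0$ estimate applied to spatial derivatives of $f$. Real interpolation between $s=0$ and $s=2$, using Proposition~\ref{interpHs0}, completes the proof for $0<s<2$. The principal obstacle is clearly the control of the principal-value piece in the $s=0$ trace estimate: this is exactly where the intricate $\sqrt{||\xi|^2+\delta|}$ weight of $\mathcal{H}$ becomes essential, precisely absorbing the $\eta$-singularity of the Duhamel kernel and compensating for the unavailability of a Christ-Kiselev reduction.
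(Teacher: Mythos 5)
Your overall architecture (treat the endpoints $s=0$ and $s=2$, then interpolate using Proposition~\ref{interpHs0}) and your handling of \eqref{stricduham} are consistent with the paper's. But for the trace estimates \eqref{traceduham} and \eqref{tracenormaleduham} at $s=0$, which you correctly identify as the heart of the matter, you take a genuinely different route, and it has a gap. The paper proves \eqref{traceduham} by a \emph{duality} argument: fix $g\in\mathcal{H}'(\R)$, let $v$ be the solution of the \emph{backward} Neuman boundary value problem $\partial_y v|_{y=0}=g$ (well-posed by Corollary~\ref{BVPR} and the discussion of the backward Kreiss--Lopatinskii condition), extend $v$ evenly across $y=0$, and integrate by parts to obtain $\int if\,\bar v = 2\int u|_{y=0}\,\bar g$; then the Strichartz bound $\|v\|_{L^pL^q}\lesssim\|g\|_{\mathcal{H}'}$ from Proposition~\ref{WPBVP} gives the estimate by duality. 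Your proposal replaces this with a Sokhotski--Plemelj decomposition of the Fourier-side Duhamel kernel. The delta-mass piece is fine: the restriction to the dispersion surface matches the trace formula of Proposition~\ref{estimcauchy} applied to $\int e^{-is\Delta}f\,ds$, and dual Strichartz gives $\|\int e^{-is\Delta}f\,ds\|_{L^2}\lesssim\|f\|_{L^{p'}L^{q'}}$.

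The gap is in the principal-value piece. You assert that after the change of variables $\mu=\sqrt{||\xi|^2+\delta|}$ this reduces to ``an $L^2$-bounded singular integral,'' but that assertion is doing the hard work without justification. An $L^2$-bounded singular integral would yield a bound in terms of $\|\widehat f\|_{L^2}$, i.e.\ $\|f\|_{L^2_{x,y,t}}$, not in terms of $\|f\|_{L^{p'}_tL^{q'}_{x,y}}$ with $p',q'<2$. The delta part escaped this trap precisely because restriction to the paraboloid is dual Strichartz; the off-surface contribution has no such built-in reduction. Moreover, even the claimed Hilbert-transform structure is not literal: after substituting $\delta=-|\xi|^2-\mu^2$, the numerator $\widehat f(\xi,\eta,-|\xi|^2-\mu^2)$ depends on $\mu$ through the time-frequency slot, so what you have is \emph{not} the Hilbert transform of a fixed function of $\eta$ evaluated at $\mu$, and the $L^2\to L^2$ bound for the Hilbert transform does not directly apply. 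This is precisely the obstruction the paper flags when it points out that Christ--Kiselev is unavailable for the $\mathcal{H}$-target; your Fourier-side approach runs into the same wall, and the duality argument is exactly the device that goes around it. (The paper's Remark~\ref{rmqTimereg} about Holmer's work in dimension one for $s<1/2$ suggests your approach is not hopeless in some regime, but it requires a genuine argument, not a one-line reduction.)

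A secondary issue is your $s=2$ step for the traces: you convert $\partial_t u|_{y=0}$ via the equation to $f|_{y=0}+i\Delta u|_{y=0}$, but $f|_{y=0}$ is not controlled in the $\mathcal{H}$-norm by the right-hand side $\|f\|_{L^{p'}W^{2,q'}\cap W^{1,p'}L^{q'}}$ (the trace of $f$ on $\{y=0\}$ only gets you a Sobolev space, not the mixed parabolic-weighted space $\mathcal{H}$). The paper avoids this by instead applying the $s=0$ trace estimate directly to $\partial_t u$, which solves the same equation with forcing $\partial_t f$, so that the bound is $\|\partial_t u|_{y=0}\|_{\mathcal{H}}\lesssim\|\partial_t f\|_{L^{p'}L^{q'}}$ with no trace of $f$ ever taken.
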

\begin{proof}
We start with \eqref{stricduham} and \eqref{traceduham}.
As a first reduction, we point out that according to the usual Strichartz estimates 
(see \cite{Cazenave}, theorem 2.3.3 to corollary 2.3.9) and proposition \ref{estimcauchy}
\begin{eqnarray*}
\bigg\|e^{it\Delta}\int_{-\infty}^0e^{-is\Delta}f(s)ds|_{y=0}\bigg\|_{\mathcal{H}^s(\R_t)} 
\lesssim \bigg\|\int_{-\infty}^0e^{-is\Delta}f(s)ds\bigg\|_{H^s(\R^{d-1}\times \R^+)}
\lesssim \|f\|_{L^{p'}_tB^s_{q',2}},\\
\bigg\|e^{it\Delta}\int_{-\infty}^0e^{-is\Delta}f(s)ds\bigg\|_{L^p_tB^s_{q,2}} 
\lesssim \bigg\|\int_{-\infty}^0e^{-is\Delta}f(s)ds\bigg\|_{H^s}\lesssim \|f\|_{L^{p'}_tB^s_{q',2}},\\
\text{and }\bigg\|\partial_t\int_{-\infty}^0e^{i(t-s)\Delta}f(s)ds\bigg\|_{L^pL^{q}}
=\bigg\|\int_{-\infty}^0e^{i(t-s)\Delta}\Delta f(s)ds\bigg\|_{L^p_tL^{q}}
\lesssim \|f\|_{L^{p'}_tW^{2,q'}}.
\end{eqnarray*}
So, by interpolation
$$
\bigg\|e^{it\Delta}\int_{-\infty}^0e^{-is\Delta}f(s)ds\bigg\|_{B^{s/2}_{p,2}L^{q}}
\lesssim \|f\|_{L^{p'}B^s_{q',2}}.
$$
Therefore, it suffices to estimate $\int_{-\infty}^te^{i(t-s)\Delta}f(s)ds$, which is the 
solution of $i\partial_tu+\Delta u=if$, $\lim_{-\infty}u=0$. In this case, the analog of 
\eqref{stricduham} is also a consequence of the classical results in \cite{Cazenave}, 
and the analog of \eqref{traceduham} relies on the following duality argument. 
\paragraph{The case $s=0$}
We fix $g\in \mathcal{H}'(\R)$ and denote $v$ the solution of the backward  Neuman 
boundary value problem 
\begin{equation*}
\left\{
\begin{array}{ll}
i\partial_tv+\Delta v=0,\\
\lim_{+\infty}v(t)=0\\
\partial_yv|_{y=0}=g
\end{array}
\right.(x,y,t)\in\R^{d-1}\times \R^+\times \R.
\end{equation*}
According to the discussion p.\pageref{WPbackwards} and corollary \ref{BVPR}, this problem is well-posed and 
the solution is in $\cap_{(\rho,\gamma)\text{ admissible}}L^\rho_tL^\gamma$.
We extend $v$ on $\R^d\times \R_t$ by reflection
\begin{equation*}
 v(x,y,t)=
 \left\{
 \begin{array}{ll}
  v(x,y,t),\ y\geq 0,\\
  v(x,-y,t),\ y<0.
 \end{array}
\right.
\end{equation*}
In particular, $v|_{y=0^-}=v|_{y=0^+}$ and $\partial_yv|_{y=0^-}=-\partial_yv|_{y=0^+}=-g$. 
Using a density argument, the following integration by part is justified:
\begin{eqnarray*}
 \int_{\R_t}\int_{\R^{d}} if\overline{v}dxdydt=&=&\int_{\R_t}\int_{\R^d}u\overline{i\partial_t v+\Delta v}dxdydt \\
&&+\int_{\R_t}\int_{\R^{d-1}}-u|_{y=0}\overline{\partial_y v|_{y=0^-}}+u|_{y=0}\overline{\partial_yv|_{y=0^+}}dxdt \\
 &&+\int_{\R_t}\int_{\R ^{d-1}} \partial_yu|_{y=0}\overline{v|_{y=0^-}}-\partial_yu|_{y=0}\overline{v|_{y=0^+}}dxdt\\
 &=& 2\int_{\R_t}\int_{\R ^{d-1}}u|_{y=0}\overline{g}dxdt.
\end{eqnarray*}
Taking the sup over $\|g\|_{\mathcal{H}'}=1$, by duality  we deduce
\begin{equation}\label{forcings0}
\|u|_{y=0}\|_{\mathcal{H}(\R_t)}\leq\frac{1}{2}\|f\|_{L^{p'}(\R_t,L^{q'})}\sup_{\|g\|_{\mathcal{H}'}=1}
\|v\|_{L^{p}_tL^{q}}\lesssim \|f\|_{L^{p'}_tL^{q'}}. 
\end{equation}

\paragraph{Higher order estimates}
We recall that $\Delta'$ is the Laplacian in the $x$ variable.
If $f\in L^{p'}_tW^{2,q'}$, then $\Delta'u$ is the solution of
\begin{equation*}
i\partial_t\Delta'u+\Delta \Delta'u=\Delta'f,\ \lim_{-\infty} \Delta'u(t)=0,
\end{equation*}
therefore the estimate for $s=0$ implies $\|\Delta'u|_{y=0}\|_{\mathcal{H}}\lesssim 
\|\Delta'f\|_{L^{p'}_tL^{q'}}\lesssim \|f\|_{L^{p'}_tW^{2,q'}}$. By interpolation we get 
for $0<s<2$
\begin{equation}\label{regx}
 \int_{\R^d}\sqrt{||\xi|^2+\delta|}(1+|\xi|^{2s})|\widehat{u|_{y=0}}|^2d\delta d\xi\lesssim 
 \|f\|_{L^{p'}_tB^{s}_{q',2}}^2.
\end{equation}
Similarly, if $f\in W^{1,p'}L^{q'}$, then $\partial_tu$ satisfies
\begin{equation*}
i\partial_t\partial_tu+\Delta \partial_t u=\partial_tf,\ \lim_{-\infty}\partial_tu(t)=0,
\end{equation*}
the estimate for $s=0$ gives $\|\partial_tu|_{y=0}\|_{\mathcal{H}}\lesssim 
\|\partial_tf\|_{L^{p'}_tL^{q'}}$ and by interpolation again 
\begin{equation}\label{regt}
 \int_{\R^d}\sqrt{||\xi|^2+\delta||}(1+|\delta|^{s})|\widehat{u|_{y=0}}|^2d\delta d\xi\lesssim \|f\|_{B^s_{p',2}L^{q'}}^2.
\end{equation}
Combining \eqref{regx} and \eqref{regt} implies for $0<s<2$
\begin{equation*}
\|u|_{y=0}\|_{\mathcal{H}^s}\lesssim \|f\|_{B^{s/2}_{p',2}L^{q'}\cap L^{p'}_tB^{s}_{q',2}}.
\end{equation*}
\paragraph{Estimate \eqref{tracenormaleduham}} For $s=0$, we only sketch the similar duality argument : 
consider $v$ solution of the backward BVP with Dirichlet boundary condition $g$, 
and extend it on $\R^{d}\times \R_t$ as an odd function in the $y$ variable. The same computations as for 
\eqref{traceduham} lead to 
\begin{eqnarray*}
\sup_{g\in \mathcal{H}(\R_t)}\int_{\R_t\times \R^{d-1}} \partial_yu|_{y=0}\overline{g}dxdt
\lesssim \|f\|_{L^{p'}L^{q'}}\|g\|_{\mathcal{H}(\R_t)},\\
\Rightarrow \bigg\|\partial_yu|_{y=0}\bigg\|_{\mathcal{H}'(\R_t)}\lesssim 
\|f\|_{L^{p'}(L^{q'})},
\end{eqnarray*}
according to \eqref{traceNeuman}, this estimate is precisely \eqref{tracenormaleduham} for $s=0$. The case $0<s\leq 2$
follows from the same differentiation/interpolation argument.
\end{proof}

\begin{rmq}\label{rmqTimereg} 
The space $L^{p'}B^s_{q',2}\cap B^s_{p',2}L^{q'}$ seems natural at least scaling wise. In the case of dimension 
$1$, Holmer \cite{Holmer} managed to prove \eqref{traceduham} with only $\|f\|_{L^{p'}W^{s,q'}}$ in the right hand side
under the condition $s<1/2$. For $s\geq 1/2$, it is convenient to add some time regularity.
\\
A (very formal) argument is as follows: suppose that $u$ is a smooth solution of $i\partial_tu+\Delta u=f,\ u|_{t=0}=0$. 
If $u|_{y=0}\in \mathcal{H}^2$, then $f|_{y=0}=i\partial_t(u|_{y=0}) +(\Delta u)|_{y=0}$, where 
$i\partial_tg\in \mathcal{H}$ and $w=\Delta u$ satisfies $i\partial_tw+\Delta w=\Delta f,\ w|_{t=0}=0$, so that 
the a priori estimate for $s=0$ gives $(\Delta u)|_{y=0}\in \mathcal{H}$. Therefore $f|_{t=0}$ should belong to 
$\mathcal{H}$, which can not be deduced from $f\in L^1_tH^2$. \\
Now if $f\in W^{1,1}_tL^2\cap L^1_tH^2$, from the numerology of Sobolev embeddings one expects 
\begin{eqnarray*}
f\in W^{3/4,1}_tH^{1/2}\Rightarrow \text{ ``almost'' } f|_{y=0}\in W^{3/4,1}_tL^2\hookrightarrow H^{1/4}_tL^2,\\
f\in W^{1/2,1}_tH^1\Rightarrow \text{ ``almost'' } f|_{y=0}\in W^{1/2,1}_tH^{1/2}\hookrightarrow L^2_tH^{1/2},  
\end{eqnarray*}
in particular, $f|_{y=0}\in H^{1/4}L^2\cap L^2H^{1/2}\hookrightarrow \mathcal{H}$. 
\end{rmq}

\subsection{Proof of theorems \ref{thtrivial} and \ref{maintheo} }\label{mainproof}
Up to using regularized data $u_0^n\in H^2_0,\ f_n\in W^{1,p'}_0L^{q'}\cap 
L^{p'}W^{2,q'}_0,\ g_n\in \mathcal{H}^2_0$ all quantities are well-defined, so we mainly focus on the issue of 
a priori estimates in this paragraph.
\paragraph{Proof of theorem \ref{thtrivial}}
First we point out a confusion to avoid for the operator $B$: if $B_{\R}$ is the Fourier multiplier 
with same symbol as $B$, $P_0$ the zero extension 
to $t\leq 0$, and $R$ the restriction to $t\geq 0$, we have 
\begin{equation*}
B=R\circ B_\R\circ P_0. 
\end{equation*}
We recall that $P_0$ (resp. $R$) is continuous $\mathcal{H}^s_0(\R^+)\to \mathcal{H}^s(\R)$, $s\neq 1/2$ (resp. 
$\mathcal{H}^s_0(\R)\to \mathcal{H}^s_0(\R^+)$), and by duality $P_0:\mathcal{H'}(\R^+)\to \mathcal{H}'(\R),\ R
:\ \mathcal{H}'(\R)\to \mathcal{H}'(\R^+)$ are continuous.
\subparagraph{The case $s=0$} We follow the method and notations from the beginning of section \ref{linestim}: 
let $v$ the solution of the Cauchy problem, $w$ the solution of \eqref{BVPaux2}, that is 
\begin{equation*}
\left\{
\begin{array}{ll}
i\partial_tv+\Delta v=f,\\
v|_{t=0}=u_0,
\end{array}\right.
\left\{
\begin{array}{ll}
i\partial_tw+\Delta w=0,\\
w|_{t=0}=0,\\
B(w|_{y=0},\partial_yw|_{y=0})=g-B(v|_{y=0},\partial_yv|_{y=0}),
\end{array}
\right. 
\end{equation*}
Since $\|v\|_{L^pL^q}\lesssim \|u_0\|_{L^2}+\|f\|_{L^{p'_1}_tL^{q'_1}}$ 
(Propositions \ref{estimcauchy} and \ref{estimDuham}), 
it suffices to check that $w$ exists and $\|w\|_{L^p_tL^q}\lesssim \|u_0\|_{L^2}+\|f\|_{L^{p'_1}_tL^{q'_1}}
+\|g\|_{\mathcal{H}}$.
Let us write $B_{\R}(a,b)=B_{1,\R}(a)+B_{2,\R}(b)$.
According to the Kreiss-Lopatinskii condition the 
symbols $b_1$ and $\sqrt{|\delta+|\xi|^2|}b_2$ are bounded uniformly in $(\delta,\xi)$.
From the estimates of section \ref{secCauchy}, $\|v|_{y=0}\|_{\mathcal{H}(\R_t)}+
\|\partial_yv|_{y=0}\|_{\mathcal{H}'(\R_t)}\lesssim \|u_0\|_{L^2}$,
this implies
\begin{eqnarray*}
\|B_{1,\R}\circ P_0\circ R (u|_{y=0})\|_{\mathcal{H}(\R_t)}&\lesssim& \|P_0\circ R u|_{y=0}\|_{\mathcal{H}(\R_t)}\lesssim 
\|u_0\|_{L^2}.\\
\|B_{2,\R}\circ P_0\circ R(\partial_yu|_{y=0})\|_{\mathcal{H}(\R_t)}^2
&=&\iint |b_2(\xi,\delta)|^2\sqrt{||\xi|^2+\delta|}\,
|\mathcal{F}_{x,t}\big(P_0\circ R(\partial_yu|_{y=0})\big)|^2d\xi d\delta \\
&\lesssim & \iint (||\xi|^2+\delta|)^{-1/2}\,
|\mathcal{F}_{x,t}\big(P_0\circ R(\partial_yu|_{y=0})\big)|^2d\xi d\delta \\
&=&\|P_0\circ R(\partial_yu|_{y=0})\|_{\mathcal{H'}(\R_t)}^2\lesssim \|u_0\|_{L^2},
\end{eqnarray*}
We can now apply proposition \ref{WPBVP} which gives the existence of $w$ with the expected Strichartz estimate. \\
The causality follows by taking the difference of two solutions and using the property on support of solutions 
in Corollary \ref{BVPR}.
\subparagraph{The case $s=2$} Here we assume $f\in L^{p'}_tW^{2,q'}\cap W^{1,p'}_tL^{q'}$, 
$u_0\in H^2_0(\R^{d-1}\times \R^+),\ g\in \mathcal{H}^2_0(\R^+)$. 
According to proposition \ref{WPBVP}, we can use again a superposition principle provided 
$$B(v|_{y=0},\partial_yv|_{y=0})\in \mathcal{H}^2_0(\R^+)\text{ or equivalently }
B_{\R}\circ P_0\circ R(v|_{y=0},\partial_yv|_{y=0})\in \mathcal{H}^2(\R),$$ 
since $B_{\R}\circ P_0$ is supported in $t\geq 0$.
By assumption, 
$v|_{y=t=0}=u_0|_{y=0}=0$, therefore estimate \eqref{traceCauchy} and corollary \ref{normeloc} imply
\begin{equation*}
\|B_{1,\R}\circ P_0\circ R(v|_{y=0})\|_{\mathcal{H}^2(\R)}\lesssim 
\|u_0\|_{H^2}+\|f\|_{L^{p'}W^{2,q'}\cap W^{1,p'}L^{q'}}.
\end{equation*}
Moreover, estimate \eqref{traceCauchy} also implies
\begin{eqnarray*}
\|\Delta'\partial_yv|_{y=0}\|_{\mathcal{H}'(\R_t)}^2+\|\partial_t\partial_yv|_{y=0})\|_{\mathcal{H}'(\R_t)}^2
&\sim &
\iint_{\R^d} \frac{|\widehat{\partial_yv|_{y=0}}|^2}{\sqrt{||\xi|^2+\delta|}}(|\xi|^2+|\delta|)^2d\xi d\delta\\
&\lesssim &\|u_0\|_{H^2}^26+\|f\|_{L^{p'}W^{2,q'}\cap W^{1,p'}L^{q'}}^2.
\end{eqnarray*}
But since $u_0\in H^2_0(\R^{d-1}\times \R^+)$, 
$\partial_yv|_{y=t=0}=\partial_yu_0|_{y=0}=0$ thus 
\begin{equation*}
\partial_tP_0\circ R(\partial_yv|_{y=0})=P_0\circ R(\partial_t\partial_yv|_{y=0}),\ 
\Delta'P_0\circ R(\partial_yv|_{y=0})=P_0\circ R(\Delta'\partial_yv|_{y=0}).
\end{equation*}
By continuity of $P_0\circ R:\ \mathcal{H}'\to\mathcal{H}'$, 
$(P_0\circ R(\partial_t\partial_yv|_{y=0}),\ P_0\circ R(\Delta'\partial_yv|_{y=0}))\in(\mathcal{H'})^2$. Finally, 
using the boundedness of $b_2\sqrt{||\xi|^2+\delta|}$ we get 
\begin{eqnarray*}
 \|B_{2,\R}\circ P_0\circ R(\partial_yv|_{y=0})\|_{\mathcal{H}^2}&\lesssim& \|\partial_tP_0\circ R(\partial_yv|_{y=0})
 \|_{\mathcal{H'}}
 +\|\Delta' P_0\circ R(\partial_yv|_{y=0})\|_{\mathcal{H'}}\\
& \lesssim &\|u_0\|_{H^2}+\|f\|_{L^{p'}W^{2,q'}\cap W^{1,p'}L^{q'}}
\end{eqnarray*}
which implies as expected $B_{2,\R}\circ P_0\circ R(\partial_yv|_{y=0})\in \mathcal{H}^2(\R)$.
\subparagraph{The case $0<s<2$} After fixing an extension operator, since $(u_0,f)\to B(v|_{y=0},\partial_yv|_{y=0})$ is 
continuous $L^2\times L^{p'}_tL^{q'}\to \mathcal{H}(\R^+)$ and $H^2_0\times \big(W^{1,p'}_tL^{q'}\cap L^{p'}_tW^{2,q'})
\to \mathcal{H}^2_0(\R^+)$, the general case follows by interpolation.
\paragraph{Proof of theorem \ref{maintheo}} Let $s\in [0,2]$. 
We fix extensions of  $u_0,f$ to $y\leq 0$ and solve
\begin{equation*}
\left\{
\begin{array}{ll}
i\partial_tv+\Delta v=f,\\
v|_{t=0}=u_0,
\end{array}
\right. (x,y,t)\in \R^d\times \R.
\end{equation*}
From the estimates for the Cauchy problem, $v|_{y=0}\in \mathcal{H}^s(\R)$. Consider the BVP
\begin{equation}\label{BVPaux}
\left\{
\begin{array}{ll}
i\partial_tw+\Delta w=0,\\
w|_{t=0}=0,\\
w|_{y=0}=g-v|_{y=0},
\end{array}
\right. (x,y,t)\in \R^{d-1}\times \R^+\times \R^+.
\end{equation}
If $s>1/2$, the trace $v|_{y=t=0}=u_0|_{y=0}$ is well defined and belong to $H^{s-1/2}$. Moreover
the compatibility condition imposes $(g-v|_{y=0})|_{t=0}=g|_{t=0}-u_0|_{y=0}=0$ so that for $s\in [0,2]\setminus\{1/2\}$,
$g-v|_{y=0}\in \mathcal{H}^s_0(\R^+)$. From proposition \ref{WPBVP} there exists a unique solution $w\in C(\R_t^+,H^s)$
to \eqref{BVPaux}. Now $u:=v|_{y\geq 0}+w$ is a solution of \eqref{IBVP}, it satisfies the expected 
estimate because according to propositions \ref{WPBVP}, \ref{estimcauchy} and \ref{estimDuham},
$v$ and $w$ do.\\
In the case $s=1/2$, we first note that 
$$\forall\,t\geq 0,\ \int_0^te^{i(t-s)}\Delta f(s)ds=\int_0^te^{i(t-s)\Delta} P_0\circ R(f(s))ds,$$ 
and $P_0\circ R(f) \in B^{1/4}_{p,2}(\R_t,L^q)\cap L^p(\R_t,B^{1/2}_{q,2})$. From Proposition \ref{estimDuham}
$\int_0^te^{i(t-s)\Delta }P_0\circ Rf(s)ds|_{y=0}\in \mathcal{H}^{1/2}(\R)$, and vanishes for $t\leq 0$, therefore 
$R\big(\int_0^te^{i(t-s)\Delta} f(s)ds|_{y=0})\in \mathcal{H}^{1/2}_{00}(\R^+)$ (see definition \eqref{def00}). 
In order to solve \eqref{BVPaux}, we are left to prove that 
if the compatibility condition is satisfied, then $(e^{it\Delta}u_0)|_{y=0}-g\in \mathcal{H}^{1/2}_{00}(\R^+)$. 
From the previous estimates, we know $(e^{it\Delta}u_0)|_{y=0}-g\in \mathcal{H}^{1/2}(\R^+)$, and we must check 
condition \eqref{condition00}, that is :
\begin{equation*}
\iint_{\R^+\times \R^{d-1}}\frac{\big|(e^{it\partial_y^2}u_0)|_{y=0}-e^{-it\Delta'}g(x,t)\big|^2}{t}dtdx<\infty.
\end{equation*}
Using the change of variable $t\to \sqrt{t}$, the compatibility condition \eqref{compaglob} ensures 
\begin{equation*}
\iint_{\R^+\times \R^{d-1}}\frac{\big|u_0(x,\sqrt{t})-e^{-it\Delta'}g(x,t)\big|^2}{t}dtdx<\infty.
\end{equation*}
Therefore we only need to estimate $u_0(x,\sqrt{t})-(e^{it\partial_y^2}u_0)|_{y=0}$. We use the following 
interpolation argument: if $u_0\in H^1(\R^d)$, the identity $u_0(x,\sqrt{t})-(e^{it\partial_y^2}u_0)|_{y=0}=
u_0(x,\sqrt{t})-u_0(x,0)+u_0(x,0)-(e^{it\partial_y^2}u_0)|_{y=0}$ makes sense, and thanks to Hardy's inequality 
\begin{equation*}
\iint_{\R^+\times \R^{d-1}}\frac{ | u_0(x,\sqrt{t})-u_0(x,0) |^2}{t^{3/2}}dtdx= 
2\iint_{\R^+\times \R^{d-1}}\frac{ | u_0(x,y)-u_0(x,0) |^2}{y^{2}}dydx\lesssim \|\partial_yu_0\|_{L^2}^2.
\end{equation*}
Similarly, the sharp Kato smoothing \eqref{traceCauchy} implies $\|(e^{it\partial_y^2}u_0)|_{y=0}\|_{\dot{H}^{3/4}_tL^2}
\lesssim \|u_0\|_{H^1}$ so that the (fractional) Hardy's inequality gives 
\begin{equation*}
\iint_{\R^+\times \R^{d-1}}\frac{ | (e^{it\partial_y^2}u_0)|_{y=0}-u_0(x,0)|^2}{t^{3/2}}dtdx\lesssim 
\|(e^{it\partial_y^2}u_0)|_{y=0}\|_{\dot{H}^{3/4}_tL^2}^2\lesssim \|u_0\|_{H^1}^2.
\end{equation*}
On the other hand, we have by a similar simpler argument
\begin{equation*}
\iint_{\R^+\times \R^{d-1}}\frac{|u_0(x,\sqrt{t})|^2+|(e^{it\partial_y^2}u_0)|_{y=0}|^2}{t^{1/2}}\lesssim 
\|u_0\|_{L^2}^2+\|(e^{it\partial_y^2}u_0)|_{y=0}\|_{H^{1/4}}^2\lesssim \|u_0\|_{L^2}^2.
\end{equation*}
We deduce by interpolation 
\begin{equation*}
\iint_{\R^+\times \R^{d-1}}\frac{|u_0(x,\sqrt{t})-(e^{it\partial_y^2}u_0)|_{y=0}|^2}{t}dt\lesssim \|u_0\|_{H^{1/2}}^2.
\end{equation*}
This implies $(e^{it\Delta}u_0)|_{y=0}-g\in \mathcal{H}^{1/2}_{00}$. Clearly, the argument is independent of 
the choice of the extension operator, and we can end the proof as for the case $s\neq1/2$.

\section{Local and global existence}\label{WPnonlin}
For simplicity, we only consider nonlinearities of the type $\varepsilon|u|^{a-1}u$, $a>1,\ \varepsilon\in \{-1,1\}$
Dirichlet boundary conditions, $u_0\in H^1$.
More general nonlinearities and indices of regularity can be treated with similar methods, see  
chapter $4$ from \cite{Cazenave}. \\
Since so far we have always considered global solution, some clarifications for local solutions of nonlinear problems 
are required.
For $P_T$ an extension operator as in lemma \ref{extOp}, consider the map $\Phi:v\in L^\infty(\R_t,H^1)\mapsto \Phi(v)$ 
the solution of 
\begin{equation}\label{OpPhi}
\left\{
\begin{array}{ll}
i\partial_tu+\Delta u=\varepsilon |P_Tv|^{a-1}P_Tv,\\
u|_{t=0}=u_0,\\
u|_{y=0}=g.
\end{array}
\right.
\end{equation}
If $1<a<1+4/(d-2)$, $2<a+1<2d/(d-2)$ thus by Sobolev's embedding $v\in L^\infty_tL^{a+1}$. If $(p,a+1)$ is admissible, 
we deduce $|P_Tv|^{a-1}P_Tu\in L^{p'}_t\cap L^{(a+1)'}$, and according to theorem \ref{maintheo} 
$\Phi$ is well-defined $L^\infty_tH^1\to C_tL^2$.\\
We say that $u$ is a local solution on $[0,T]$ of 
\begin{equation}\label{NLS}
\left\{
\begin{array}{ll}
i\partial_tu+\Delta u=\varepsilon |u|^{a-1}u,\\
u|_{t=0}=u_0,\\
u|_{y=0}=g.
\end{array}
\right.
\end{equation}
if $u$ is the restriction on $[0,T]$ of a fixed point of $\Phi$.
\begin{theo}
Let $(u_0,g)\in H^1(\R^{d-1}\times \R^+)\times \mathcal{H}^1(\R_t^+)$ such that 
$u_0|_{y=0}=g|_{t=0}$, $1<a<1+4/(d-2)$. The IBVP \eqref{NLS}
has a unique maximal solution in $C([0,T_{\max}),H^1)$. If $T_{\max}<\infty$, $\displaystyle 
\overline{\lim_{T_{\max}}} \|u(t)\|_{H^1}=\infty$. For any $T$ such that $u$ exists on $[0,T]$ and 
$(p,q)$ an admissible pair, then $u\in L^p([0,T],W^{1,q})\cap B^{1/2}_{p,2}L^q$. \\
If moreover $1+4/d\leq a$, there exists $\varepsilon>0$ such that if $\|u_0\|_{H^1}+\|g\|_{\mathcal{H}^1}<\varepsilon$
then the solution is global.
\end{theo}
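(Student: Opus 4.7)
The plan is a classical contraction fixed-point argument for the map $\Phi$ defined in \eqref{OpPhi}, performed in a Strichartz-type space that captures all the regularity produced by Theorem \ref{maintheo} at $s=1$. The additional technical ingredient, beyond the usual Cauchy theory on $\R^d$, is the Besov-in-time component $B^{1/2}_{p,2}(\R_t,L^q)$ of the Strichartz norm, which forces us to combine the composition estimate of Proposition \ref{compoFrac} with the extension operator $P_T$ of Lemma \ref{extOp} to extract a smallness factor $T^\alpha$.

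First I would fix an admissible pair $(p_1,q_1)$ adapted to the nonlinearity. Since $1<a<1+\tfrac{4}{d-2}$, we have $2<a+1<\tfrac{2d}{d-2}$, so Sobolev's embedding gives $H^1\hookrightarrow L^{a+1}$ and we can pick $(p_1,q_1)$ admissible with $q_1$ close to $a+1$. Set
\[
X_T:=L^\infty([0,T],H^1)\cap L^{p_1}([0,T],W^{1,q_1})\cap B^{1/2}_{p_1,2}([0,T],L^{q_1}).
\]
By Theorem \ref{maintheo} applied at $s=1$,
\[
\|\Phi(v)\|_{X_T}\lesssim \|u_0\|_{H^1}+\|g\|_{\mathcal{H}^1}+
\bigl\|\varepsilon|P_Tv|^{a-1}P_Tv\bigr\|_{L^{p'_1}(\R_t,B^1_{q'_1,2})\cap B^{1/2}_{p'_1,2}(\R_t,L^{q'_1})}.
\]
Note that the compatibility condition is preserved by $\Phi$ since $\Phi(v)$ has the prescribed trace $g$ and initial datum $u_0$, and these satisfy $g|_{t=0}=u_0|_{y=0}$ by hypothesis.

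Next I would estimate the nonlinear term. The $L^{p'_1}_t L^{q'_1}$ and $L^{p'_1}_t W^{1,q'_1}$ pieces are handled by Hölder combined with the chain rule, writing $\nabla F(u)\sim |u|^{a-1}\nabla u$ and distributing the exponents so that $|u|^{a-1}$ lands in a space controlled by Sobolev embedding from $H^1$ (this is where $a<1+4/(d-2)$ is used). The fractional Besov piece $B^{1/2}_{p'_1,2}L^{q'_1}$ is controlled by Proposition \ref{compoFrac}, giving
\[
\bigl\|F(P_Tv)\bigr\|_{L^{p'_1}B^1_{q'_1,2}\cap B^{1/2}_{p'_1,2}L^{q'_1}}
\lesssim \|P_Tv\|_{L^{r}_tL^{\sigma}}^{a-1}\|P_Tv\|_{X_{\R}},
\]
for suitably chosen $(r,\sigma)$ with Sobolev embedding $X_T\hookrightarrow L^r_tL^\sigma$. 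Applying Lemma \ref{extOp} with $s=1/2$, $p=p_1$ (so that $sp_1>1$ in the relevant range), the extension bound $\|P_Tv\|_{B^{1/2}_{p_1,2}}\lesssim T^{1/p_1-1/2}\|v\|_{B^{1/2}_{p_1,2}([0,T])}$ combined with Hölder in time yields a net factor $T^\alpha$ for some $\alpha>0$ strictly positive precisely when $a<1+4/(d-2)$. This gives a self-map and contraction on a suitable ball of $X_T$ for $T=T(\|u_0\|_{H^1},\|g\|_{\mathcal{H}^1})$ small. Uniqueness, the blow-up alternative ($T_{\max}<\infty\Rightarrow \limsup\|u(t)\|_{H^1}=\infty$), and the full Strichartz regularity on any subinterval of $[0,T_{\max})$ follow from the standard iteration argument, using that the lifespan produced by the contraction depends only on the $H^1$ norm at the initial time and on $\|g\|_{\mathcal{H}^1}$.

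For the small-data global statement when $a\geq 1+4/d$, I would re-run the fixed-point argument directly on $\R_t^+$ rather than on $[0,T]$, choosing the admissible pair $(p_1,q_1)$ so that the nonlinear bound closes \emph{without} any $T^\alpha$ factor. Concretely, the $L^2$-supercritical condition $a\geq 1+4/d$ allows one to pick $(p_1,q_1)$ (and an auxiliary Strichartz pair) for which Hölder in time plus Sobolev embedding into $L^{\sigma}$ yields
\[
\|u\|_{X_\infty}\leq C\bigl(\|u_0\|_{H^1}+\|g\|_{\mathcal{H}^1}\bigr)+C\|u\|_{X_\infty}^{a},
\]
where $X_\infty$ is the analog of $X_T$ on $\R_t^+$. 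A standard bootstrap/continuity argument then produces a global solution whenever $\|u_0\|_{H^1}+\|g\|_{\mathcal{H}^1}$ is below some threshold $\varepsilon>0$.

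The main obstacle I anticipate is the bookkeeping of exponents so that Proposition \ref{compoFrac} applies with $s=1/2$ and so that the Sobolev embedding needed to bound $\|u\|^{a-1}_{L^r_tL^\sigma}$ by Strichartz norms is compatible with both the subcritical case (extracting $T^\alpha$) and the mass-supercritical case (closing the inequality on $\R_t^+$). The interplay between the Besov $B^{1/2}_{p_1,2}$ scaling and the restriction/extension estimate of Lemma \ref{extOp} is what ultimately pins down the range of $a$ for which each argument works.
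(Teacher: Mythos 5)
Your plan matches the paper's proof essentially step for step: a fixed-point argument for $\Phi$ in the Strichartz space from Theorem \ref{maintheo} at $s=1$, with $q=a+1$ (the paper takes exactly $q=a+1$, not merely "close to"), using Proposition \ref{compoFrac} together with Lemma \ref{extOp} to extract a positive power of $T$ from the Besov-in-time piece of the nonlinearity, and, for $a\ge 1+4/d$, exponent bookkeeping shows the bound closes uniformly in $T$ so a continuity argument gives global existence for small data. The one detail worth making explicit, which the paper does and you leave implicit, is that the contraction is not performed in the full $X_T$ norm but on the ball $S$ of $X_T$ equipped with the \emph{weaker} complete metric $d(u,v)=\|u-v\|_{L^\infty_t L^2\cap L^p_tL^q}$ (Kato's method), since for non-integer $a$ the map $u\mapsto|u|^{a-1}u$ is Lipschitz only at the low-regularity level; this is exactly how the paper obtains both the self-map bound \eqref{estimduham} and the contraction estimate \eqref{estimuniq}.
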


\begin{proof}
We use the convenient notation $L^{1/p}=L^p$.
Let us recall shortly the classical Kato's argument, with some modifications to handle time 
regularity.
\paragraph{Local existence}
For $M$ to fix later, we set $S$ the ball of radius $M$ in
$L^\infty(\R^+,H^1)\cap L^p(\R^+,W^{1,q})\cap B^{1/2}_{p,2}(\R^+,L^q)$, $q=a+1$, $(p,q)$ admissible.
We use on $S$ the following distance
\begin{equation*}
d(u,v)=\|u-v\|_{L^\infty(\R^+,L^2)\cap L^q(\R^+,L^r)}.
\end{equation*}
$(S,d)$ is a complete set (see e.g. \cite{Cazenave} section 4.4). 
We fix an extension operator $P_T$ as in lemma \ref{extOp}:
such that for any  $v\in S$, 
\begin{equation}\label{timesupp}
\text{supp}(P_Tv)\subset \R^{d-1}\times \R^+\times [-T,2T],\ \|P_Tv\|_{B^{1/2}_{p,2}(\R_t,L^q)}\lesssim 
T^{1/p-1/2}\|v\|_{B^{1/2}_{p,2}(\R_t,L^q)},
\end{equation}
and we construct a fixed point to $\Phi$, with $\Phi$ defined at \eqref{OpPhi}.\\
Combining the inclusions $B^1_{q,2}\subset W^{1,q}$, $B^1_{q',2}\supset W^{1,q'}$ (see \cite{berglof} 
theorem 6.4.4), with the linear estimates of theorem \ref{maintheo} we get
\begin{equation*}
\|\Phi(v)\|_{L^\infty_tH^1\cap L^p_tW^{1,q}\cap B^{1/2}_{p,2}(\R^+,L^q)}\lesssim \|u_0\|_{H^1}+\|g\|_{\mathcal{H}^1}
+\||P_Tv|^{a-1}P_Tv\|_{L^{p'}W^{1,q'}\cap B^{1/2}_{p',2}L^{q'}}.
\end{equation*}
Using $aq'=q$, $\frac{1-2/q}{a-1}=1/q$, the embedding $H^1\hookrightarrow L^q$ and assumption 
\eqref{timesupp}, we have
\begin{eqnarray}\nonumber
\||P_Tv|^{a-1}P_Tv\|_{L^{p'}(\R,W^{1,q'})}&\lesssim& \|P_Tv\|^a_{L^{ap'}_tL^{q}}
+\|P_Tv\|_{L^{\infty}_tL^q}^{a-1}\|\nabla P_Tv\|_{L^p_tL^q}
\|1\|_{L^{1-2/p}([-T,2T],L^\infty)}
\\
&\lesssim& T^{1-1/(ap')}\|v\|^a_{L^{\infty}_tH^1}
+T^{1-2/p}\|P_Tv\|_{L^{\infty}H^1}^{a-1}\|\nabla P_Tv\|_{L^p_TL^q}
\\
\label{estimduham}&\lesssim& (T^{1-1/(ap')}+T^{1-2/p})M^a.
\end{eqnarray}
Similarly for the time regularity, we have using proposition \ref{compoFrac} and lemma \ref{extOp}
\begin{eqnarray*}
\||P_Tv|^{a-1}P_Tv\|_{B^{1/2}_{p',2}(\R,L^{q'})}&\lesssim& \|(P_Tv)^{a-1}\|_{L^{1-2/p}_TL^{1-2/q}}\|
P_Tv\|_{B^{1/2}_{p,2}L^q}\\
&\lesssim& T^{1-2/p}\|v\|_{L^{\infty}_tL^{q}}^{a-1}T^{1/p-1/2}\|v\|_{B^{1/2}_{p,2}L^q}\\
&\lesssim& T^{1/2-1/p}M^a.
\end{eqnarray*}
Therefore for $0\leq T\leq 1$,
\begin{equation*}
\|\Phi(v)\|_{L^\infty H^1\cap L^pW^{1,q}\cap B^{1/2}_{p,2}L^q}\lesssim \|u_0\|_{H^1}+\|g\|_{\mathcal{H}^1}
+ T^{1/2-1/p}M^a.
\end{equation*}
Choosing $M>\|u_0\|_{H^1}+\|g\|_{\mathcal{H}^1}$, $T$ small enough, $\Phi$ maps $S$ into $S$. 
Then from similar computations 
\begin{equation}\label{estimuniq}
\|\Phi(u)-\Phi(v)\|_{L^\infty_t L^2\cap L^p_tL^q}\lesssim 
T^{1-2/p}(\|u\|_{L^\infty_tH^1}+\|v\|_{L^\infty_t H^1})^{a-1}\|u-v\|_{L^p_t L^q}.
\end{equation}
Up to decreasing $T$, the usual fixed point argument gives the existence of a unique fixed point in $S$ 
for $T$ small enough. Estimate \eqref{estimuniq} also implies uniqueness in $L^\infty H^1$, and by causality 
the solution does not depend on the choice of the extension operator.\\
Thanks to the local well-posedness in $H^1$, the existence and uniqueness of a maximal solution follows.
\paragraph{Global existence} Let us go back to \eqref{estimduham}, assuming $a\geq 1+4/d$. 
Then $\displaystyle \frac{1}{p}=\frac{d(a-1)}{4(a+1)}$ and
\begin{eqnarray*}
\frac{1}{a-1}\bigg(1-\frac{2}{p}\bigg)-\frac{1}{p}=\frac{1}{a-1}\bigg(1-\frac{a+1}{p}\bigg) 
=\frac{1}{a-1}\bigg(1-\frac{d(a-1)}{4}\bigg)\leq 0,\\
\frac{1}{a}\bigg(1-\frac{1}{p}\bigg)-\frac{1}{p}=\frac{1}{a}\bigg(1-\frac{a+1}{p}\bigg)
=\frac{1}{a}\bigg( 1-\frac{d(a-1)}{4}\bigg)\leq 0.
\end{eqnarray*}
Therefore $L^{ap'}\cap L^{\frac{1}{a-1}\big(1-\frac{2}{p}\big)}\subset L^\infty\cap L^p$. 
As we work with small data, we can assume that the solution exists on $[0,T_0]$, $T_0\geq 1$,
and for any $T\geq T_0$, using 
$H^1\hookrightarrow L^q$
\begin{eqnarray*}
\||u|^{a-1}u\|_{L^{p'}_TW^{1,q'}}&\lesssim& \|u\|^a_{L^{ap'}_TL^{q}}
+\|u\|_{L^{\frac{1}{a-1}\big(1-\frac{2}{p}\big)}_TL^q}^{a-1}\|\nabla u\|_{L^p_TL^q}
\\
&\lesssim& \|u\|_{L^\infty_TH^1\cap L^pW^{1,q}}^a.
\end{eqnarray*}
The same computations can be applied to estimate time regularity, so that setting 
\\ 
$m(T)=\|u\|_{L^\infty_TH^1\cap L^p_TW^{1,q}\cap B^{1/2}_{p,2}([0,T],L^q)}$, we have with $C$ independent of 
$T\geq T_0$
\begin{equation*}
m(T)\leq C(\|u_0\|_{H^1}+\|g\|_{\mathcal{H}^1}+m(T)^a).
\end{equation*}
If $\|u_0\|_{H^1}+\|g\|_{\mathcal{H}^1}\leq \varepsilon$ small enough, then from the fixed point argument 
$m(1)\leq A\varepsilon$ for some $A>0$. Choosing $B> \max(A,C)$ and $\varepsilon$ small enough 
such that $C+CB^a\varepsilon^{a-1}<B$, for any 
$T\in [0,T_{\max}[$, $m(T)\leq B\varepsilon$ thus $T_{\max}=\infty$.
\end{proof}

\begin{rmq}
For the Schr\"odinger equation on $\R^d$, global well-posedness for small data is known provided 
$a_S<a$, where $a_S=(\sqrt{d^2+12d+4}+d+2)/(2d)<1+4/d$ is the so-called Strauss exponent, see \cite{Strauss}. 
Strichartz estimates for ``non admissible pairs'' (\cite{Cazenave}, section 2.4) are the missing tool for 
reaching this range.
\end{rmq}

\section{Asymptotic behaviour}\label{scattering}
The aim of this section is to show that the global small solution constructed in section  \ref{WPnonlin} scatters 
in the sense that it is asymptotically linear. For the Cauchy problem, the classical definition\footnote{up to some 
flexibility for the functional settings.} is 
\begin{equation*}
\exists\,\varphi\in H^1:\ \lim_{t\rightarrow +\infty}\|e^{-it\Delta} u(t)-\varphi\|_{H^1}=0.
\end{equation*}
We propose a natural extension for the Dirichlet boundary value problem: we define the operator 
$\Phi_g(t,u_0)=v(t,\cdot)$ where $v$ is the solution of
\begin{equation*}
 \left\{
\begin{array}{ll}
i\partial_tv+\Delta v=0,\\
v|_{t=0}=u_0,\\
v|_{x_d=0}=g.
 \end{array}
\right.
\end{equation*}
Note that if $g$ is defined for $t\in \R$, by reversibility of the boundary value problem with Dirichlet boundary conditions, 
$\Phi_g(t,u_0)$ is well defined for $t\in \R$.
By uniqueness of the solution, $\Phi_{g(t+\cdot)}(-t,\cdot)\circ \Phi_g(t,u_0)=u_0$, thus 
$u\to \Psi_g(t,u):=\Phi_{g(t+\cdot)}(-t,u)$ is the inverse of $\Phi_g(t,\cdot)$. This leads to the natural definition 
for scattering:
\begin{define}
 If $u$ is a global solution to \eqref{NLS}, we say that it scatters in $H^1$ if 
 \begin{equation*}
\lim_{t\rightarrow \infty}\Psi_g(t,u(t)) \text{ exists in }H^1.  
 \end{equation*}
\end{define}
\begin{rmq}
 Of course, it is equivalent to the more ``forward'' definition 
\begin{equation*}
\exists\,\varphi\in H^1:\ \lim_t \|\Phi_g(t,\varphi)-u(t)\|_{H^1}=0,
\end{equation*}
which has the advantage of making sense for non reversible BVP (but is not as easily checked).
\end{rmq}

\begin{prop}
The global solution constructed in section \ref{WPnonlin} scatters in $H^1$.
\end{prop}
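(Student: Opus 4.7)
The plan is to decompose $u$ into a linear piece that is annihilated by $\Psi_g(t,\cdot)$ and a Duhamel remainder of zero boundary trace, and then show that the resulting tail is Cauchy in $H^1$ by means of the global Strichartz bounds already at hand.

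Set $W(t):=u(t)-\Phi_g(t,u_0)$, so that $W$ satisfies the IBVP
\[
i\partial_tW+\Delta W=\varepsilon|u|^{a-1}u,\quad W|_{t=0}=0,\quad W|_{y=0}=0.
\]
Because $u(t)$ and $\Phi_g(t,u_0)$ share the same trace $g(t)$ on $y=0$, one has $W(t)\in H^1_0(\R^{d-1}\times\R^+)$ for all $t$. The reversible Dirichlet propagator $\Phi_0(t)$ is unitary on $L^2$ and on $H^1_0$, and the affine map $\Psi_g(t,\cdot)$, restricted to the affine hyperplane $\{w\in H^1:\ w|_{y=0}=g(t)\}$, has linear part $\Phi_0(-t,\cdot)$ acting on $H^1_0$. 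A direct verification then produces the clean identity
\[
\Psi_g(t,u(t))=u_0+\Phi_0(-t,W(t)),
\]
so the scattering assertion reduces to showing that $\Phi_0(-t,W(t))$ converges in $H^1$ as $t\to+\infty$.

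To obtain this via a Cauchy argument, for $t_2>t_1$ the group law and unitarity of $\Phi_0$ on $H^1_0$ yield
\[
\|\Phi_0(-t_1,W(t_1))-\Phi_0(-t_2,W(t_2))\|_{H^1}=\|W(t_2)-\Phi_0(t_2-t_1,W(t_1))\|_{H^1}.
\]
The function $v(t):=W(t)-\Phi_0(t-t_1,W(t_1))$ solves the Dirichlet IBVP on $[t_1,t_2]$ with zero initial data, zero boundary data, and source $F:=\varepsilon|u|^{a-1}u$, so the right-hand side above is $\|v(t_2)\|_{H^1}$. Theorem \ref{thtrivial} applied on the window $[t_1,t_2]$ with trivial initial and boundary data then controls it by
\[
\|F\|_{L^{p'}([t_1,t_2],W^{1,q'})\cap B^{1/2}_{p',2}([t_1,t_2],L^{q'})}
\]
for $(p,q)$ an admissible pair with $q=a+1$.

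It remains to check that the tail of this norm goes to zero. The global bound from section \ref{WPnonlin} gives $u\in L^\infty(\R^+,H^1)\cap L^p(\R^+,W^{1,q})\cap B^{1/2}_{p,2}(\R^+,L^q)$, and the H\"older and composition computations of that section together with proposition \ref{compoFrac} upgrade this to $F\in L^{p'}(\R^+,W^{1,q'})\cap B^{1/2}_{p',2}(\R^+,L^{q'})$ globally in time, so absolute continuity of the integral forces the displayed norm to vanish as $t_1\to\infty$. The sequence $\{\Psi_g(t,u(t))\}$ is therefore Cauchy in $H^1$, and scattering follows. The delicate point in the whole strategy is the algebraic identity $\Psi_g(t,u(t))=u_0+\Phi_0(-t,W(t))$: it hinges on reversibility of the linear Dirichlet BVP (see the discussion in section \ref{WPbackwards}) and on $W(t)$ belonging to $H^1_0$, allowing $\Phi_0(\pm t)$ to act as an $H^1$ isometry. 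Once that identity is set up, the rest is a routine consequence of the global Strichartz estimates already established.
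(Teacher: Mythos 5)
Your proof is correct and follows essentially the same route as the paper: both reduce the Cauchy property of $\Psi_g(t,u(t))$ in $H^1$ to the observation that the increment between two scattering states is the free Dirichlet evolution of a Duhamel term driven by $\varepsilon|u|^{a-1}u$ over a late time window, which the global $L^{p'}W^{1,q'}\cap B^{1/2}_{p',2}L^{q'}$ bound forces to vanish. Your extra identity $\Psi_g(t,u(t))=u_0+\Phi_0(-t,W(t))$ with $W(t)=u(t)-\Phi_g(t,u_0)\in H^1_0$ is a clean repackaging (implicit in the paper, where the difference $\Psi_g(t,u(t))-\Psi_g(s,u(s))$ is directly identified with $w(0)$ for a backward BVP), but the substance is unchanged.
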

\begin{proof}
It suffices to check that $\Psi_g(t,u(t))$ is a Cauchy sequence. We keep the same notation as in the previous 
section. For $t>s$,  we have
$\Psi_g(t,u(t))-\Psi_g(s,u(s))=w(0)$ where $w$ is the solution of 
\begin{equation*}
 \left\{
\begin{array}{ll}
i\partial_rw+\Delta w=0,\\
w|_{r=t}=u(t)-\Phi_{g(s+\cdot)}(t-s,u(s)),\\
w|_{y=0}=0.
 \end{array}
\right.
\end{equation*}
On the other hand, $u(t)-\Phi_{g(s+\cdot)}(t-s,u(s))$ is the value at time $t$ of the solution of
\begin{equation*}
 \left\{
\begin{array}{ll}
i\partial_rz+\Delta z=|u|^{a-1}u1_{r\geq s},\\
z|_{r=s}=u(s)-u(s)=0,\\
z|_{y=0}=0.
 \end{array}
\right.
\end{equation*}
We deduce $\|\Psi_g(t,u(t))-\Psi_g(s,u(s))\|_{H^1}\lesssim \||u|^{a-1}u\|_{L^{p'}([s,\infty[,W^{1,q'})\cap 
B^{1/2}_{p',2}([s,\infty[,L^{q'})}\rightarrow_s 0$, therefore by Cauchy's criterion $\Psi_g(t,u(t))$ converges in $H^1$.
\end{proof}
Due to the presence of boundary conditions, there is some ``room'' for other definitions of scattering. The purpose 
of the next proposition is to show that the asymptotic behaviour is actually trivial, in the sense that the 
solution converges to the restriction on $y\geq 0$ of $e^{it\Delta}\varphi$ for some $\varphi\in H^1(\R^d)$.
We denote $\Delta_D$ the Dirichlet laplacian.

\begin{prop}
There exists  $\varphi\in H^1_0$ such that $\|u(t)-e^{it\Delta_D}\varphi\|_{H^1}
\rightarrow_t0$. Equivalently, $u$ converges as $t\to \infty$ to the restriction on $y\geq 0$ of the  
solution of 
\begin{equation*}
\left\{
\begin{array}{ll}
 i\partial_tv+\Delta v=0,\\
 v|_{t=0}=A(\varphi),
\end{array}
\right. x\in \R^d
\end{equation*}
where $A(\varphi)$ is the antisymetric extension on $y\leq 0$ of $\varphi$.
\end{prop}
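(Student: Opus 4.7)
The necessary condition $u(t)|_{y=0}=g(t)\to 0$ in $H^{1/2}(\R^{d-1})$ follows from Corollary \ref{normeloc}(3) (tail decay of $g$ in $\mathcal{H}^1$) together with the trace embedding $\mathcal{H}^1\hookrightarrow C(\R_t,H^{1/2})$ of Proposition \ref{structureH} applied to translates $g(\cdot+T)$. I split $u=L+N$ where $L(t):=\Phi_g(t,u_0)$ is the linear BVP solution with initial datum $u_0$ and boundary datum $g$, and $N:=u-L$ satisfies $N(0)=0$, $N|_{y=0}=0$, $i\partial_tN+\Delta N=f:=\varepsilon|u|^{a-1}u$, so $N\in C_tH^1_0$. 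The problem reduces to showing that $N$ and $L$ each scatter in $H^1$ to a Dirichlet Cauchy evolution.

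For the nonlinear correction, Duhamel for the Dirichlet Laplacian gives $N(t)=-i\int_0^te^{i(t-s)\Delta_D}f(s)\,ds$. The dual Strichartz estimate for $\Delta_D$ on the half space, obtained from Theorem \ref{thtrivial} with trivial boundary datum together with unitarity of $e^{-it\Delta_D}$ on $L^2$, yields
\begin{equation*}
\Bigl\|\int_{t_1}^{t_2}e^{-is\Delta_D}f(s)\,ds\Bigr\|_{H^1}\lesssim \|f\|_{L^{p'}([t_1,t_2],W^{1,q'})\cap B^{1/2}_{p',2}([t_1,t_2],L^{q'})}.
\end{equation*}
The global existence theorem of Section \ref{WPnonlin} supplies a finite bound for $f$ in this space over $\R^+$, so the tail vanishes as $t_1\to\infty$, and the Cauchy criterion produces $\varphi_N:=-i\int_0^\infty e^{-is\Delta_D}f(s)\,ds\in H^1$. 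Each finite truncation equals $e^{-iT\Delta_D}N(T)\in H^1_0$, so $\varphi_N$ lies in the closed subspace $H^1_0$, and $\|N(t)-e^{it\Delta_D}\varphi_N\|_{H^1}=\bigl\|\int_t^\infty e^{i(t-s)\Delta_D}f(s)\,ds\bigr\|_{H^1}\to 0$.

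For the linear BVP part I argue by approximation with compactly supported boundary data. Fix cutoffs $\eta_n\in C_c^\infty([0,\infty))$ with $\eta_n(0)=1$, $\eta_n\equiv 1$ on $[0,n]$, $\text{supp}\,\eta_n\subset[0,n+1]$, and set $g_n:=\eta_ng$. Since $g_n(0)=g(0)=u_0|_{y=0}$ the compatibility condition holds, and $L_n:=\Phi_{g_n}(\cdot,u_0)$ is well defined. For $t\ge n+1$, $g_n(t)=0$ so $L_n(t)\in H^1_0$ and $L_n(t)=e^{it\Delta_D}\varphi_{L,n}$ with $\varphi_{L,n}:=e^{-i(n+1)\Delta_D}L_n(n+1)\in H^1_0$. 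The Bourgain-type identity $\|h\|_{\mathcal{H}^1}\sim\|e^{-it\Delta'}h\|_{\dot H^{3/4}(\R_t,L^2)\cap\dot H^{1/4}(\R_t,H^1)}$ from Corollary \ref{normeloc}, combined with the tail decay of $g$ and boundedness of smooth-cutoff multiplication on fractional Sobolev spaces, gives $\|g-g_n\|_{\mathcal{H}^1}\to 0$. The a priori estimate of Theorem \ref{thtrivial} then yields $\|L-L_n\|_{L^\infty(\R^+,H^1)}\to 0$, and analogously $\|\varphi_{L,n}-\varphi_{L,m}\|_{H^1}\lesssim\|g_n-g_m\|_{\mathcal{H}^1}\to 0$, so $\varphi_{L,n}\to\varphi_L$ in $H^1_0$. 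A triangle inequality at $t\ge n+1$ gives $\|L(t)-e^{it\Delta_D}\varphi_L\|_{H^1}\le\|L-L_n\|_{L^\infty H^1}+\|\varphi_{L,n}-\varphi_L\|_{H^1}$, whence $\|L(t)-e^{it\Delta_D}\varphi_L\|_{H^1}\to 0$.

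Setting $\varphi:=\varphi_L+\varphi_N\in H^1_0$ combines the two convergences into the claim. The main obstacle is the linear scattering of $L$: unlike $N$, the linear BVP has no intrinsic dispersive decay of its $H^1$ norm, so scattering of $L$ must be extracted indirectly from the tail decay of $g$ via the truncation-and-stability argument above; the delicate point is continuity of the cutoff multiplication $g\mapsto\eta_n g$ on $\mathcal{H}^1$, which uses the Bourgain-type description of $\mathcal{H}^1$ together with the standard mapping properties of smooth-cutoff multipliers on fractional Sobolev spaces.
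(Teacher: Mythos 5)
Your decomposition $u=L+N$ (linear BVP evolution plus nonlinear Dirichlet-Duhamel correction) is a genuinely different route from the paper's. The paper instead keeps $u$ intact and scatters the \emph{modified backward map} $\widetilde\Psi_g(t,u(t))=\Phi_{\mathcal{P}_t(g)}(-t,u(t))$, where $\mathcal{P}_t$ is the uniformly bounded extension operator of Corollary~\ref{normeloc}(2) applied to the tail $g|_{[t,\infty)}$; this single Cauchy-sequence argument handles both the tail of the nonlinearity and the tail of the boundary data at once, and then a second comparison identifies the limit with a Dirichlet flow. Your treatment of $N$ via the Duhamel integral $N(t)=-i\int_0^t e^{i(t-s)\Delta_D}f(s)\,ds$ and the dual Strichartz bound from Theorem~\ref{thtrivial} with trivial data is sound.

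However, there is a genuine gap in the treatment of $L$. You reduce $\|g-g_n\|_{\mathcal{H}^1(\R^+)}\to0$ to ``boundedness of smooth-cutoff multiplication on fractional Sobolev spaces,'' but this is not a standard fact in the present setting and, as stated, is likely false. The Bourgain-type description places $e^{-it\Delta'}g$ in the \emph{homogeneous} intersection $\dot H^{3/4}_tL^2\cap\dot H^{1/4}_tH^1$; equivalently, the weight $\sqrt{||\xi|^2+\delta|}$ in the $\mathcal{H}^1$ norm vanishes on the characteristic paraboloid $\delta=-|\xi|^2$. Consequently $\mathcal{H}^1$ does not embed into $L^2_tH^{-N}$ for any $N$, while the standard estimate for cutoff multiplication on $\dot H^s$, obtained from $\eta(t)a(t)-\eta(\sigma)a(\sigma)=\eta(t)\,(a(t)-a(\sigma))+(\eta(t)-\eta(\sigma))\,a(\sigma)$, produces a cross term of order $\|\eta\|_\infty^{1-s}\|\eta'\|_\infty^{s}\|a\|_{L^2}$ that you have no way to absorb. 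Thus $g\mapsto\eta_n g$ is not known to be (and is not obviously) uniformly bounded on $\mathcal{H}^1(\R^+)$, and your $\|g-g_n\|_{\mathcal{H}^1}\to0$ does not follow. The fix is to discard the cutoffs and instead set $g_n:=g-\bigl(\mathcal{T}_1(g|_{[n,\infty)})\bigr)|_{\R^+}$ with $\mathcal{T}_1$ the extension operator of Corollary~\ref{normeloc}(2): then $g_n$ agrees with $g$ on $[0,n-1]$ (so compatibility is preserved for $n\ge2$), vanishes for $t\ge n$, and $\|g-g_n\|_{\mathcal{H}^1(\R^+)}\lesssim\|g\|_{\mathcal{H}^1([n,\infty))}\to0$ by Corollary~\ref{normeloc}(3). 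With this replacement the rest of your $L$-argument goes through. This is essentially the same device the paper uses through its $\mathcal{P}_t$.
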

\begin{proof}
Let us fix $\mathcal{R}$ a lifting operator $H^{1/2}(\R^{d-1})\rightarrow H^1(\R^{d-1}\times 
\R^+)$, $\mathcal{P}$ an extension operator $\mathcal{H}^1(\R^+_t)\rightarrow 
\mathcal{H}^1(\R_t)$. We define 
$\mathcal{P}_t: \mathcal{H}^1([t,\infty[\times \R^{d-1})\rightarrow 
\mathcal{H}^1(\R\times \R^d),\ g\rightarrow \mathcal{P}(g(\cdot+t))(\cdot-t)$.
We define now a modified backward operator :
\begin{equation*}
\widetilde{\Psi}_g(t,u(t)):=\Phi_{\mathcal{P}_t(g)}(-t,u(t)). 
\end{equation*}
For $t>s$, $\widetilde{\Psi_g}(t,u(t))-\widetilde{\Psi_g}(s,u(s))=w(0)$ where $w$ is the solution of 
\begin{equation*}
 \left\{
\begin{array}{ll}
i\partial_\tau w+\Delta w=0,\\
w|_{\tau=t}=u(t)-\Phi_{g(s+\cdot)}(t-s,u(s)),\\
w|_{y=0}=\mathcal{P}_tg-\mathcal{P}_sg.
 \end{array}
\right.
\end{equation*}
We already know (see the previous proof) that $$\|u(t)-\Phi_{g(s+\cdot)}(t-s,u(s))\|_{H^1}\rightarrow_s 0,$$
moreover $\displaystyle \lim_{t\to \infty}\|g\|_{\mathcal{H}^1([t,\infty[)}=0$ (corollary \ref{normeloc} point 3), 
thus $\|\mathcal{P}_t(g)\|_{\mathcal{H}^1}\rightarrow_\infty  0$. We deduce 
$\|\widetilde{\Psi_g}(t,u(t))-\widetilde{\Psi_g}(s,u(s))\|_{H^1}\rightarrow_{s,t}0$, thus from Cauchy's criterion 
$\widetilde{\Psi_g}(t,u(t))$ converges in $H^1$, we set $\lim_t\widetilde{\Psi_g}(t,u(t))=\varphi$.
We remark now that  $\Phi_{\mathcal{P}_tg}(\tau,u(t))-e^{i\tau\Delta_D}(u(t)-\mathcal{R}g(t))$ 
is the solution of 
\begin{equation*}
 \left\{
\begin{array}{ll}
i\partial_\tau w+\Delta w=0,\\
w|_{\tau=0}=\mathcal{R}g(t),\\
w|_{y=0}=\mathcal{P}_tg.
 \end{array}
\right.
\end{equation*}
Since $\|g\|_{\mathcal{H}^1([t,\infty[)}\to_\infty 0$, and from the embedding $\mathcal{H}^1\hookrightarrow 
C([t,\infty|,H^{1/2})$, we have $\mathcal{R}g(t)\rightarrow_t0$, this implies 
$\displaystyle \lim_{t\to \infty}\|\Phi_{\mathcal{P}_tg}(\tau,u(t))-e^{i\tau\Delta_D}(u(t)
-\mathcal{R}g)\|_{L^\infty_\tau H^1}\rightarrow 0$. In particular for $\tau=-t$
\begin{equation*}
\lim_{t\to \infty} \|\widetilde{\Psi}_{g}(t,u(t))-e^{-it\Delta_D}(u(t)-\mathcal{R}g(t))\|_{H^1}=0. 
\end{equation*}
As $\widetilde{\Psi}_g(t,u(t))\rightarrow_t \varphi\in H^1$, we deduce
$e^{-it\Delta_D}(u(t)-\mathcal{R}g(t)))\rightarrow_t\varphi$ too.  Furthermore for any $t$, 
$u(t)-\mathcal{R}g(t)\in H^1_0$ which is closed so $\varphi \in H^1_0$, and $e^{it\Delta_D}\varphi\in H^1_0$.
Finally from  $\|\mathcal{R}g(t)\|_{H^1}\rightarrow_t0$ we conclude 
$\|u(t)-e^{it\Delta_D}\varphi\|_{H^1}\rightarrow 0$. 
\end{proof}

\appendix 
\section{Remarks on the optimality of \texorpdfstring{$\mathcal{H}$}{H}}
A natural question is wether $\mathcal{H}$ is the weakest space for which the solution to \eqref{IBVP} is $C_tL^2$.
We consider the BVP
\begin{equation*}
\left\{
\begin{array}{ll}
 i\partial_tu+\Delta u=0,\\
\displaystyle \lim_{-\infty}u(t)=0,\\
 u|_{y=0}=g.
\end{array}
\right.
\end{equation*}
We formulate our problem as follows
\begin{equation}\label{questopt}
\text{Is there a weight }p>0\text{ such that }\|u\|_{C_tL^2}\lesssim 
\big(\int |\widehat{g}|^2p(\xi,\delta)d\delta d\xi\big)^{1/2}
\text{ and }\inf \dfrac{p(\xi,\delta)}{\sqrt{||\xi|^2+\delta|}}=0\ ?
\end{equation}
The aim of this section is to show that the answer to this question is positive, 
even under the stronger assumptions that $p\leq \sqrt{||\xi|^2+\delta|}$ and for any $\lambda>0$, 
$p(\lambda \xi,\lambda^2\delta)=\lambda p(\xi,\delta)$.
However we will see that region where the inf is realized is a bit peculiar.

We recall that the solution is given by $\mathcal{L}u=e^{-y\sqrt{|\xi|^2+\delta}}\widehat{g}$, 
and that we can split $u$ as 
\begin{eqnarray*}
u(x,y,t)
&=& \frac{1}{(2\pi)^{d}}\int_{\R^{d-1}}\int_{0}^\infty e^{i(y\eta+x\cdot\xi)}e^{-it(|\xi|^2+\eta^2)}
2\eta \widehat{g}(\xi,-\eta^2-|\xi|^2)d\eta\, d\xi\\
&&+\frac{1}{(2\pi)^{d}}\int_{\R^{d-1}}\int_{0}^\infty e^{-y\eta+ix\cdot\xi}e^{it(-|\xi|^2+\eta^2)}
2\eta\widehat{g}(\xi,-|\xi|^2+\eta^2)d\eta\, d\xi\\
&=& u_1+u_2.
\end{eqnarray*}
This splits the frequencies in two regions $\{\delta<-|\xi|^2\}:=\mathcal{R}_h$ and
$\{\delta >-|\xi|^2\}:=\mathcal{R}_e$. In the usual terminology of boundary value problems these are the hyperbolic
and elliptic regions (see \cite{Szeftel1} in the context of the Schr\"odinger equation).
According to Plancherel's formula, 
\begin{equation*}
 \|u_1(t=0)\|_{L^2}\sim \|\eta \widehat{g}(\xi,-\eta^2-\xi^2)\|_{L^2_{\xi,\eta}}\sim 
 \|\widehat{g}(\xi,\delta)\,||\xi|^2+\delta|^{1/4}\|_{L^2},
\end{equation*}
therefore the weight $\sqrt{||\xi|^2+\delta|}$ can not be modified in $\mathcal{R}_h$. \\
In $\mathcal{R}_e$, 
we set $J(\xi,\eta)=\sqrt{\eta/p(\xi,-|\xi|^2+\eta^2)}$, 
$\varphi(\xi,\eta)=2\widehat{g}(\xi,-|\xi|^2+|\eta|^2)/J(\xi,\eta)$. We remark that 
\eqref{questopt} is equivalent to $\sup J=+\infty$, moreover
\begin{equation*}
\int \varphi^2(\xi,-|\xi|^2+\eta^2)d\eta =\int_{\R^{d-1}}\int_{-|\xi|^2}^\infty|g(\xi,\delta)|^2p(\xi,\delta)
d\delta ,
\end{equation*}
so that $\widehat{g}$ is bounded in $L^2(pd\delta d\xi)$ if and only if $\varphi$ is in 
$L^2(d\eta d\xi)$.
Now without loss of generality we can assume that for any $\,(\xi,\eta),\ \varphi(\xi,\eta)\in \R^+$, and we bound 
\begin{eqnarray}\nonumber
\|u_2(\cdot,t)\|_{L^2_{x,y}}&\leq& \bigg\|\int_{0}^\infty e^{-y\eta}
2\eta\widehat{g}(\xi,-|\xi|^2+\eta^2)d\eta \bigg\|_{L^2_{\xi,y}}\\
\nonumber &=& \bigg\|\bigg(\int_{[0,\infty[^3} e^{-y(\eta_1+\eta_2)}\varphi(\xi,\eta_1)
\varphi(\xi,\eta_2)J(\xi,\eta_1)J(\xi,\eta_2)
d\eta_1 d\eta_2dy \bigg)^{1/2}\bigg\|_{L^2_\xi}\\
\label{hardytordu}&=&\bigg(\int_{\R^{d-1}}\int_{[0,\infty[^2}\frac{J(\xi,\eta_1)J(\xi,\eta_2)}{\eta_1+\eta_2}
\varphi(\xi,\eta_1)\varphi(\xi,\eta_2)
d\eta_1 d\eta_2 d\xi\bigg)^{1/2}
\end{eqnarray}
Using the decomposition $(\R^+)^2=\{\eta_1<\eta_2\}\cup \{\eta_2<\eta_1\}$,
we see that \eqref{hardytordu} is bounded by $\|\varphi\|_{L^2}^2$ if 
\begin{equation*}
T:\varphi\mapsto \frac{J(\xi,\eta_1)}{\eta_1}\int_0^{\eta_1}J(\xi,\eta_2)\varphi(\xi,\eta_2)d\eta_2
\text{ is bounded } L^2\to L^2.
\end{equation*}
Due to scaling invariances, it seems natural to add some homogeneity assumptions: if $u$ 
is a solution of the BVP 
with boundary data $g$, then $\lambda^{d/2}u(\lambda x,\lambda y,\lambda^2 t)$ is a solution with boundary 
data $g(\lambda x,\lambda^2t)$ and same $C_tL^2$ norm. The norm of the boundary data is scale invariant if 
\begin{equation*}
\int |\widehat{g}(\xi,\delta)|^2\frac{p(\lambda \xi,\lambda^2\delta)}{\lambda}d\xi d\delta=
\int |\widehat{g}(\xi,\delta)|^2p(\xi,\delta)d\xi d\delta,
\end{equation*}
which is true provided $p$ is anisotropically homogeneous: $p(\lambda\xi,\lambda^2\delta)=\lambda p(\xi,\delta)$. 
This is equivalent to the 0-homogeneity of 
$J(\xi,\eta)$. Somewhat surprisingly, even with these strong assumptions it is possible to construct $J$ satisfying 
\eqref{questopt}. 
\begin{prop}
There exists $p(\xi,\delta)$ such that \eqref{questopt} is true, moreover we can choose $p$ such that 
\begin{equation*}
\forall\,(\lambda,\xi,\delta)\in \R^{+*}\times \R^{d-1}\times \R,\ 
p(\lambda \xi,\lambda^2\delta)=\lambda p(\xi,\delta), \text{ and } p(\xi,\delta)\leq \sqrt{||\xi|^2+\delta|}.
\end{equation*}
\end{prop}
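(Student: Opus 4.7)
The plan is to leave $p=\sqrt{||\xi|^2+\delta|}$ unchanged on the hyperbolic region $\mathcal{R}_h=\{\delta<-|\xi|^2\}$, where the Plancherel identity for $u_1$ preceding the proposition forbids any smaller weight, and to modify $p$ only on the elliptic region $\mathcal{R}_e=\{\delta>-|\xi|^2\}$. Following the reduction sketched just before the statement, it is then enough to produce $J(\xi,\eta)\ge 1$, anisotropically $0$-homogeneous on $\R^{d-1}\times(0,\infty)$, with $\sup J=+\infty$, such that
\[
T\varphi(\xi,\eta_1):=\frac{J(\xi,\eta_1)}{\eta_1}\int_0^{\eta_1} J(\xi,\eta_2)\,\varphi(\xi,\eta_2)\,d\eta_2
\]
is bounded on $L^2(\R^{d-1}\times(0,\infty))$. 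I would look for $J$ of the form $J(\xi,\eta)=j(|\xi|/\eta)$, with $j:(0,\infty)\to[1,\infty)$; this form automatically encodes the $0$-homogeneity and the bound $p\le\sqrt{||\xi|^2+\delta|}$.

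Since $T$ is fiberwise in $\xi$, its $L^2$-boundedness reduces to a uniform-in-$\xi$ bound on the one-dimensional operator obtained by freezing $\xi$; and the scaling $\eta\mapsto|\xi|\eta$ identifies this fiber (unitarily on $L^2(d\eta)$) with the $\xi$-independent operator
\[
Kf(\eta_1)=\frac{\widetilde\jmath(\eta_1)}{\eta_1}\int_0^{\eta_1}\widetilde\jmath(\eta_2)\,f(\eta_2)\,d\eta_2,\qquad \widetilde\jmath(\eta):=j(1/\eta),
\]
on $L^2(0,\infty)$. Factoring $K=M_{\widetilde\jmath}\circ H\circ M_{\widetilde\jmath}$ where $H$ denotes the Hardy averaging operator and $M_{\widetilde\jmath}$ multiplication by $\widetilde\jmath$, the classical two-weight Hardy inequality (Muckenhoupt--Bradley), applied with the pair of weights $u=\widetilde\jmath$ and $v=1/\widetilde\jmath$, shows that $K$ is $L^2$-bounded if and only if
\[
\sup_{A>0}\Bigl(\int_A^{\infty}\frac{\widetilde\jmath(s)^{2}}{s^{2}}\,ds\Bigr)^{1/2}\Bigl(\int_0^{A}\widetilde\jmath(s)^{2}\,ds\Bigr)^{1/2}<\infty.
\]

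It remains to exhibit an unbounded $\widetilde\jmath\ge 1$ satisfying this condition. I would take
\[
\widetilde\jmath(s):=\max\!\Bigl(1,\ \sum_{n\ge 1} n\,\chi_{I_n}(s)\Bigr),\qquad I_n:=\bigl[n,n+n^{-4}\bigr],
\]
for which a direct computation, using the convergence of $\sum n^{-2}$ and $\sum n^{-4}$, gives
\[
\int_0^{A}\widetilde\jmath(s)^{2}\,ds\le A+\sum_{n\ge 1}n^{-2},\qquad \int_A^{\infty}\frac{\widetilde\jmath(s)^{2}}{s^{2}}\,ds\le \frac{1}{A}+\sum_{n\ge 1}n^{-4},
\]
whose product is uniformly bounded in $A$. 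Defining $p:=\sqrt{||\xi|^2+\delta|}$ on $\mathcal{R}_h$ and $p(\xi,\delta):=\sqrt{\delta+|\xi|^{2}}\,\big/\,\widetilde\jmath\bigl(\sqrt{\delta+|\xi|^{2}}/|\xi|\bigr)^{2}$ on $\mathcal{R}_e$, one then obtains an anisotropically $1$-homogeneous weight with $p\le \sqrt{||\xi|^2+\delta|}$ and $\inf p/\sqrt{||\xi|^2+\delta|}=1/\sup\widetilde\jmath^{2}=0$; the infimum is however realized only along the thin cones $\{\sqrt{\delta+|\xi|^{2}}/|\xi|\in\bigcup_n I_n\}$, which matches the ``peculiar'' region announced just before the proposition.

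The main technical point is really the choice of $\widetilde\jmath$: one needs it unbounded but $\ge 1$ everywhere, with bumps of total $L^2$-mass and of integral of $\widetilde\jmath^{2}/s^2$ both convergent, and it is the sharp two-weight Hardy criterion that turns this into a single, explicit verifiable condition. Once the fiberwise reduction and the Muckenhoupt--Bradley criterion are in place, the concrete construction above (or any variant with bumps of amplitude $h_n\to\infty$ on intervals of measure decaying faster than $h_n^{-2}$) produces the desired weight, and the global bound $\|u\|_{C_tL^2}\lesssim\|\widehat g\|_{L^2(p\,d\delta d\xi)}$ is obtained by combining the exact Plancherel control of $u_1$ with the boundedness of $T$ for $u_2$.
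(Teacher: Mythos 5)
Your reduction is sound, and the Muckenhoupt--Bradley formulation of the two-weight Hardy criterion is a cleaner packaging of the argument than the paper's (the paper writes $J=1+r$, expands $J\otimes J$ into four terms, and estimates each dyadically by hand; your fiberwise rescaling plus the sharp Hardy criterion replaces this with a single scalar condition on $\widetilde\jmath$). The construction of $\widetilde\jmath$ with bumps on $I_n=[n,n+n^{-4}]$ of height $n$ is also workable. However, the final verification has a genuine gap: the product of the two crude bounds you display,
\begin{equation*}
\Bigl(A+\textstyle\sum n^{-2}\Bigr)\Bigl(\tfrac1A+\textstyle\sum n^{-4}\Bigr)=1+\Bigl(\textstyle\sum n^{-4}\Bigr)A+\Bigl(\textstyle\sum n^{-2}\Bigr)\tfrac1A+\text{const},
\end{equation*}
is \emph{not} uniformly bounded in $A$ --- it blows up both as $A\to 0$ and as $A\to\infty$. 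The constant $\sum n^{-4}$ alone does not help when multiplied by the $A$ from the first factor. The Muckenhoupt condition does hold for your $\widetilde\jmath$, but only because the bump mass is localised, and you must use that: for $A\leq 1$ the bumps have not yet started, so $\int_0^A\widetilde\jmath^2\,ds=A$ exactly and the product is $\leq 1+(\sum n^{-4})A$, bounded; for $A\geq 1$ one must keep track that $\int_A^\infty\widetilde\jmath^2/s^2\,ds\leq A^{-1}+\sum_{n\gtrsim A}n^{-4}\lesssim A^{-1}$, so the product is $\lesssim A^{-1}(A+C)\lesssim 1$. Without separating these regimes (or, equivalently, noting that the tail sums decay in $A$ rather than being absolute constants), the verification is incorrect as written. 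A secondary, purely cosmetic point: the weight pair in your invocation of Muckenhoupt should read $u(s)=\widetilde\jmath(s)^2/s^2$, $v(s)=\widetilde\jmath(s)^{-2}$ rather than $u=\widetilde\jmath$, $v=1/\widetilde\jmath$; the displayed criterion that follows is nonetheless the correct one for that pair.
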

\begin{proof}
We keep the notations of the discussion above.
For simplicity, we assume $d=2$, and define :
\begin{equation*}
r(\xi,\eta)=
\left\{
\begin{array}{ll}
j\text{ if }2^j-2^{-j}\leq \frac{\eta}{\xi}\leq 2^j,\ j\in \N,\\
0\text{ else}.
\end{array}
\right.
 \end{equation*}
Obviously, $J:=1+r\geq 1$ is $0$-homogeneous and unbounded, thus 
$$p=\sqrt{\delta+\xi^2}/J^2(\xi,\sqrt{\delta+\xi^2})\leq \sqrt{\delta+\xi^2},\ \inf p=0\text{ and }
p(\lambda\xi,\lambda^2\delta) =\lambda p(\xi,\delta).$$
Developping in \eqref{hardytordu} 
$J(\xi,\eta_1)J(\xi,\eta_2)=1+r(\xi,\eta_1)+r(\xi,\eta_2)+r(\xi,\eta_1)r(\xi,\eta_2)$
it suffices to estimate each term separately. By symmetry, we can simply consider the integral 
over $\eta_1\geq \eta_2$. The term with $1$ is estimated thanks to Hardy's inequality, for the 
term with $r(\xi,\eta_2)$ we write
\begin{eqnarray*}
\int_0^\infty \frac{1}{\eta_1^2}\bigg(\int_0^{\eta_1}r(\xi,\eta_2)\varphi(\xi,\eta_2)d\eta_2\bigg)^2d\eta_1
 &\lesssim& \sum_{k=0}^\infty \int_{\xi 2^{k-1}}^{\xi 2^k}\frac{1}{\eta_1^2}
\bigg(\int_0^{2^k\xi}r\varphi(\xi,\eta_2)d\eta_2\bigg)^2d\eta_1\\
&\lesssim& \sum_{k=0}^\infty \frac{2^{-k}}{\xi}\bigg(\sum_{j=0}^k\int_{\xi(2^j-2^{-j})}^{\xi 2^j}j\varphi d\eta_2\bigg)^2
\\
&\lesssim&\sum_{k=0}^\infty \frac{2^{-k}}{\xi}\bigg(\sum_{j=0}^k\|\varphi(\xi,\cdot)\|_{L^2([\xi (2^j-2^{-j}),\xi 2^j])}
j\sqrt{2^{-j}\xi}\bigg)^2\\
&\lesssim &  \|\|\varphi(\xi,\cdot)\|_{L^2([\xi (2^j-2^{-j}),\xi 2^j])}\|_{l^2_j} \leq\|\varphi(\xi,\cdot)\|_{L^2_\eta}^2.
\end{eqnarray*}
Similarly for the term with $r(\xi,\eta_1)$
\begin{eqnarray*}
\int_0^\infty \frac{r^2(\xi,\eta_1)}{\eta_1^2}\bigg(\int_0^{\eta_1}\varphi(\xi,\eta_2)d\eta_2\bigg)^2d\eta_1
 &\lesssim& \sum_{k=0}^\infty \int_{\xi (2^{k}-2^{-k})}^{\xi 2^k}\frac{k^2}{\eta_1^2} 
\bigg(\int_0^{2^k\xi}\varphi(\xi,\eta_2)d\eta_2\bigg)^2d\eta_1\\
&\lesssim& \sum_{k=0}^\infty \frac{k^2 2^{-3k}}{\xi}\|\varphi(\xi,\cdot)\|_{L^2}^22^k\xi
\lesssim\|\varphi(\xi,\cdot)\|_{L^2_\eta}^2.
\end{eqnarray*}
The last term $r(\xi,\eta_1)r(\xi,\eta_2)$ is easier to estimate, we conclude by integration in $\xi$
\begin{equation*}
\int_{\R}\int_{(\R^+)^2} \frac{J(\xi,\eta_1)J(\xi,\eta_2)}{\eta_1+\eta_2}\varphi(\xi,\eta_1)
\varphi(\xi,\eta_2)d\eta_2 d\eta_1 d\xi \lesssim \|\varphi\|_{L^2(\R^{d-1}\times \R^+)}^2,
\end{equation*}
despite the fact that $J$ is larger than $1$ and unbounded.
\end{proof}
\begin{rmq}
Let us point out that the contribution of the elliptic region $\mathcal{R}_e$ to the solution corresponds 
to a superposition of so-called evanescent waves, that do not propagate like solutions of the Cauchy problem: 
for $(\delta,\xi)$ such that $\delta+|\xi|^2>0$, the wave $e^{-y\sqrt{||\xi|^2+\delta|}}e^{i(\delta t+x\cdot \xi)}$ is 
a solution of the Schr\"odinger equation on $\R^{d-1}\times \R^+$ remaining localized near the boundary. \\
As mentionned before, for frequencies that correspond to propagating waves, the weight $\sqrt{\delta+|\xi|^2}$ 
is optimal.
\end{rmq}
\paragraph{Acknowledgement} C.A. was  partially supported by the french ANR 
project BoND ANR-13-BS01-0009-01.

\bibliography{biblio}
 \bibliographystyle{plain}

\end{document}